\definecolor{darkgreen}{rgb}{0,0.55,0}
\newcommand{\blue}[1]{{\textcolor{blue}{#1}}}
\newcommand{\grad}{\nabla}
\newcommand{\laplace}{\Delta}
\renewcommand{\div}{\grad\cdot}
\newcommand{\curl}{\grad\times}
\newcommand{\M}{\mathcal{M}}
\newcommand{\R}{\mathbbm{R}}
\newcommand{\Z}{\mathbbm{Z}}
\newcommand{\g}{\mbox{\sl g}}
\newcommand{\s}{\mbox{\sl s}}
\newcommand{\T}{\mathcal{T}}
\DeclareMathOperator{\dist}{dist}
\DeclareMathOperator{\gradient}{grad}
\DeclareMathOperator{\diff}{d}     %command changed; can change back if we like.
\newcommand{\K}{\ensuremath{\mathcal{K}}}
\newcommand{\Ha}{\ensuremath{\mathcal{H}}}
\newcommand{\mres}{%
  \,\raisebox{-.127ex}{\reflectbox{\rotatebox[origin=br]{-90}{$\lnot$}}}\,%
}
\newcommand{\G}{\ensuremath{\mathcal{G}}}
\DeclareMathOperator{\exc}{Exc}
\newcommand{\pe}{p^{\eps}}
\newcommand{\ue}{u^{\eps}}
\newcommand{\ve}{v^{\eps}}
\newcommand{\re}{\rho^{\eps}}
\newcommand{\Li}{L^{\infty}}
\newcommand{\ep}{\varepsilon}
\newcommand{\calH}{\Ha}
\newcommand{\eps}{\varepsilon}
\newcommand{\I}{\mathbbm{1}}
\newtheorem{prop}{Proposition}
\newtheorem{theorem}{Theorem}
\newtheorem{lemma}{Lemma}
\newtheorem{remark}{Remark}
\newtheorem{definition}{Definition}
\newcommand{\tacka}{\, \cdot\,}
\begin{document}

\title{On the vortex filament conjecture for Euler flows}

\author{
Robert L. Jerrard\thanks{
Department of Mathematics, University of Toronto, Toronto, Ontario M5S 2E4, Canada
} \and Christian Seis\thanks{
Institut f\"ur Angewandte Mathematik, Universit\"at Bonn,  53115 Bonn, Germany
}}
\maketitle

\begin{abstract}
In this paper, we study the evolution of a vortex filament in an incompressible ideal fluid. Under the assumption that the vorticity is concentrated along a smooth curve in $\R^3$, we prove that the curve evolves to leading order by binormal curvature flow. Our approach combines new estimates on the distance of the corresponding Hamiltonian-Poisson structures with stability estimates recently developed in \cite{JerrardSmets15}.
\end{abstract}

\section{Introduction}

In this paper, we study the evolution of an incompressible ideal fluid described by the Euler equations.  We are interested in data such that the initial vorticity is concentrated in a tube of radius $\eps\ll1$ around a smooth curve in $\R^3$.
%In this paper we consider an inviscid, incompressible fluid,
%described by a velocity field 
%
%We are interested in data
%such that  the initial vorticity 
%$\omega_0 :=\nabla\times u_0$
%is concentrated in a tube of radius $\ep\ll 1$
%around a smooth curve $\Gamma_0$.
One might then ask,
\begin{itemize}
\item  at later times, does the vorticity continue to
concentrate around some curve, and
\item  if so, how does the curve evolve?
\end{itemize}
The second question is not so hard {\em  if} one
already has a sufficiently good answer to the first. Indeed, 
%it is well-known that if at some time 
%the vorticity concentrates {\em sufficiently nicely}
%around a smooth curve $\Gamma$, then  to leading order,
%and after a possible rescaling in time, the 
%instantaneous velocity of the curve 
the literature on this question, dating back to work of da Rios in 1906  \cite{DaRios06}, 
shows that if one somehow knows that at some time the vorticity concentrates smoothly
and  symmetrically in a
small tube around a smooth  curve, then one
can compute the instantaneous velocity of the curve to leading order. These computations suggest that the
curve should evolve, after a possible rescaling in time, by an equation described in Section \ref{ss.bn} below, known by various names, including the binormal curvature flow, the vortex filament equation, and the local induction approximation. 
%However, in practice it is probably very hard to verify that 

Our results have the same character: they provide information
about curve evolution, conditional upon knowing that vorticity remains concentrated
around some curve. Their new feature is that we show that
for these purposes, a quite weak description of the vorticity concentration suffices. %\footnote{\green{Isn't it that such results are obtained for the first time?? If yes, we should say it to advertise our result}}
Roughly speaking, we show that for suitable initial data,  as long as the vorticity remains 
concentrated  on the {\em same scale}\  $\ep \ll 1$ around some curve of the correct arclength,
where concentration is measured by a geometric variant of
a particular negative Sobolev norm, 
then in fact the curve evolves by the binormal curvature flow.
Our results also improve on earlier work in that they require very little {\em a priori}
smoothness of the curve around which the vorticity concentrates, and
they apply to very rough solutions of the Euler equations.

From another perspective, the relationship between the Euler 
equations and the binormal curvature flow can be formally understood
by a correspondence between Hamiltonian-Poisson structures
giving rise to the two flows. Our results may be seen as
giving quantitative estimates of a sort of distance between
these Hamiltonian-Poisson structures.

The belief that one can find solutions of the Euler equations for which
the vorticity remains close for a significant
period of time to a filament evolving by binormal curvature flow
may be called the ``vortex filament conjecture" for the Euler equations.
%(We provide a more precise formulation below, see Section .... )
We believe that our results provide more credible evidence
in favor of  the conjecture than any earlier arguments that we are aware of.

%As the binormal curvature flow preserves the length of the evolving curve (in fact, we will see in Section \ref{S.Hamiltonian} that the length functional is a Hamiltonian), it is clear that in passing to the scaling limit $\eps\ll1$, the 3D Euler equations lose one of their most salient features: the possibility of vortex stretching. In the present work, this effect is suppressed by the restrictive concentration assumption. It would be interesting to see whether a relaxation of this assumption allows for carrying vortex stretching effects over to a modified limiting equation, cf.\ \cite[Chapter 7.2]{MajdaBertozzi02}.

We conclude the introduction with a brief overview on preliminary and related works.
\begin{itemize}
\item {\em Formal asymptotics.} As mentioned earlier, the first derivation of the binormal curvature flow dates back to the work of da Rios in 1906 \cite{DaRios06}. 
In his doctoral thesis, the Italian mathematician formally computed the motion law of vortex filaments with the help of potential theory. At this time, da Rios' work was mostly ignored, except by his supervisor Levi-Civita, who promoted the results in a survey article \cite{Levi-Civita32a,Levi-Civita32b} many years later. In subsequent years, the local induction approximation was rediscovered several times, see \cite{Ricca91} and references therein, and it is by now a classical
topic in fluid dynamics.
Discussions that include alternative models arising from more refined formal asymptotics can
be found for example in the texts of Saffman \cite[Chapter 11]{Saffman92} or of Majda and Bertozzi \cite[chapter 7]{MajdaBertozzi02}.

\item {\em Rigid motion.} An explicit example for the motion of a vortex filament in an Euler fluid is the rigid motion of a perfect vortex ring, see e.g.\ \cite{FraenkelBerger74,AmbrosettiStruwe89}. Here, the evolution reduces to a translation with constant speed in direction  normal to the plane in which the ring is embedded. The possibility of non-trivial steady vortex configurations featuring knots and links was conjectured by Kelvin \cite{Kelvin1875}. Only very recently, such (infinite energy) solutions were found by Enciso and Peralta-Salas \cite{EncisoPeralta15,EncisoPeralta15b}. Explicit knotted solutions to the binormal curvature flow were studied in \cite{Kleener90}.
\item {\em Dimension reduction.}
To the best of our knowledge, the only rigorous result in favor of the vortex filament conjecture for Euler flows is restricted to flows with an axial symmetry. In \cite{BenedettoCagliotiMarchioro00}, the authors manage to show  that the (axially symmetric) vorticity remains sharply concentrated in a small tube which rigidly moves at a constant speed in the direction of the symmetry axis.
The analogous problem for two-dimensional fluids is much easier and is by now well-understood. In fact, if the vorticity is initially sharply concentrated around a number of points in $\R^2$ (or a subdomain), the 2D Euler dynamics are well described by the so-called point vortex model. For details, we refer the interested reader to Chapter 4 of the monograph \cite{MarchioroPulvirenti83} and the references therein.
\end{itemize}

The binormal curvature flow is also conjectured to arise as a description of dynamics of vortex filaments in certain quantum fluids,
as described by the Gross-Pitaevskii equation. This problem too is very largely open, although some conditional results, similar in spirit to the ones we prove here, are established in \cite{Jerrard02}. 

A higher-dimensional analog of the binormal curvature flow has been shown formally to describe the motion of
codimension 2 vortex submanifolds in ideal fluids in dimensions $n \ge 4$, first in the context of quantum fluids
\cite{Jerrard02}, and more recently for the Euler equations  \cite{Khesin12,Shashikanth12}.

This article is organized as follows. In Section \ref{ss.bn}, we introduce the Euler equations,  the binormal curvature flow, and the notation that is used in this paper. We subsequently present our rigorous results and discuss the method of this paper. In Section \ref{S.Hamiltonian}, we present a heuristic derivation of the binormal curvature flow from the Euler equations by formally passing to the limit in the corresponding Hamiltonian-Poisson structures.  The remaining Section \ref{S:Proofs} contain the proofs. %: After some preliminaries (Section \ref{S:Preliminaries}), in Section \ref{S:prototype} we study a prototype velocity field associated with arbitrary smooth vortex curves. Our main results are established in Section \ref{S:Proofs}.

\section{Mathematical setting and results}\label{ss.bn}

%%%%%%%%%%%%%%%%%%%%%%%%%%%%%%%%%%%%%%%%%%%%%%%%%%%%% Notation %%%%%%%%%%%%%%%%%%%%%%%%%%%%%%%%%%%%%%%%%%%%%%%%%%%%%%%%%%%

\subsection{Notation}

For notational convenience, throughout the manuscript we use the same notation for length, area, and volume. More precisely, $|\cdot|$ can stand for the one- or two-dimensional Hausdorff measure $\Ha^1$ or $\Ha^2$, respectively, or the Lebesgue measure $\mathcal{L}^3$ on $\R^3$. It should be clear from the context, which measure is actually used.

We will always write $\Gamma$ to denote a closed, oriented Lipschitz curve (normally smoother) in $\R^3$ of length $L$, and $\gamma:\R/L\Z\to \R^3$ 
an arclength parametrization of $\Gamma$. 
Thus $|\gamma'(s)|=1$ for all $s\in \R/L\Z$, 
\[
\Gamma = \left\{\gamma(s):\: s\in \R/L\Z\right\}\qquad\mbox{and }\ \gamma'(s) = \tau_\Gamma :=  \mbox{unit tangent to $\Gamma$ at $\gamma(s)$.}
\]
Depending on the context we may freely change between the notations $\partial_s\gamma $ and $\gamma'$ for the derivative of $\gamma$ with respect to the arc-length parameter.

For $s,t\in \R/L\Z$, we always understand $|s-t|$  to mean distance {\em modulo} $L\Z$
between $s$ and $t$; that is,
$|s-t| = \min_{k\in \Z}|s-t-Lk|$.

We remark that throughout most of the paper we will normalize by setting $L=1$.
%, and we will write 
%$S^1$ to denote $\R/\Z$.\footnote{\green{I think we never used the notation $S^1$. Do you prefer that over $\R/\Z$?}}

We will write 
\[
A \lesssim B
\]
to mean that there exists some constant $C$, 
independent of 
$\ep$ (as long as $0<\ep<\frac  L 2$), such that
$A \le C B$.   Similarly, $A = \mathcal{O}(B)$  if $|A|\lesssim B$. 
Except where explicitly noted otherwise, the implicit
constants are {\em absolute} in the sense that they are
independent of all parameters.
The implicit constants appearing in all such estimates
may change from line to line.

We will write $\mu_\Gamma$ to denote the vector-valued measure corresponding to integration over $\Gamma$,
defined by
\[
\int \phi \cdot d\mu_\Gamma 
=  \int_{\Gamma} \phi\cdot \tau_{\Gamma}\, d\Ha^{1}
=
\int_{\R/L\Z} \phi(\gamma(s))\cdot \gamma'(s) \, ds
\qquad\mbox{ for }\phi \in C_c(\R^3;\R^3).
\]
If $\mu$ is an absolutely continuous measure with density $\omega$, i.e., $d\mu = \omega dx$, we will occasionally identify $\omega $ with $\mu $.

Derivatives of measures are defined in the sense of distributions.

Given $\gamma$ and $\Gamma$ as above, 
for $s\in \R/L\Z$, we define the {\em security radius} of $\Gamma$ at $\gamma(s)$ by
\begin{equation}
r_\gamma(s) = r(s) := \sup\left\{ r>0 \ : 
\begin{array}{c}\ |\gamma(s+h) -\gamma(s)|\ge  \frac r2 \ \mbox{ for all }|h|\ge r, \ \mbox{ and } \\
|\gamma'(s+h)-\gamma'(s) | \le \frac {|h|}r   \ \mbox{ for all }|h|\le r. \end{array}\right\}.
\label{r.def}\end{equation}
We set $r(s)=0$ if $\gamma$ is not differentiable at $s$.
We will see in Lemma \ref{L9cis} below that within the tube of variable radius $r_{\gamma} /4$ around $\Gamma$, there is a well-defined orthogonal projection onto $\Gamma$.

We will also write (omitting subscripts when no confusion can result)
\begin{equation}\label{kappastar}
\kappa_\gamma^*(s) = \kappa^*(s) := 1/r(s).
\end{equation}
It is easy to check that 
\begin{equation}
\kappa^*(s) \ge \kappa(s) = |\gamma''(s)| \quad\mbox{ for all $s$,}
\label{ksgpp}\end{equation}
provided that the letter is defined. Note that all arclength parametrizations $\gamma$ are translates of one another, so 
that quantities such as norms of $\kappa^*$ depend only on the geometry of $\Gamma$,
not on the parametrization.  In particular, we will be interested in Lipschitz curves $\Gamma$ for which
\begin{equation}
\| \kappa^*_\Gamma \|_{L^{1,\infty}}  = \| \kappa^* \|_{L^{1,\infty}} := \sup_{\sigma>0}
\sigma \Big| \big\{ s\in \R/L\Z :  |\kappa^*_\gamma(s)|\ge \sigma
\big\}
\Big| < \infty.
\label{kstarfinite}\end{equation}
This is a weak regularity condition that allows corners (but not cusps) and a finite number of self-intersections.

We will also need the following.
Let $\omega$ be a vector-valued Radon measure on $\R^3$. We define the {\em homogeneous flat norm} of $\omega$ as
\begin{equation}
\|\omega\|_F := \sup\left\{ \int \xi\cdot d\omega:\: \xi \in C^1_c(\R^3;\R^3)\mbox{ with }\|\curl\xi\|_{L^{\infty}}\le1\right\}.
\label{def1}\end{equation}
If $\omega:\R^3\to\R^3$ is a locally integrable vector field, then we write $\|\omega\|_F= \|\mathcal{L}^3\mres \omega\|_F$, where $\mathcal{L}^3$ is the Lebesgue measure on $\R^3$.

%
%The condition
%\begin{equation}
%\| \omega - \mu_\Gamma\|_F \le C\ep
%\label{ec}\end{equation}
%arises as an assumption in many of our results, and can
%be understood to mean that ``$\omega$ is (weakly) concentrated on a scale $\ep$ around $\Gamma$."
%This interpretation is justified in Lemmas ..... .\footnote{\green{Do we have a lemma that explains that?}}

\subsection{Euler equations}

We consider the time-rescaled Euler equations
\begin{eqnarray}
k_{\eps}^{-1}\partial_t \ue + \ue\cdot \grad \ue  +\grad \pe &=&0,\label{1}\\
\div \ue&=& 0,\label{2}
\end{eqnarray}
where, as usual,
$\ue:[0,\infty)\times \R^3\to \R^3$ denotes the fluid velocity 
and $\pe:[0,\infty)\times \R^3\to \R$ is the pressure. Moreover, $k_{\eps}$ is a scaling factor that has to be specified later. The initial configuration is a divergence-free vector field $\ue_0$ on $\R^3$, that is
\[
\ue(0, \tacka)=\ue_0.
\]
To be more specific, we are interested in {\em weak solutions} of the Euler equation.

\begin{definition}\label{D:weak_Euler} We call $\ue\in L^\infty(\R; L^2( \R^3; \R^3))$ a {\em weak solution} of \eqref{1}, \eqref{2} if
$\nabla\cdot \ue = 0$ in the sense of distributions, and 
\begin{equation}
\int_0^{\infty}\int k_{\eps}^{-1} \partial_t \phi \cdot \ue  + \nabla \phi : (\ue\otimes \ue) \ dx\,dt + \int \phi(0,\tacka)\cdot \ue_0\, dx=0
\label{wk.sol}\end{equation}
for every test function $\phi\in C^\infty_c(\R\times \R^3; \R^3)$ such that $\nabla\cdot \phi = 0$. 
A weak solution is said to be {\em conservative} if 
\[
t\mapsto \int |\ue(t,x)|^2 dx \qquad \mbox{ is constant}.
\]
\end{definition}
Here and in the following, we understand undetermined integrals as integrals over the whole space.

It is well-known that in \eqref{wk.sol} the pressure can be reintroduced as a Lagrange multiplier for the divergence free condition on the test functions, and is then uniquely determined up to a function that depends on time $t$ only. The weak solutions $\ue$, however,  are not unique, see for example \cite{Scheffer93,Shnirelman97,Shnirelman00, DeLellisSzekelyhidi09, BdLIsSz}.
In particular, it is shown in \cite{DeLellisSzekelyhidi10} that energy conservation fails as criterion for uniqueness. On the positive side existence of weak solutions for any initial datum in $ L^2$ was established in \cite{Wiedemann11}. These solutions are non-conservative (in fact, the energy is even discontinuous). In \cite{SzekelyhidiWiedemann12}, the authors construct a dense subset of $L^2$ for which conservative solutions exist. 

From the definition of weak solutions in \eqref{wk.sol}, we immediately infer the following identity.

\begin{lemma}\label{L1}
Let $u\in L^2_{loc}(\R\times \R^3;\R^3)$ be a weak solution to the Euler equation, and
let  $\omega=\curl u$ the vorticity (measure). Then for every $\phi \in C_c^{\infty}(\R^3;\R^3)$
\begin{equation}\label{8}
\frac{d}{dt}\int \phi\cdot d \omega = k_{\eps}  \int \grad(\curl\phi):u\otimes u\, dx
\qquad\mbox{ distributionally in }(0,\infty).
\end{equation}
\end{lemma}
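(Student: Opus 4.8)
The plan is to obtain \eqref{8} directly from the definition of a weak solution by inserting a suitable test function into \eqref{wk.sol}. The preliminary observation is the identity
\[
\int \phi\cdot d\omega \;=\; \langle \curl u,\phi\rangle \;=\; \langle u, \curl\phi\rangle \;=\; \int \curl\phi\cdot u\,dx ,
\]
valid for every $\phi\in C_c^{\infty}(\R^3;\R^3)$; the middle equality is just the definition of the distributional curl (equivalently, integration by parts, with no boundary term because $\phi$ has compact support). Hence it suffices to identify the distributional time derivative of the function $t\mapsto \int \curl\phi\cdot u(t,\cdot)\,dx$.

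To that end, fix $\eta\in C_c^{\infty}((0,\infty))$ and use
\[
\Phi(t,x):=\eta(t)\,\curl\phi(x)
\]
as a test function in \eqref{wk.sol}. This is admissible: $\Phi\in C_c^{\infty}(\R\times\R^3;\R^3)$ since $\curl\phi\in C_c^{\infty}(\R^3;\R^3)$, and $\div\Phi=\eta(t)\,\div(\curl\phi)=0$ identically, so $\Phi$ satisfies the divergence-free constraint demanded of admissible test functions; moreover $\eta$ vanishes near $t=0$, so the initial term $\int\Phi(0,\tacka)\cdot u_0\,dx$ drops out. Since $u\in L^2_{loc}$ forces $u\otimes u\in L^1_{loc}$, all the integrals are finite. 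Substituting $\Phi$ into \eqref{wk.sol}, multiplying through by $k_{\eps}$, and rearranging yields
\[
-\int_0^{\infty}\eta'(t)\Big(\int \phi\cdot d\omega\Big)\,dt \;=\; \int_0^{\infty}\eta(t)\Big(k_{\eps}\int \grad(\curl\phi):u\otimes u\,dx\Big)\,dt ,
\]
where on the left I used the identity from the previous paragraph. As $\eta\in C_c^{\infty}((0,\infty))$ is arbitrary, this is precisely the assertion that $\frac{d}{dt}\int\phi\cdot d\omega=k_{\eps}\int\grad(\curl\phi):u\otimes u\,dx$ distributionally in $(0,\infty)$.

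There is no genuine obstacle here — as the phrasing in the statement suggests, the lemma is essentially a reformulation of the definition of weak solution. The only point deserving attention is that one must test \eqref{wk.sol} against $\curl\phi$ rather than against $\phi$ itself: this is exactly what produces the curl on the right-hand side of \eqref{8} and, simultaneously, makes the test field solenoidal so that \eqref{wk.sol} applies. Finally, although $\omega=\curl u$ need not be a Radon measure for a general $u\in L^2_{loc}$, the argument is unchanged if $\int\phi\cdot d\omega$ is read throughout as the distributional pairing $\langle\curl u,\phi\rangle$.
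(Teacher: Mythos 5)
Your proof is correct and takes essentially the same approach as the paper: you test the weak formulation \eqref{wk.sol} against $\Phi(t,x)=\eta(t)\,\curl\phi(x)$, which is exactly the test function the paper uses (with $f$ in place of $\eta$), and the identity $\int\phi\cdot d\omega=\int\curl\phi\cdot u\,dx$ then gives \eqref{8}. Your version just spells out the admissibility of $\Phi$ and the vanishing of the initial-data term, which the paper leaves implicit.
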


\begin{proof}
Fix $\phi \in C^\infty_c(\R^3;\R^3)$ and $f\in C^\infty_c(0,\infty)$. Since $u$ is a weak solution,
\[
\int  f'(t) \left(\int  (\nabla\times \phi) \cdot u \ dx \right) dt    =   -k_{\eps} \int f(t) 
\left(\int \nabla(\nabla\times \phi) : (u\otimes u)\, dx\right) dt ,
\]
and this is exactly \eqref{8}.
\end{proof}

\subsection{Binormal curvature flow}

A family of  smooth curves $\{\Gamma(t)\}_{t\in[0,T]} \in \R^3$ is said to evolve by binormal curvature flow (BCF) if
\begin{equation}\label{6a}
\partial_t \gamma = \gamma' \times \gamma'',
\end{equation}
where for each $t\in[0,T]$, $\gamma(t,\tacka):  \R/L\Z\to \R^3$ is an arc-length parametrization of $\Gamma(t)$, i.e., $\Gamma(t) = \left\{ \gamma(t,s):\: s\in \R/L\Z\right\}$ and $|\gamma'(t,s)|^2 = 1$ for all $s\in \R/L\Z$. Here, $L$ is the length of the curve, and we assume the curve to  be closed, so that $\R/L\Z$ is the interval of periodicity of $\gamma(t,\tacka)$. In case where $\Gamma(t)$ is a Frenet curve, then we can equivalently write \eqref{6a} as
\[
\partial_t \gamma = \kappa b,
\]
where $\kappa$ is the curvature and $b$ is the binormal vector along the curve. That arc-length parametrizations are indeed compatible with binormal curvature flows can be seen by computing
\[
\frac{\partial}{\partial t} |\gamma'|^2 = 2\gamma'\cdot\partial_t\gamma' \stackrel{\eqref{6a}}{=}0.
\]
Short-time existence of smooth solutions follows from classical arguments.

%
%
%
%
%
%It is convenient to describe the motion of the fluid in terms of the vorticity $\omega=\curl u$. This amounts to the equation
%\begin{eqnarray}
%\partial_t \omega + u\cdot \grad \omega &=& \omega \cdot \grad u,\label{4}\\
%\div u&=&0.\label{5}
%\end{eqnarray}
%For smooth solutions, both equations are equivalent as we can recover the fluid velocity from the vorticity via the Biot--Savart law
%\begin{equation}\label{6}
%u(x) =  \frac1{4\pi} \int \frac{y-x}{|y-x|^3}\times \omega(y) \ dy
%\end{equation}
%
%
%
%
%
%

There is a striking similarity between Lemma \ref{L1} for Euler solutions and the following formula for binormal curvature flows.

\begin{lemma}\label{L2} Let $\{\Gamma(t)\}_{t\in[0,T]}$ be a family of smooth curves evolving by binormal curvature flow. Then
\begin{equation}\label{7}
\frac{d}{dt} \int_{\Gamma} \phi\cdot \tau d\Ha^1 =  
%-\int_{\gamma} \grad\left(\curl \phi\right): \tau\otimes \tau
%=
\int_{\Gamma} \grad\left(\curl \phi\right): (I- \tau\otimes \tau)\, d\Ha^1,
\end{equation}
for all $\phi \in C_c^{\infty}(\R^3;\R^3)$. Here $\tau = \tau_{\Gamma}$ denotes the tangent along $\Gamma$.
\end{lemma}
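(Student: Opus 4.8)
The plan is to prove \eqref{7} by a direct computation starting from the binormal curvature flow equation \eqref{6a}, differentiating the left-hand side under the integral sign and then integrating by parts on the circle $\R/L\Z$. Writing $\gamma = \gamma(t,s)$ for an arclength parametrization, we have
\[
\frac{d}{dt}\int_\Gamma \phi\cdot\tau\, d\Ha^1 = \frac{d}{dt}\int_{\R/L\Z}\phi(\gamma)\cdot\gamma'\, ds
= \int_{\R/L\Z}\Big( \grad\phi(\gamma)\,\partial_t\gamma\cdot\gamma' + \phi(\gamma)\cdot\partial_t\gamma'\Big)\, ds,
\]
where I use that $s$ ranges over a fixed period interval so the domain of integration does not move. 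Now substitute $\partial_t\gamma = \gamma'\times\gamma''$ from \eqref{6a}, so that $\partial_t\gamma' = \partial_s(\gamma'\times\gamma'') = \gamma'\times\gamma'''$ (the $\gamma''\times\gamma''$ term vanishing). In the first term, $\grad\phi(\gamma)\,(\gamma'\times\gamma'')\cdot\gamma'$ is a bilinear expression in $\grad\phi$ evaluated on the pair $(\gamma'\times\gamma'',\gamma')$; in the second, integrate by parts in $s$ to move the outermost derivative off $\gamma'''$, producing terms with $\grad\phi$ contracted against $\gamma'$ and $\gamma'\times\gamma''$, plus a $\grad^2\phi$ term. The second-derivative term should be seen to drop out by symmetry: $\grad^2\phi(\gamma)[\gamma',\gamma',\gamma'\times\gamma'']$ is symmetric in two slots carrying $\gamma'$, and one can check it cancels against part of the first term, or vanishes directly.

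The heart of the matter is then a pointwise linear-algebra identity: for any $3\times 3$ matrix $M$ and any orthonormal frame built from $\gamma'$ (a unit vector) together with $\gamma''\perp\gamma'$, one must show that the collection of contractions of $M = \grad(\curl\phi)(\gamma)$ that emerge from the above assembles exactly into $M : (I - \gamma'\otimes\gamma')$. The key elementary facts are that $\curl\phi$ enters because contractions of the form $a\cdot(\grad\phi)\, b - b\cdot(\grad\phi)\, a$ equal $(\curl\phi)\cdot(b\times a)$ up to sign, and that for the orthonormal triple $(\tau, \nu, \beta)$ with $\tau=\gamma'$, one has $I - \tau\otimes\tau = \nu\otimes\nu + \beta\otimes\beta$, while $\gamma'' = \kappa\nu$ and $\gamma'\times\gamma'' = \kappa\beta$. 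I expect that after writing $\gamma'' = \kappa\nu$, $\gamma''' = \kappa'\nu + \kappa\nu' = \kappa'\nu - \kappa^2\tau + \kappa\mathtt{(torsion)}\beta$ (Frenet), all the $\kappa'$, $\kappa^2$, and torsion contributions to the right-hand side will cancel, leaving precisely the curvature-squared terms that reconstitute $\kappa^2(\nu\otimes\nu + \beta\otimes\beta) : M$ — though I would prefer to avoid Frenet frames where possible and argue more invariantly, since the statement should not require the curve to be a Frenet curve pointwise.

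The main obstacle is the bookkeeping in this linear-algebra reduction: correctly tracking which contractions of $\grad(\curl\phi)$ versus $\grad\phi$ appear, getting all signs right in the cross-product manipulations, and confirming that every term not of the form $M:(I-\tau\otimes\tau)$ genuinely cancels. A clean way to organize this is to test \eqref{7} against the formula one would get from Lemma \ref{L1} heuristically — i.e., to note the analogy with $u\otimes u$ being replaced by $\tau\otimes\tau$ and $I-\tau\otimes\tau$ — but a fully rigorous proof needs the explicit computation. I would carry it out by: (i) expanding $\frac{d}{dt}$ and substituting \eqref{6a}; (ii) one integration by parts in $s$; (iii) rewriting every $a\cdot\grad\phi\, b$ combination using $\grad(\curl\phi)$ and the identity for antisymmetrized gradients; (iv) inserting an orthonormal frame adapted to $\gamma'$ and verifying the remaining terms collapse to the claimed right-hand side. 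Steps (i)–(ii) are routine; step (iii)–(iv) is where care is needed, but it is a finite check in $\R^3$.
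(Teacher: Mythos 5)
Your opening moves are the right ones: differentiate under the integral, substitute \eqref{6a}, integrate by parts once on the circle. (The paper does the integration by parts before substituting \eqref{6a}, you do it after; either order works.) But your description of what then emerges is wrong in two material ways, and if you followed your roadmap you would be chasing phantom terms.

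First, no $\grad^2\phi$ term appears. Writing $\gamma'''=\partial_s\gamma''$ and using $\gamma''\times\gamma''=0$, integrating by parts in $\int\phi\cdot(\gamma'\times\gamma''')\,ds$ gives precisely
\[
-\int \partial_j\phi_i(\gamma)\,\gamma_j'\,(\gamma'\times\gamma'')_i\,ds,
\]
still with only one derivative on $\phi$. Added to your first term $\int \partial_j\phi_i(\gamma)\,(\gamma'\times\gamma'')_j\,\gamma_i'\,ds$, this gives the antisymmetrized expression
\[
\int \partial_j\phi_i\,\bigl[(\gamma'\times\gamma'')_j\gamma_i'-\gamma_j'(\gamma'\times\gamma'')_i\bigr]\,ds
=\int(\curl\phi)\cdot\bigl((\gamma'\times\gamma'')\times\gamma'\bigr)\,ds.
\]
Second, and this is the step your proposal never reaches, the triple product collapses right here: since $|\gamma'|^2=1$ and $\gamma'\cdot\gamma''=0$, one has $(\gamma'\times\gamma'')\times\gamma'=\gamma''$. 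No Frenet frame, no $\kappa^2$, no torsion, no "$\kappa^2(\nu\otimes\nu+\beta\otimes\beta)$" bookkeeping ever arises, and the linear-algebra reduction you anticipate as "the heart of the matter" does not exist. After one more integration by parts you obtain $-\int\grad(\curl\phi):\gamma'\otimes\gamma'\,ds$, and the final ingredient — which your proposal also does not identify — is that $\grad(\curl\phi)$ is trace-free because $\div\curl\phi=0$, so one may add $\grad(\curl\phi):I=0$ to arrive at $\int_\Gamma\grad(\curl\phi):(I-\tau\otimes\tau)\,d\Ha^1$. The $I$ in the final formula comes from this $\div\curl=0$ fact, not from any Frenet cancellation. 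In short: you have the right skeleton and the right intuition about the antisymmetrized gradient producing $\curl\phi$, but you misjudge where the computation goes, predicting a long frame-dependent calculation where the argument is actually four lines of invariant vector calculus.
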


In Section \ref{S.Hamiltonian} below, we will see that the identities \eqref{8} and \eqref{7} give rise to the Hamiltonian-Poisson structures of the Euler equation and the binormal curvature flow, respectively. In a certain sense, in Theorems \ref{T1} and \ref{T2} below, we will estimate the distance of these structures.

We briefly recall the proof of Lemma \ref{L2} from \cite{JerrardSmets15}; see also \cite{Jerrard02} for a more general result.

\begin{proof} Let $\gamma$ be an arc-length parametrization of $\Gamma$ satisfying \eqref{6a}. 
We write $\phi$ and  $\gamma$ in components and compute (summing implicitly)
\[
\frac d{dt}
\int_{\R/L\Z} \phi_i(\gamma)\, \partial_s\gamma_i \, ds 
=  \int_{\R/L\Z} \partial_j\phi_i(\gamma)\partial_t \gamma_j \partial_s\gamma_i \,  +
\phi_i(\gamma)\partial_s\partial_t\gamma_i \,ds .
\]
Integrating by parts in the second term on the right-hand side and rearranging,
one finds that
\[
\frac d{dt}
\int_{\R/L\Z} \phi_i(\gamma)\,\partial_s\gamma_i \, ds 
= \int_{\R/L\Z} (\nabla \times \phi)(\gamma) \cdot (\partial_t\gamma \times \partial_s\gamma)\, ds.
\]
Since $\partial_s \gamma\cdot \partial_{ss} \gamma=0$, the equation  \eqref{6a} implies that
$
\partial_t\gamma \times \partial_s\gamma = - \partial_s\gamma \times \partial_t\gamma
= \partial_{ss}\gamma$. One arrives at \eqref{7} by substituting this into the right-hand side, integrating by parts again, and
using the fact that $\nabla(\nabla\times \phi)$
is trace-free, so that $\nabla(\nabla\times \phi): I = 0$.
\end{proof}
 
In \cite{JerrardSmets15}, Smets and the first author develop a notion of 
weak solutions of the binormal curvature flow, in the spirit of
geometric measure theory, based on the identity \eqref{7},
and allowing for phenomena, such as changes of topology, 
seen in vortex filaments in real fluids.
In the  present work, \eqref{6a} is a suitable notion as we deal with smooth flows only. Still, the results from \cite{JerrardSmets15} enter into our analysis through stability estimates in the spirit of Theorem 3 in \cite{JerrardSmets15}. 
 
%
%
%
%
%
%
%
%
%
%
%
%Binormal curvature flow is considered to approximately describe the motion of vortex filaments in certain fluids, see for instance the numerical simulations in \cite{BarenghiHanninenTsubota06}. 

%Moreover, it can also be thought of as a ``Schr\"odinger equation for curves'' 
%
%
%
%
%
%
%
%

%
%
%
%
%
%
%
%
%
%The study of the motion of vortex filaments in ideal fluids was initiated by Helmholtz in his celebrated 1858 paper on rotating incompressible inviscid fluids \cite{Helmholtz1858}. Helmholtz introduced the notion of vortex filaments (``Wirbelf\"aden'') as those space curves which are tangent to the axes of rotation in the fluid, and he gained first insights into the nature of vortex filaments starting from the Euler and vorticity equations. In particular, Helmholtz managed to successfully describe the motion of parallel rectilinear vortices and vortex rings, while a reconstruction of the fluid velocity from the vorticity field for more general vortex structures was still not within range due to mathematical obstacles related to the singular Biot--Savart kernel . The stability of vortex rings was studied by Kelvin in 1867 and 1880 \cite{Kelvin1867,Kelvin1880} --- the outcome of a stimulating observation of interacting smoke rings during a lecture by Tait.  
%

%%%%%%%%%%%%%%%%%%%%%%%%%%%%%%%%%%%%%%%%%%%%%%%%%%%%%%%%%% Main results %%%%%%%%%%%%%%%%%%%%%%%%%%%%%%%%%%%%%%%%%%%%%%%%

\subsection{Main results}\label{S:Main}

In this section we state our two main results. First, we give some conditions under which it can be shown that  Euler vortex filaments evolve to leading order by binormal curvature flow. The first condition is that the vorticity concentrates on an $\eps$-scale around a smooth curve of fixed length. Concentration is measured in terms of the flat norm, introduced in \eqref{def1} above. More precisely, we will be interested in velocity vector fields
such that 
\begin{equation}
\label{7b}
\| \nabla\times \ue - \mu_{\Lambda}\|_F \le \ep L 
\end{equation}
for some curve $\Lambda$ of length $L$, satisfying the weak regularity condition
\eqref{kstarfinite}. 
We will show that \eqref{7b} implies a lower bound on the kinetic energy:
\begin{equation}
\int\frac 12  |\ue|^2 \ge \frac {L \log(L/\ep)}{4\pi} \  - \mathcal{O}(1).
\label{elbd}\end{equation}
This is contained in Theorem \ref{T1} below,
though with a very indirect proof.
We next fix the time rescaling factor in the Euler equations as
\[
k_{\eps} = \frac{4\pi}{|\log(\eps/L)|}.
\]
It is a classical fact that this is the ``right" scaling to obtain binormal curvature flow in
the limit $\ep\to 0$.
As we discuss in Section \ref{S.Hamiltonian} below,
this fact follows formally from the energy scaling \eqref{elbd},
and it is confirmed by our main results.

We also introduce the excess of the kinetic energy relative to the curve $\Lambda$:
 \begin{equation}\label{exc.def}
\exc_\ep(\ue,\Lambda) := \frac{k_{\eps}}{L}\int \frac 12 |\ue|^2\, dx -1,  \qquad\quad L = |\Lambda| . 
\end{equation}
The excess is a dimensionless quantity and is preserved by the evolution if $\ue$ is a conservative solution and $\Lambda$ has constant length (e.g., as a solution to the binormal curvature flow). 
It measures the extent to which the lower bound
in \eqref{elbd} is saturated. We will be interested in velocity fields for which
\begin{equation}
\label{7c}
\exc_{\eps}(\ue,\Lambda)\le C k_{\eps}.
\end{equation}
Together, conditions \eqref{7b}, \eqref{7c} imply that 
the kinetic energy is essentially induced by vorticity.

Our first main result 
 estimates the difference between the right-hand
sides of identities \eqref{8} and \eqref{7} for
Euler and the binormal curvature flow.
This can be understood as an estimate of  the extent to 
which the (distributional) instantaneous velocity
of a vortex filament in a solution of the Euler equations
deviates from the binormal curvature.

\begin{theorem}\label{T1}
Let $\Gamma\subset \R^3$ be an oriented closed Lipschitz curve of length $L$ and let $\gamma:\R/L\Z\to \R^3$
be an arclength parametrization of $\Gamma$ 
such that
\[
\| \kappa^*\|_{L^{1,\infty}} < \infty.
\]
For $\eps\in(0,\frac{L}2)$, let $\ue\in L^2(\R^3;\R^3)$ be divergence-free vector fields such that $\ue$ and $\Gamma$ satisfy condition \eqref{7b}. Then
%such that 
%\begin{equation}\label{57}
%\lVert\curl \ue-\mu_{\Gamma}\rVert_F \le \eps L \qquad\mbox{ for }L :=|\Gamma|
%\end{equation}
%If $0< \ep \le \frac L2$ 
%then 
\[
0\le \exc_\eps(u_{\eps},\Gamma) + \mathcal{O}\left(\|\kappa^*\|_{L^{1,\infty}}^2k_{\eps}\right),
\]
and there exists an absolute constant $C<\infty$ such that
\begin{eqnarray}
\lefteqn{\frac1L\left| k_{\eps} \int \phi : \ue\otimes\ue\, dx - \int_{\Gamma}\phi:\left(I-\tau\otimes\tau\right)\, d\Ha^1 \right|}\nonumber\\
&\le& 
C \|\phi\|_{L^{\infty}}\exc_{\eps}(\ue,\Gamma) + \mathcal{O}\left( k_{\eps} \|\kappa^*\|_{L^{1,\infty}}^2\|\phi\|_{W^{1,\infty}_L}\right)
\label{main.estimate}
\end{eqnarray}
%\end{aligned}\label{main.estimate}\end{equation}
for all $\phi\in W^{1,{\infty}}(\R^3;M^{3\times 3})$, where $M^{3\times 3}$ is the space of $3\times 3$ matrices,
$\| \phi \|_{W^{1,\infty}_L} := \|\phi\|_{L^{\infty}} + L \|\nabla \phi\|_{L^{\infty}}$.
\end{theorem}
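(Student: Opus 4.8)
The plan is to compare the quadratic form $k_\eps\, u^\eps \otimes u^\eps$ against the curve measure $(I-\tau\otimes\tau)\mathcal H^1 \mres \Gamma$ by decomposing the velocity field into a "model" part generated by the vortex filament and a remainder controlled by the excess. First I would introduce the Biot--Savart field $v_\eps$ associated with a smoothed version of $\mu_\Gamma$ on scale $\eps$ — concretely, take $v_\eps = K_\eps * \mu_\Gamma$ for a suitable mollified Biot--Savart kernel (or equivalently solve $\curl v_\eps = \mu_\Gamma^\eps$, $\div v_\eps = 0$, where $\mu_\Gamma^\eps$ is $\mu_\Gamma$ mollified at scale $\eps$). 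Using the tubular neighborhood of radius $r_\gamma/4$ from Lemma \ref{L9cis}, together with the hypothesis $\|\kappa^*\|_{L^{1,\infty}}<\infty$, one computes that $v_\eps$ has the logarithmically divergent energy $\tfrac1{4\pi} L |\log(\eps/L)| + \mathcal O(\|\kappa^*\|_{L^{1,\infty}}^2)$, which after multiplication by $k_\eps$ gives exactly the normalization in \eqref{exc.def}. The key structural fact is that, near $\Gamma$, the leading part of $v_\eps\otimes v_\eps$ (the ``self-interaction'' of the filament) is, to leading order in $k_\eps$, $\eps$-independent after rescaling and reproduces the right-hand side of \eqref{7}: the dominant contribution comes from the circular cross-sectional flow, whose tensor square, integrated over each cross-section and renormalized by $|\log\eps|$, yields precisely $(I-\tau\otimes\tau)$ along $\Gamma$. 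This is the quantitative heart of the local induction approximation, and I expect the careful bookkeeping of the tangential versus normal components of $v_\eps$, uniformly in the curve subject only to the weak bound \eqref{kstarfinite}, to be the main obstacle — one must show that corners and the finitely many self-intersections allowed by \eqref{kstarfinite} contribute only at the order $k_\eps\|\kappa^*\|_{L^{1,\infty}}^2$.

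Second, I would control the difference $u^\eps - v_\eps$. Writing $w_\eps := u^\eps - v_\eps$, the flat-norm hypothesis \eqref{7b} gives $\|\curl w_\eps\|_F \lesssim \eps L$ (after also passing between $\mu_\Gamma$ and its mollification, which costs $\mathcal O(\eps L)$ in flat norm), hence by duality/elliptic estimates $\|w_\eps\|_{L^2}$ is controlled — but not smallness of $w_\eps$ pointwise. Instead, the energy identity
\[
\tfrac{k_\eps}{L}\int \tfrac12 |u^\eps|^2 = \tfrac{k_\eps}{L}\int\tfrac12 |v_\eps|^2 + \tfrac{k_\eps}{L}\int v_\eps\cdot w_\eps + \tfrac{k_\eps}{L}\int\tfrac12 |w_\eps|^2
\]
combined with the computed value of $\int|v_\eps|^2$ shows that the cross term plus the $w_\eps$-energy together equal $\exc_\eps(u^\eps,\Gamma) + \mathcal O(k_\eps\|\kappa^*\|_{L^{1,\infty}}^2)$. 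The first displayed inequality of the theorem, $0\le \exc_\eps + \mathcal O(k_\eps\|\kappa^*\|^2)$, should follow from this once one shows the cross term is itself controlled by (a small multiple of) the $w_\eps$-energy plus lower order — which is where the flat-norm bound on $\curl w_\eps$ enters, together with the fact that $v_\eps$ is nearly curl-free away from the $\eps$-tube and its circulation is matched to that of $\mu_\Gamma$. This forces $\int\tfrac12 k_\eps|w_\eps|^2/L \lesssim \exc_\eps + \mathcal O(k_\eps\|\kappa^*\|^2)$, i.e. the remainder energy is itself dominated (up to $\|\kappa^*\|^2$ errors) by the excess.

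Third, with this dichotomy in hand I would estimate the quantity in \eqref{main.estimate} directly. Expand
\[
k_\eps\int\phi:u^\eps\otimes u^\eps = k_\eps\int\phi:v_\eps\otimes v_\eps + 2k_\eps\int\phi:v_\eps\otimes w_\eps + k_\eps\int\phi:w_\eps\otimes w_\eps.
\]
The first term matches $\int_\Gamma \phi:(I-\tau\otimes\tau)\,d\mathcal H^1$ up to $\mathcal O(Lk_\eps\|\kappa^*\|^2\|\phi\|_{W^{1,\infty}_L})$ by the local computation of Step 1 (here the $\nabla\phi$ part of the $W^{1,\infty}_L$ norm absorbs the error from replacing the exact cross-sectional flow by its leading profile and from the variation of $\phi$ across the $\eps$-tube, which has width $\lesssim\eps$ and thus contributes $L\|\nabla\phi\|_{L^\infty}\cdot\eps \cdot(\text{energy density})$ — note $k_\eps\cdot\eps\cdot|\log\eps| \to 0$). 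The third term is bounded by $\|\phi\|_{L^\infty}\cdot k_\eps\int|w_\eps|^2 \lesssim L\|\phi\|_{L^\infty}\exc_\eps + \mathcal O(Lk_\eps\|\kappa^*\|^2)$ by Step 2. The cross term is handled by Cauchy--Schwarz, $|k_\eps\int\phi:v_\eps\otimes w_\eps| \le \|\phi\|_{L^\infty}(k_\eps\int|v_\eps|^2)^{1/2}(k_\eps\int|w_\eps|^2)^{1/2}$; here $k_\eps\int|v_\eps|^2 \sim L$ while $k_\eps\int|w_\eps|^2 \lesssim L\exc_\eps + \mathcal O(Lk_\eps\|\kappa^*\|^2)$, so this term is $\lesssim L\|\phi\|_{L^\infty}(\exc_\eps + k_\eps\|\kappa^*\|^2)^{1/2} \lesssim L\|\phi\|_{L^\infty}\exc_\eps + \mathcal O(Lk_\eps\|\kappa^*\|^2\|\phi\|_{L^\infty})$ after using $\exc_\eps \lesssim k_\eps$ from \eqref{7c} (equivalently, $\sqrt{ab}\le a + b$). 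Dividing by $L$ yields \eqref{main.estimate}. The delicate point throughout is that every error term must be arranged to carry either a factor $\exc_\eps\|\phi\|_{L^\infty}$ or a factor $k_\eps\|\kappa^*\|_{L^{1,\infty}}^2\|\phi\|_{W^{1,\infty}_L}$ and nothing larger; the balance $k_\eps|\log(\eps/L)| = 4\pi$ is exactly what makes the $\eps$-tube width errors land at order $k_\eps$ rather than $O(1)$, and the low-regularity input \eqref{kstarfinite} is exactly strong enough (via the security radius) to keep the self-interaction and near-self-intersection errors at order $k_\eps\|\kappa^*\|^2$.
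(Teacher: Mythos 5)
Your overall architecture (decompose $\ue = \ve + w^\eps$ with $\ve$ the mollified Biot--Savart field, expand the quadratic form, estimate the three pieces) is exactly the paper's strategy, and your treatment of the pure $\ve$ piece and the pure $w^\eps$ piece is essentially right. But the way you handle the cross term $k_\eps\int\phi:\ve\otimes w^\eps$ contains a genuine gap.

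Cauchy--Schwarz gives you $k_\eps|\int\phi:\ve\otimes w^\eps| \le \|\phi\|_{L^\infty}\, (k_\eps\|\ve\|_{L^2}^2)^{1/2}(k_\eps\|w^\eps\|_{L^2}^2)^{1/2} \lesssim L\|\phi\|_{L^\infty}\,(\exc_\eps + k_\eps\|\kappa^*\|^2)^{1/2}$. Now \eqref{7c} is \emph{not} a hypothesis of Theorem~\ref{T1} (it appears only in the discussion preceding the theorems and as an assumption on initial data in Theorem~\ref{T2}), so you cannot use $\exc_\eps \lesssim k_\eps$ here. But more importantly, even if you grant it, the resulting bound $(k_\eps\|\kappa^*\|^2)^{1/2} = k_\eps^{1/2}\|\kappa^*\|$ is of order $\sqrt{k_\eps}$, which is much larger than the allowed $\mathcal{O}(k_\eps\|\kappa^*\|^2)$ when $\eps$ is small (e.g.\ $\Gamma$ a circle, $\|\kappa^*\|_{L^{1,\infty}}$ bounded). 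The inequality $\sqrt{ab} \le a+b$ that you invoke runs the wrong way here: you would need $(k_\eps\|\kappa^*\|^2)^{1/2}\lesssim k_\eps\|\kappa^*\|^2$, i.e.\ $k_\eps^{1/2}\|\kappa^*\|\gtrsim 1$, which fails in the $\eps\to 0$ limit. So Cauchy--Schwarz is simply too lossy for the cross term: the singular part $\|\ve\|_{L^2}\sim|\log\eps|^{1/2}$ eats half of the $k_\eps$ gain.

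The paper avoids this with two sharper devices. First, to obtain the analogue of your ``$\|w^\eps\|_{L^2}^2 \lesssim L|\log\eps|\exc_\eps + \mathcal{O}(\|\kappa^*\|^2)$'' (their \eqref{138}), the cross energy $\int\ve\cdot(\ve-\ue)$ is \emph{not} estimated by Cauchy--Schwarz but by a duality pairing through the stream function of $\ve$: one writes $\ve = \nabla\times\Phi^\eps$, pairs against $\curl(\ve-\ue)$ in flat norm, and uses $\|\ve\|_{L^\infty}\lesssim\eps^{-1}\|\kappa^*\|_{L^{1,\infty}}$ together with $\|\curl\ve-\curl\ue\|_F\lesssim\eps$ to get the \emph{$\eps$-independent} bound $|\int\ve\cdot(\ve-\ue)|\lesssim\|\kappa^*\|_{L^{1,\infty}}$. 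Second, for the weighted cross term $\int\phi(\ue_i-\ve_i)\ve_j$ they do not use $L^2\times L^2$ but $L^{4/3}\times L^4$ H\"older, with $\|\ve\|_{L^4}\lesssim\eps^{-1/2}\|\kappa^*\|$ (Proposition~\ref{Prop.ve}) and a Marcinkiewicz-type interpolation $\|f\|_{L^{4/3}}\lesssim\|f\|_{L^{1,\infty}}^{1/2}\|f\|_{L^2}^{1/2}$, combined with the flat-norm implied weak-$L^1$ bound $\|\ue-\ve\|_{L^{1,\infty}}\lesssim\eps$ (Lemma~\ref{L12bis}). This yields $\lesssim\|\phi\|_{L^\infty}\|\ue-\ve\|_{L^2}^{1/2}\|\kappa^*\|$ --- a \emph{square root} of the $L^2$ error rather than the $L^2$ error itself --- and then Young's inequality (using $\|\kappa^*\|_{L^{1,\infty}}\ge 1$) closes the estimate at the required order $\exc_\eps + k_\eps\|\kappa^*\|^2$. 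Without passing through $L^{1,\infty}$ and the interpolation you do not see this gain, and the naive cross-term estimate does not close.
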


In view of Lemmas \ref{L1} and \ref{L2}, the conclusion \eqref{main.estimate}
implies that if $\ue(x,t)$ is a solution of the Euler equation, $\{ \Gamma(t)\}$ is a
binormal curvature flow of length $L$, 
and if $\| \nabla \times \ue(\cdot,t_0) - \mu_{\Gamma(t_0)}\| \le \ep L$ at some time $t_0$,
then
\[
\left| \frac d{dt} \int \nabla\times \ue \cdot \phi - \frac d{dt} \int _{\Gamma(t)} \phi\cdot \tau
\right| \le 
C \|\phi\|_{L^{\infty}}\exc_{\eps}(\ue,\Gamma) + \mathcal{O}\left( k_{\eps} \|\kappa^*\|_{L^{1,\infty}}^2\|\phi\|_{W^{1,\infty}_L}\right)
\]
at time $t_0$. This shows that (at time $t_0$) the vorticity in $\ue$ is
close in a distributional sense to a binormal curvature flow if the excess is small.

The same estimate \eqref{main.estimate} may also be understood as describing the distance of the Hamiltonian-Poisson structures associated with the Euler equation, cf.\ \eqref{8}, and the binormal curvature flow, cf.\ \eqref{7}. We discuss this in more detail in Section \ref{S.Hamiltonian}, where we also review the Hamiltonian-Poisson structures of both evolutions.

Like all prior work\footnote{Here and in what follows, we are omitting  papers 
\cite{ FraenkelBerger74,AmbrosettiStruwe89,BenedettoCagliotiMarchioro00} that assume rotational and often additional symmetries.} on this subject, Theorem \ref{T1} addresses only the
question of estimating the instantaneous velocity of a vortex filament
known to concentrate at a fixed time $t_0$ around a curve -- it does not say anything about when
such a concentration condition is preserved by the dynamics. But in
a number of other ways, it  improves on known results:

\begin{itemize}
\item
As far as we know, all previous studies of
the dynamics of vortex filaments consist of asymptotic computations
%of fluid velocity near a vortex filament
that describe only  {\em highly idealized} vortex filaments, such as 
the ``prototype velocity field" associated to an $\ep$-regularization
of a $C^2$ filament, introduced in Section \ref{S.method} below.

By contrast, Theorem \ref{T1} 
applies to a much larger and more physically reasonable class of velocity fields
 --- those with vorticity concentrated
in the weak sense \eqref{7b} about some curve $\Gamma$, 
%curves with possibly low regularity $\|\kappa^*\|_{L^{1,\infty}}<\infty$,
and with small excess.

\item Earlier results that we are aware of do not obtain any very useful control over error terms, whereas Theorem \ref{T1} quantifies errors well enough to conclude in Theorem \ref{T2} below  that, as long as
the vorticity remains concentrated around {\em some} curve,
one can control the closeness
of the vorticity to a binormal curvature 
flow over a macroscopic time interval.

One reason this is possible is that the distributional estimates that we obtain, relating vortex
filament velocity and the binormal curvature flow,
seem to be more useful than the pointwise estimates found in earlier work.

\item Theorem \ref{T1} shows that, at least at a fixed time, the binormal curvature flow 
approximates the velocity of vortex filaments (in a distributional sense), even when the vorticity is concentrated around a curve of low regularity,
measured by the geometric quantity $\| \kappa^*\|_{L^{1,\infty}}$. 

For example, a recent paper \cite{delaHozVega} of de la Hoz and Vega studies the binormal
curvature flow for initial data given by a regular planar polygon.
Owing to the weak regularity conditions imposed on the
curve $\Gamma$, Theorem \ref{T1} implies that if one considers the Euler equation
for initial data whose vorticity is concentrated in the sense of \eqref{7b}
around a polygon, and with small excess, then the distributional
initial velocity of the vorticity is close to the distributional
binormal curvature of the polygon -- 
a sum of delta functions at its vertices.

\end{itemize}

In short, to the best of our knowledge, this is the first time that dynamics of  vortex filaments in Euler flows have been approached in a quantitative way. 

Our second main result  shows, as discussed above, that a
distributional estimate such as \eqref{main.estimate}
is sufficient to ensure that a vortex filament remains close to
a binormal curvature flow over a macroscopic time interval ({\em i.e.}
bounded below, independent of $\ep$, as $\ep\to 0$).

\begin{theorem}
Let
$\{\Gamma(t)\}_{t\in[0,T]}$ be a family of smooth curves evolving by binormal curvature flow \eqref{6a} with $|\Gamma(t)|=L$ and with arc-length parametrizations $\gamma  : [0,T]\times \R/L\Z \to \R^3$  satisfying
\begin{equation}\label{Rgamma}
\sup_{0\le t\le T} \|\partial_s^4\gamma\|_{L^\infty} <\infty, \qquad\qquad \inf_{0\le t\le T,s\in \R/L\Z}  r_{\gamma(t,\cdot)} (s)  >0.
\end{equation}
For $\eps\in(0,\frac{L}2)$, let $\ue$ be a conservative weak solution to the Euler equation \eqref{1}, \eqref{2}, for initial data satisfying 
\begin{equation}
\| \nabla\times \ue(0) - \mu_{\Gamma(0)}\|_F \le \ep L,
\qquad
\exc_{\eps}(\ue(0),\Gamma(0))\le C k_{\eps}.
\label{initial.ue}\end{equation}

If, for every $t\in [0,T]$, there is a Lipschitz curve $\Lambda(t)$ with arc-length parametrization $\lambda(t,\tacka):  \R/L\Z \to \R^3$ and such that 
$\ue(t)$ and $\Lambda(t)$ satisfy \eqref{7b},
and if in addition  
\begin{equation}\label{kstar.uniform}
\mbox{ $|\Lambda(t)|=L$ for all $t$,\  } \qquad\qquad \sup_{t\in[0,T]}\| \kappa^*_{\Lambda(t)}(t,\cdot) \|_{L^{1,\infty}} <\infty \,,
\end{equation}
then there exists a function 
%$\sigma:[0,T]\times \R/L\Z\to \R/L\Z$ and a constant $C'<\infty$, depending only
$\bar\sigma:[0,T]\to \R/L \Z$ and a constant $C'<\infty$, depending only
the bounds in \eqref{Rgamma}, \eqref{initial.ue},  \eqref{kstar.uniform},
and in particular independent of $\ep$ and $L$,
such that for any $t\in[0,T]$,
\begin{eqnarray}
\max_{s \in \R/L\Z}L^{-1} \left|\lambda(t,s) - \gamma(t,s +\bar \sigma(t)) \right| &\le& C' k_{\eps}^{1/2},\label{est1}\\
\left(L^{-1}\int_{\R/L\Z} |\partial_s\lambda(t,s) - \partial_s\gamma(t, s+\bar \sigma(t))|^2 \, ds\right)^{1/2} &\le& C' k_{\eps}^{1/2}\label{est2}.
\end{eqnarray}
\label{T2}\end{theorem}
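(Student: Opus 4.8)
The plan is to combine the distributional estimate of Theorem~\ref{T1}, applied at each fixed time, with the stability machinery for the binormal curvature flow developed in \cite{JerrardSmets15}. The first observation is that, since the excess $\exc_\eps(\ue(t),\Lambda(t))$ is a conserved quantity for conservative solutions along curves of fixed length, the bound $\exc_\eps(\ue(0),\Gamma(0))\le Ck_\eps$ in \eqref{initial.ue}, together with $|\Gamma(0)|=|\Lambda(t)|=L$, propagates to give $\exc_\eps(\ue(t),\Lambda(t))=\O(k_\eps)$ for every $t\in[0,T]$; here one uses that the energy of $\ue$ is constant in $t$ and that the normalization constant in the excess depends only on $L$ and $\eps$. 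Feeding this and \eqref{7b} into Theorem~\ref{T1} (with $\Gamma$ there taken to be $\Lambda(t)$, whose $\|\kappa^*\|_{L^{1,\infty}}$ is uniformly controlled by \eqref{kstar.uniform}), we obtain for each $t$ and each matrix-valued test field $\phi$
\[
\frac1L\left| k_\eps \int \phi:\ue(t)\otimes\ue(t)\,dx - \int_{\Lambda(t)}\phi:(I-\tau\otimes\tau)\,d\Ha^1\right| \le \O\!\left(k_\eps\,\|\phi\|_{W^{1,\infty}_L}\right).
\]
Combined with Lemma~\ref{L1}, this says that $\tfrac{d}{dt}\int \phi\cdot d(\curl\ue)$ agrees, up to an $\O(k_\eps)$ error measured against $\|\curl\phi\|_{W^{1,\infty}_L}$, with the right-hand side $\int_{\Lambda(t)}\grad(\curl\phi):(I-\tau\otimes\tau)\,d\Ha^1$ appearing in Lemma~\ref{L2} for a BCF. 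In other words, the measures $\mu_{\Lambda(t)}$ — or more precisely the vorticities $\curl\ue(t)$, which by \eqref{7b} stay flat-close to $\mu_{\Lambda(t)}$ — form an approximate weak solution of the binormal curvature flow in the sense of \cite{JerrardSmets15}, with a quantitative defect of size $\O(k_\eps)$ per unit time.

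The second main ingredient is the weak–strong stability estimate for the binormal curvature flow, Theorem~3 of \cite{JerrardSmets15}: if $\{\Gamma(t)\}$ is a smooth exact BCF satisfying the regularity bounds \eqref{Rgamma}, and $t\mapsto\curl\ue(t)$ (or $\mu_{\Lambda(t)}$) is an approximate weak BCF starting flat-close to $\mu_{\Gamma(0)}$, then a suitable ``modulated'' distance between the two — controlling both a flat/$L^2$ displacement of the curves (after a reparametrization shift $\bar\sigma(t)$, reflecting the translation invariance of arclength parametrizations) and the excess — satisfies a Gronwall inequality whose forcing term is exactly the defect just bounded. Integrating the Gronwall inequality over $[0,T]$, with initial distance $\O(\eps L + k_\eps L)$ by \eqref{initial.ue} and hourly forcing $\O(k_\eps)$, yields a bound of order $k_\eps$ on the squared modulated distance, hence $\O(k_\eps^{1/2})$ on the displacement and the tangent-difference; the dependence of $C'$ only on the bounds in \eqref{Rgamma}, \eqref{initial.ue}, \eqref{kstar.uniform} follows because these are the only data entering the Gronwall constant and the defect estimate. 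Translating the abstract ``modulated distance'' bound into the concrete estimates \eqref{est1}, \eqref{est2} requires comparing $\lambda(t,\cdot)$ with $\gamma(t,\cdot+\bar\sigma(t))$ in $L^\infty$ and $\dot H^1$; the $L^\infty$ bound \eqref{est1} comes from the flat/displacement control plus the uniform $\|\kappa^*\|_{L^{1,\infty}}$ regularity (via the tube/projection structure of Lemma~\ref{L9cis}), and \eqref{est2} comes from the energy-excess control, since $\int|\partial_s\lambda - \partial_s\gamma|^2$ is, after the shift, comparable to the excess of $\ue$ relative to $\gamma(t,\cdot+\bar\sigma(t))$, which is in turn comparable to $\exc_\eps(\ue(t),\Lambda(t))=\O(k_\eps)$ plus a displacement term.

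The main obstacle I anticipate is the matching between the two frameworks: Theorem~\ref{T1} controls the time-derivative of $\int\phi\cdot d(\curl\ue)$ only against test fields of the form $\curl\phi$ with $\phi\in W^{1,\infty}$, i.e. in a negative-order (essentially $\dot W^{-1,\infty}$, dual to the flat norm) topology, whereas the stability estimate of \cite{JerrardSmets15} is formulated in terms of a specific modulated energy/distance functional. One must check that the defect produced by Theorem~\ref{T1} is admissible as a forcing term for \emph{that} functional — in particular that the matrix test field one needs to plug in to reproduce the time-derivative of the modulated distance (which involves the smooth comparison curve $\gamma$ and its derivatives up to fourth order, hence \eqref{Rgamma}) has $\|\phi\|_{W^{1,\infty}_L}$ bounded in terms of the allowed data. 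A secondary technical point is handling the reparametrization degree of freedom: one has to show that a measurable (ideally Lipschitz) choice of $\bar\sigma(t)$ exists along the flow so that the shifted curves stay in the regime where the modulated distance is coercive, which is where the uniform lower bound on the security radius in \eqref{Rgamma} and the tube structure of Lemma~\ref{L9cis} are essential. Finally, one must verify that conservativity of $\ue$ is genuinely used only to freeze the excess (energy constant) and nowhere else, so that the hypothesis is not stronger than stated.
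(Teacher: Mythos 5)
Your proposal is correct in outline and follows essentially the same route as the paper: conserve the excess along the flow, apply Theorem~\ref{T1} at each time, and close a Gr\"onwall argument using the stability estimate \eqref{rprop} from \cite{JerrardSmets15}, then convert the abstract bound into \eqref{est1}--\eqref{est2} with a reparametrization shift $\bar\sigma(t)$. The one concrete ingredient you leave abstract — and which the paper makes explicit — is the modulated quantity itself: one works with $E_\gamma(\Lambda,t)=1-\int X_{\gamma(t)}\cdot d\mu_{\Lambda(t)}$, where $X_{\gamma(t)}$ is a specific $C^2$ compactly-supported extension of the tangent field $\tau_{\Gamma(t)}$ into a tube around $\Gamma(t)$, built from the projection $\zeta_t$ of Lemma~\ref{L9cis} and a radial cutoff; the Gr\"onwall forcing comes from plugging $\phi=\nabla(\nabla\times X_\gamma)$ into Theorem~\ref{T1} (and a separate $\partial_t X_\gamma$ term from the time dependence of $X_{\gamma(t)}$), with $\|X_\gamma\|_{W^{3,\infty}}\lesssim 1$ and $\|\nabla\times\partial_t X_\gamma\|_{L^\infty}\lesssim 1$ furnished by \eqref{Rgamma}, which is precisely the ``matching'' you flagged as a potential obstacle. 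Your worry about $\bar\sigma(t)$ is also addressed straightforwardly: once $E_\gamma(\Lambda,t)\lesssim |\log\ep|^{-1}$, one shows $\delta_\Gamma(s)=\dist(\lambda(s),\Gamma(t))$ is small uniformly in $s$ via an $L^1$-to-$L^\infty$ argument, takes $\bar\sigma(t)=\zeta_t(\lambda(t,0))$, and then derives \eqref{est2} directly from the pointwise lower bound $1-X_\gamma\cdot\tau_\Lambda\ge\frac14|\gamma'(\sigma(s))-\lambda'(s)|^2$ rather than from an excess comparison as you suggest; the latter heuristic is morally right but the paper's route is more direct.
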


Our statement contains two estimates. First, \eqref{est1} shows that in the regime of small $\eps$, the curves $\Lambda$ and $\Gamma$ are close one to the other uniformly in $t$ and $s$. The distance of both curves is of order $|\log\eps|^{-1/2} $ and thus controlled by the initial datum via \eqref{initial.ue}. The somewhat weaker $L^2$ bound on the difference of the tangents at $\Lambda$ and $\Gamma$ is displayed in \eqref{est2}. The latter ensures the closeness of both curves in a very geometric sense: The curves are locally very
nearly parallel. %evolve essentially linearly in space. 
Because no assumptions are  imposed on the arc-length parametrizations $\lambda$, the spatial translations $\bar \sigma(t)$ are necessary in both \eqref{est1} and \eqref{est2}.

%Notice that we assume that the Euler vorticity concentrates around the rather rough curves $\Lambda(t)$ in the sense of condition \eqref{7b}: The curves $\Lambda(t)$ are Lipschitz curves of fixed length with weakly integrable (generalized) curvature $\kappa^*$, see \eqref{kstar.uniform}.
%All the assumptions on $\Lambda$ and $\Gamma$, and  in particular,  \eqref{Rgamma} and \eqref{kstar.uniform},  are independent of the parameter $\eps$, that characterizes the Euler vorticity via \eqref{initial.ue}.
%We consider the low regularity assumptions imposed on $\ue$ and $\Lambda$ as main improvement over earlier derivations of the BCF from the Euler equations. 
%Moreover, to the best of our knowledge, it is for the first time that the vortex filament conjecture for Euler flows is approached in a quantitative way. 

As remarked above, the existence of smooth solutions of the binormal curvature flow \eqref{6a} is standard, so hypotheses \eqref{Rgamma}, \eqref{initial.ue} are assumptions about the initial data. 
The content of the theorem is that to prove the vortex filament conjecture for
such initial data, it suffices to find a solution $\ue$
for which vorticity remains concentrated around {\em some} curve,
in the sense of \eqref{7b}, \eqref{kstar.uniform}. 
As far as we know, this is the first result to describe any conditions under which
vortex filaments can be related to the binormal curvature flow for a
macroscopic length of time. In addition,
Theorem \ref{T2}, like Theorem \ref{T1} above, is quantitative in a way that
one does not find in the previous literature.

The task of constructing solutions as considered in Theorem \ref{T2}, should one hope to do so, 
is in principle made easier by the 
the facts that $\ue$ need only belong to $L^2$, that vorticity concentration
is required only in the weak sense \eqref{7b}, and that only the
weak regularity condition
\eqref{kstar.uniform} is {\em a priori} required for
the curves $\Lambda(t)$.
For example, it is conceivable that one could construct
weak solutions  satisfying \eqref{7b}, \eqref{kstar.uniform} using 
techniques inspired by convex integration, as developed in references
such as
\cite{DeLellisSzekelyhidi09,DeLellisSzekelyhidi10, BdLIsSz, Wiedemann11, SzekelyhidiWiedemann12}. On the other hand, the theorem
shows that the regime described by \eqref{7b}, \eqref{kstar.uniform}
is quite ``rigid", and so may well be inaccessible to these techniques,
which exploit ``flexible" aspects of the Euler equations.

\begin{remark}\label{R:rescale}
%Given $\Gamma, u$ and $\ep$, if we rescale by defining, for some $\alpha>0$,
%\[
%\tilde \Gamma = \alpha \Gamma, \quad \tilde u(x) = \frac 1\alpha u(\frac x \alpha), \quad \tilde \ep = \alpha \ep
%\]
%then one easily checks that
%\[
%\| \nabla\times \tilde u - \mu_{\tilde \Gamma}\|_F = \alpha^2\| \nabla\times  u - \mu_{ \Gamma}\|_F,\qquad
%\tilde \ep |\tilde \Gamma| = \alpha^2 \ep |\Gamma|, \qquad
%\exc_{\tilde \ep}(\tilde u, \tilde \Gamma) = 
%\alpha \exc_{\ep}(u,  \Gamma) . 
%\]
For $\alpha>0$, the statements of the theorems are invariant with respect to the rescaling
\[
\Gamma \mapsto \tilde \Gamma =  \alpha \Gamma, \qquad
\ep \mapsto \tilde \alpha = \alpha\ep, \qquad
u(\cdot) \mapsto \tilde u(\cdot)  = \frac 1\alpha u\big(\frac \cdot \alpha \big) .
\]
Due to this scaling invariance, it  suffices to  prove the Theorems \ref{T2} and \ref{T1} for $|\Gamma|=1$.

In particular, we remark that if we write $\kappa^*$ and $\tilde \kappa^*$ for the functions defined in \eqref{kappastar},
associated to parametrizations of $\Gamma$ and $\tilde \Gamma$ respectively, then
it is straightforward to check that
$\| \kappa^* \|_{L^{1,\infty}} = \| \tilde \kappa^* \|_{L^{1,\infty}} $.
We believe that this invariance makes $\| \kappa^*\|_{L^{1,\infty}}$ 
a natural quantity to consider.
\end{remark}

\subsection{Method of this paper}\label{S.method}
 In this subsection, we explain the ideas of the proofs of our main results, Theorems \ref{T2} and \ref{T1} above. In view of Remark \ref{R:rescale}, it is enough to consider the case $L=1$ in the following.
 
The main estimate \eqref{main.estimate} of Theorem \ref{T1} is derived by estimating the (Euler) velocity $\ue$ against a prototype velocity field $\ve$, which itself satisfies the assertions of the theorem. To be more specific, given the curve $\Gamma$ from the hypothesis, we construct $\ve$ by
\[
\ve = \curl(-\laplace)^{-1}(\rho^{\eps}\ast \mu_{\Gamma}).
\]
Here, $\{\rho^{\eps}\}_{\eps\downarrow0}$ denotes a sequence of radially symmetric standard mollifiers in $\R^3$, supported in a ball of radius $\eps$ and such that $\rho^{\eps}(x)  = \eps^{-3}\rho^1(\eps^{-1}x)$. The convolution of $\rho^{\eps}$ with a measure $\mu$ is defined as
\[
\rho^{\eps}\ast \mu(x) =  \int_{\R^3}\rho^{\eps}(x-y)\, d\mu(y) .
\]
Moreover, the nonlocal differential operator $\curl (-\laplace)^{-1}$ associates to a vorticity field $\omega$ a vector potential $v$ via the Biot--Savart law
\begin{equation}
\label{57b}
v(x) = \curl(-\laplace)^{-1}\omega (x)  = \frac1{4\pi} \int \frac{x-y}{|x-y|^3}\times \omega(y)\, dy.
\end{equation}
That is, $v$ is the unique divergence free vector field satisfying $\curl v=\omega$. Notice that the Biot--Savart kernel is obtained as the curl of the Newtonian potential. We will sometimes write $(\curl)^{-1} = \curl(-\laplace)^{-1}$. The vector field $\ve$   can thus be written as
\[
v^{\eps}(x)=\frac1{4\pi} \int \int_0^1\rho^{\eps}(x-y) \frac{\gamma(s)-y}{|\gamma(s)-y|^3}\times\gamma'(s)\, ds\,dy
\]
where, as above, $\gamma:\R/\Z\to \R^3$ is an arclength parametrization 
of a smooth non-self-intersecting curve $\Gamma$.

The main properties of $\ve$ that we will use later on are summarized in the following proposition.

\begin{prop}\label{Prop.ve}
The following estimates hold:
\begin{gather}
\|\curl \ve - \mu_{\Gamma}\|_F\le \eps,
\label{ve.0}\\
\| \ve\|_{L^q}  \lesssim   \ep^{\frac 2q - 1} \| \kappa^*\|_{L^{1,\infty}} \quad \mbox{for all }q\in (2,\infty],
\label{ve.1}\\
%\| \ve\|_{L^\infty} \lesssim   \ep^{-1} \| \kappa^*\|_{L^{1,\infty}},
%\label{ve.1a}\\
\| \ve\|_{L^2}^2 = \frac {1}{2\pi} |\log\ep| + 
\mathcal{O}\left(\|\kappa^*\|_{L^{1,\infty}}^2 \right),
\label{ve.2}
\end{gather}
and
\begin{equation}
\frac{4\pi}{|\log \eps|}\int \phi: v^{\eps}\otimes v^{\eps}\, dx  =  \int_{\Gamma} \phi:(I-\tau\otimes\tau)\, d\Ha^1 
+ \mathcal{O}\left(\frac{\|\kappa^*\|_{L^{1,\infty}}^2 \|\phi\|_{W^{1,\infty}}}{|\log\eps|}\right),
\label{ve.4}\end{equation}
for all $\phi\in W^{1,\infty}(\R^3; M^{3\times 3})$, where $M^{3\times 3}$ denotes the space of $3\times 3$ matrices.
\end{prop}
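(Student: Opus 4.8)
The plan is to estimate $\ve$ and its quadratic combinations directly from the explicit double-integral representation $\ve(x) = \frac{1}{4\pi}\int\int_0^1 \rho^\eps(x-y)\,\frac{\gamma(s)-y}{|\gamma(s)-y|^3}\times\gamma'(s)\,ds\,dy$, localizing all computations to the tube of radius $r_\gamma/4$ around $\Gamma$, where by Lemma~\ref{L9cis} the nearest-point projection onto $\Gamma$ is well-defined. The estimate \eqref{ve.0} is the easiest: for $\xi\in C^1_c$ with $\|\curl\xi\|_{L^\infty}\le 1$ one writes $\int\xi\cdot d(\curl\ve-\mu_\Gamma) = \int\xi\cdot(\rho^\eps\ast\mu_\Gamma-\mu_\Gamma)$, and since $\rho^\eps$ is supported in a ball of radius $\eps$ one can bound this by $\eps$ times the total variation of $\mu_\Gamma$ after pairing against $\curl\xi$ — more precisely, using that $\xi-\rho^\eps\ast\xi$ has curl bounded in terms of $\eps\|\curl\xi\|$ plus divergence-free pieces that annihilate against $\mu_\Gamma$. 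For the $L^q$ bound \eqref{ve.1} I would split the $y$-integral so that $\gamma(s)$ stays within distance $\sim r_\gamma(s_0)$ of the projection point: on that near zone the Biot--Savart kernel behaves like that of a straight segment, giving the logarithmically divergent piece only in $L^2$ and an $\eps^{2/q-1}$ blow-up for $q>2$, with the dependence on geometry entering exactly through $\int r_\gamma^{-1} = \|\kappa^*\|_{L^1}\lesssim\|\kappa^*\|_{L^{1,\infty}}$ (using the weak-$L^1$ bound to control the layer-cake integral); the far zone contributes a bounded amount by the security-radius lower bound $|\gamma(s+h)-\gamma(s)|\ge h/2$.

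The quantitative heart is \eqref{ve.2} and \eqref{ve.4}. For \eqref{ve.2} I would compute $\|\ve\|_{L^2}^2 = \int\int\int G^\eps(\gamma(s)-\gamma(t))\,\gamma'(s)\cdot\gamma'(t)\,ds\,dt$, where $G^\eps$ is the (doubly mollified) Biot--Savart energy kernel $\sim \frac{1}{4\pi|z|}$ for $|z|\gg\eps$. The diagonal contribution $|s-t|\lesssim r_\gamma(s)$ produces, after a change of variables straightening $\Gamma$ to its tangent line, the leading term $\frac{1}{2\pi}|\log\eps|$ plus an $O(1)$ error controlled by curvature integrated against the kernel; away from the diagonal the kernel is bounded by $1/|s-t|$, which integrates to something controlled by $\|\kappa^*\|_{L^{1,\infty}}^2$ via the security radius (this is where one must be careful to absorb the logarithmic cross terms coming from regions where $|\gamma(s)-\gamma(t)|$ is small but $|s-t|$ is not, i.e., near self-intersections — the weak-$L^1$ norm of $\kappa^*$ is precisely designed to make these terms summable). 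For \eqref{ve.4} the scheme is the same but with the test matrix $\phi$ inserted: one writes $\int\phi:\ve\otimes\ve$ as a double curve integral against $\phi(x)$ integrated over the two mollifier balls, Taylor-expands $\phi$ around the projection point (the first-order term costing a factor $\|\nabla\phi\|_{L^\infty}\cdot\eps$, hence the $\|\phi\|_{W^{1,\infty}}$ and the extra $|\log\eps|^{-1}$), and identifies the leading tensor: on the diagonal $\ve\otimes\ve$ points essentially in the plane orthogonal to $\tau$ and, after integrating the divergent kernel, averages to $\frac{|\log\eps|}{4\pi}(I-\tau\otimes\tau)$ along $\Gamma$, which is exactly the right-hand side of Lemma~\ref{L2} after multiplying by $k_\eps = 4\pi/|\log\eps|$.

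The main obstacle I anticipate is the bookkeeping of the error terms near near-self-intersections and corners, i.e., regions where $s$ and $t$ are far apart in $\R/\Z$ but $\gamma(s)$, $\gamma(t)$ are close in $\R^3$: there the Biot--Savart kernel is large, and one must show the total contribution is still only $O(\|\kappa^*\|_{L^{1,\infty}}^2)$ rather than something with a spurious $|\log\eps|$. The right tool is to bound such a contribution by $\int\int_{|\gamma(s)-\gamma(t)|\le\delta} |\gamma(s)-\gamma(t)|^{-1}\,ds\,dt$ and estimate this using the security-radius definition together with the layer-cake formula for $\|\kappa^*\|_{L^{1,\infty}}$; the scale-invariance of $\|\kappa^*\|_{L^{1,\infty}}$ noted in Remark~\ref{R:rescale} is a good consistency check that the final constants come out dimensionless. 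A secondary technical point is making the straightening change of variables on the near zone uniform in $s$, which again uses the second clause of \eqref{r.def} ($|\gamma'(s+h)-\gamma'(s)|\le|h|/r$) to control the Jacobian; I would isolate this as a lemma before doing the three energy estimates.
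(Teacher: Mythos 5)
There is a genuine gap, and it shows up already in the $L^q$ estimates. You invoke ``$\int r_\gamma^{-1} = \|\kappa^*\|_{L^1} \lesssim \|\kappa^*\|_{L^{1,\infty}}$'' to absorb the geometry into the weak norm, but this inequality goes the wrong way: the weak $L^1$ norm does not control the $L^1$ norm, and in fact the whole point of working with $\|\kappa^*\|_{L^{1,\infty}}$ (see the remark after \eqref{kstarfinite}) is to allow corners, near which $\kappa^* \sim 1/|s-s_0|$ is \emph{not} integrable. The mechanism the paper actually uses is a sublevel-set estimate along the curve: Lemma~\ref{L.linear} shows $|\{ s : |\gamma(s)-x|<r\}| \lesssim r\,\|\kappa^*\|_{L^{1,\infty}}$ for every $x\in\R^3$ and $r>0$, and one then sums dyadically in the distance to $\Gamma$. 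That is the estimate that replaces any attempt to integrate $\kappa^*$ directly, and without it your near-zone/far-zone split and your diagonal/off-diagonal split in \eqref{ve.2} have no way to produce $\|\kappa^*\|_{L^{1,\infty}}$ rather than $\|\kappa^*\|_{L^1}$.

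The argument you sketch for \eqref{ve.0} is also not quite right as stated. You write $\int\xi\cdot d(\curl\ve-\mu_\Gamma) = \int(\rho^\eps\ast\xi-\xi)\cdot d\mu_\Gamma$ and then claim ``$\xi-\rho^\eps\ast\xi$ has curl bounded in terms of $\eps\|\curl\xi\|$''; but $\curl(\xi - \rho^\eps\ast\xi) = \curl\xi - \rho^\eps\ast\curl\xi$ has $L^\infty$ norm of order $1$, not $\eps$, and more to the point a bound on $\curl(\xi - \rho^\eps*\xi)$ is not what the pairing requires. The correct route (Lemma~\ref{L6} in the paper) is a homotopy formula: for each shift $z$ one exhibits an explicit measure $R_z$ supported between $\Gamma$ and $\Gamma+z$ with $\nabla\times R_z = \sigma_z\mu_\Gamma - \mu_\Gamma$ and $|R_z|(\R^3)\le |z|\,|\Gamma|$, then averages over $z$ with weight $\rho^\eps$; the key cancellation is that the full-gradient piece of the integrand integrates to zero on the closed curve, which does not follow just from a pointwise curl bound. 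Finally, for \eqref{ve.4} your plan to expand $\int\phi:\ve\otimes\ve$ as a triple integral (two curve parameters and the spatial variable) and Taylor-expand $\phi$ is genuinely different from the paper's approach, which uses the coarea formula over the tube together with a pointwise leading-order description of $\ve$ near $\Gamma$ (Lemma~\ref{P.veptwise}, $\ve \approx \frac{1}{2\pi}\frac{(\gamma(\zeta(x))-x)\times\gamma'(\zeta(x))}{\dist(x,\Gamma)^2}$) to read off the $(I-\tau\otimes\tau)$ tensor by symmetry on each normal disc; your triple-integral route would still need an equivalent local model for the kernel product to identify the tensor, and it is not clear the bookkeeping closes without essentially rederiving that pointwise estimate.
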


Hence, in view of \eqref{ve.0}, \eqref{ve.2} and \eqref{ve.4}, the prototype velocity field $\ve$ satisfies, among others, the concentration condition \eqref{7b} and the two statements of Theorem \ref{T1}. Estimates \eqref{ve.1}--\eqref{ve.2} will be used to bound error terms. In order to verify Theorem \ref{T1}, we thus need to control how much $\ue$ deviates from $\ve$. This is the content of Section \ref{Proof.Theorem2} below. The concentration condition \eqref{ve.0} will be established in Section \ref{S:Flatnorm} below. The remaining statements of Proposition \ref{Prop.ve} are proved in Section \ref{S:prototype}.

Let us now discuss our approach to Theorem \ref{T2}. The hypothesis and the Hamiltonian structure of the Euler equations guarantee that the excess $\exc_{\eps}(\ue,\Lambda)$ is of the order of the error term, that is $\mathcal{O}(|\log\eps|^{-1})$. In view of the main estimate \eqref{main.estimate} of Theorem \ref{T1} and the weak formulation of Lemma \ref{L2} for the binormal curvature flow, the question of how close $\{\Lambda(t)\}_{t\in[0,T]}
$ is to the binormal curvature flow $\{\Gamma(t)\}_{t\in[0,T]}$ amounts thus to the study of stability of the latter. Stability properties of binormal curvature flows have been recently investigated by Smets and the first author. In \cite{JerrardSmets15}, it is shown that for a certain decaying $C^2$ continuation $X$ of $\tau_{\Gamma}$ it holds that
\begin{equation}
\label{rprop}
|\partial_t X\cdot \xi - \grad(\curl X):\xi\otimes \xi|\le K(1-X\cdot \xi),
\end{equation}
for some $K=K(\Gamma)$ and {\em all} unit vector fields $\xi$. This remarkable property is reminiscent of \eqref{7}. The {\em binormal curvature flow defect} of the flow $\{\Lambda(t)\}_{t\in[0,T]}$ is thus controlled by the distance of the tangent fields $\tau_{\Lambda}$ and $\tau_{\Gamma}$. We use this stability estimate to deduce Theorem \ref{T2} from Theorem \ref{T1}.

\section{The Hamiltonian--Poisson structure: Heuristics for the vortex filament conjecture}\label{S.Hamiltonian}

In this section, we review Marsden and Weinstein's interpretation of the Euler equations and the binormal curvature flow as Hamiltonian systems on Poisson manifolds \cite{MarsdenWeinstein83}, see also \cite{Shashikanth12, Khesin12} for more
recent and more general discussions. As opposed to the original paper, where this interpretation is worked out in the language of Lie algebras, our approach is based on vector calculus which allows us to identify the involved ingredients directly with various quantities that appear in the statements of our theorems. The parallels between the Hamiltonian-Poisson structures can be used to pass to the limit in a formal way. We perform this limit at the end of this section.

%To the best of our knowledge, our perspective is {\em new} in that we first  pass to the limit {\em within} the Hamiltonian-Poisson structure and subsequently compute the binormal curvature flow from the limiting objects. In more traditional approaches, for instance, via the localized induction approximation (``LIA'') of the Euler solutions, see, e.g,  \cite{Khesin12}, the asymptotic equations are computed first and the Hamiltonian-Poisson structure is identified only subsequently. Indeed, in LIA,  one formally computes the limit in the Biot--Savart kernel and derives the binormal curvature flow as the flow of the limiting velocity field.

% Thus, one first passes to the limit and subsequently identifies the Hamiltonian-Poisson structure. Instead, we formally pass to the limit {\em within} the Hamiltonian-Poisson structure and subsequently find the binormal curvature flow.

We recall that we need three ingredients to constitute a Hamiltonian-Poisson system:
\begin{itemize}
\item a differentiable manifold $\M$;
%\item a metric tensor $\g$ on $\M$, which makes $(\M, \g)$ a Riemannian manifold;
%\item a skew-symmetric operator $\J$ that acts on the tangent space $\T_p \M$, i.e., $\J: \T_p\M\to \T_p\M$ with $\g_p(\delta p, \J \delta q) + \g_p(\J\delta p,\delta q)=0$;
\item a Poisson bracket on $\M$, that is, a bilinear map that takes functions on $\M$ to functions on $\M$,  is skew-symmetric, i.e., $\{F,G\}+ \{G,F\}=0$, satisfies the Jacobi identity, i.e., $\{E, \{F,G\}\} + \{F, \{G,E\}\} + \{ G, \{E, F\}\}=0$, and obeys the Leibniz rule, i.e., $\{EF,G\} = \{E,G\} F + \{F,G\}E$;
\item and a function $H: \M\to \R$.
\end{itemize}
We call a dynamical system $p(t)$ in $\M$ that is given by the differential equation
\begin{equation}\label{100}
\frac{d}{dt} F(p) = \{F,H\}(p)
\end{equation}
for all $F:\M\to \R$ a {\em Hamiltonian-Poisson system} on $\M$. The function $H$ is referred to as the {\em Hamiltonian}. 
Observe that the Hamiltonian is preserved during the evolution, because
\[
\frac{d}{dt} H(p) \stackrel{\eqref{100}}{=}  \{ H,H\}(p)= 0
\]
by the skew-symmetry of the Poisson bracket.

In some situations, the Poisson bracket is induced by a skew-symmetric bilinear form on a Riemannian manifold $\s_p: \T_p\M\times \T_p\M\to \R$ via
\[
\{F,G\}(p)  = \s_p(\gradient F,\gradient G).
\] 
We also recall that the relation between gradients and differentials on Riemannian manifolds is established by the duality
\[
\g_p(\gradient H(p),\delta p)
= \diff H(p) (\delta p) 
\]
for all $\delta p\in \T_p\M$. If $\s_p$ is non-degenerate, $(\M,\s)$ is called a symplectic manifold.

%If $\J$ is an isometry, $\g_p(\J\delta p,\J\delta q) = \g_p(\delta p,\delta q)$, the $\J$-induced skew-symmetric bilinear form
%\[
%\s_p( \delta p,\delta w) = \g_p(\delta p, \J\delta q)
%\]
%is non-degenerate. This is a so-called symplectic form on $(\M,\g)$. We can use the symplectic form to rewrite \eqref{100} as
%\begin{equation}\label{101}
%\s_p\left(\frac{dp}{dt}, \delta p\right) = \diff H(p).\delta p\quad\mbox{for all }\delta p\in \T_p\M.
%\end{equation}
%This shows in particular, that for the study of a Hamiltonian system, it is enough to know the manifold, the Hamiltonian, and the symplectic form.

We first explain the {\em formal} Hamiltonian-Poisson structure of the Euler equations. As we are interested in the situation where the vorticity is concentrated along curves, it is convenient to consider vorticity fields as  main objects.  
We accordingly take the manifold to be the phase space of vorticity fields
\[
\M= \left\{ \omega:\: \exists u\mbox{ s.t.\ } \div u=0\mbox{ and } \omega =  \nabla\times u \right\}. 
% \left\{ \mbox{divergence-free vector fields $\omega$ on $\R^3$ with }\int \omega\, dx=0\right\}.
\]
%{\color{red}C: I would drop the requirement that $u\in L^2$ here and in the tangent space. That gives the purely formal discussion here a rigorous look, which might be misleading. More precisely, I consider $(\M,\g)$ only formally as a Riemannian manifold, don't want to try to make that rigorous.}
%\blue{ my motivation here was to try to clarify the relationship between the phase space
%(of vorticities) and the tangent space (of velocities). This can probably be done without 
%introducing function
%spaces, by arguing that, since vortex filaments are convected by the flow, it is natural to view velocity vector
%fields as ``tangent vectors", and that one therefore should take or invert the curl to pass from 
%the manifold to its tngent space.
%You are welcome to write something of this sort if you like, and to get rid of $L^2$, $H^{-1}$ etc....
%But since both spaces are spaces of divergence-free vector fields, explaining the difference between
%them without referring to function spaces is a little delicate.}
%
%
%
%\blue{ in fact I would not mind deleting a large fraction of what I have written below, 
%(for example the first table)
%as long as we can keep the point that Theorem \ref{T1} gives quantitative information
%relating the Poisson brackets.
%}
For $\omega\in \M$ we will sometimes use the notation
$(\curl)^{-1}\omega = \nabla\times (-\Delta)^{-1}\omega\, (=u)$, as motivated by the Biot--Savart law \eqref{57b}.
Recall that this defines a canonical isomorphism between $\M$ and
the space of divergence-free velocity fields.
Since $\M$ is a linear space, we can identify the tangent space $T_\omega\M$ with
$\M$ itself. It turns out to be more convenient, however, to use the above-mentioned
isomorphism to represent the tangent space as the space of velocity fields
\[
\T_{\omega}\M \cong \left\{   v : \: \nabla\cdot v =0 \,\right\}. 
\]
This point of view is quite natural since vortex filaments are convected by the flow.
Thus, the tangent vector at $t=0$ to a smooth path $\omega(t)$ in $\M$ (with $\omega(0)=\omega$)
will be represented as
$v = (\nabla\times)^{-1}\dot\omega(0)$.
The Poisson bracket for the Euler equation is induced by a skew-symmetric bilinear (symplectic) form on a Riemannian metric. The metric tensor is then chosen to be 
\[
\g_{\omega}(v,w) = \int  v\cdot w \, dx,
\]
and the skew-symmetric bilinear form is
\[
s_{ \omega}(v,w) =   \int \left( v\times w \right)\cdot\omega\, dx.
\]
Finally, the Hamiltonian is the kinetic energy, which, when expressed in terms of the vorticity field is
 $H(\omega)= \frac12\int |(\curl)^{-1} \omega|^2\, dx$, which is just the $H^{-1}$ norm squared.

A short computation shows that the Hamiltonian--Poisson system associated with these ingredients constitutes the Euler equations. Indeed, the definitions imply that 
$\gradient H(\omega) = (\curl)^{-1}\omega =:u$, so
\[
\{F,H\}(\omega) = \int \left( \gradient F \times  u\right)\cdot \omega\, dx,
\]
and thus the elementary identity $A\cdot( B\times C) = -B\cdot (A\times C)$
and the fact that $\gradient F$ is divergence-free imply that 
\[
\{F,H\}(\omega)  
=- \int \gradient F \cdot \left(\omega\times u\right)\, dx =-
\int \gradient F \cdot \left(\omega\times u + \nabla q \right)\, dx 
\]
for a scalar $q$. In particular, since
\[
\omega\times u = u\cdot \nabla u - \nabla (\frac 12 |u|^2) 
\]
we can write 
\[
\omega\times u +\nabla q=  u\cdot\nabla u +\nabla p
\]
for $\nabla p$ chosen so that this whole expression is divergence-free
and  belongs to $T_\omega\M$.
Then we can rewrite the above as
\[
\{F,H\}(\omega)  
= -  \diff F(\omega) (u\cdot\nabla u + \nabla p).
\]
On the other hand, since we have identified $T_\omega\M$ with $(\curl)^{-1}\M$,
\[
\frac{d}{dt} F(\omega) = 
\diff F(\omega)\left((\curl)^{-1}\partial_t \omega \right)
=
\diff F(\omega)\left( \partial_t u \right).
\]
It follows that the evolution \eqref{100} takes the form
\[
\diff F(\omega) \left(\partial_t u + u\cdot\nabla u + \nabla p \right)=0.
\]
Since $F$ was arbitrary, and because our choice of spaces enforces the incompressibility constraint,
this yields the equations
\eqref{1}, \eqref{2} modulo the scaling factor which 
we will see later arises from a natural rescaling of the Hamiltonian.

In the following, we try to convince the reader that binormal curvature flow naturally arises as the motion of vortex filaments if the vorticity concentrates along a curve. In this case, the vorticity field is tangential to such a curve and the manifold of vorticity fields degenerates to
\[
\M = \left\{\mbox{ oriented, closed curves $\Gamma$ in }\R^3\right\}.
\]
Infinitesimal variations can be represented by vector fields on these curves, and thus, we identify 
\[
\T_{\Gamma}\M = \left\{\mbox{vector fields $\varphi$ on }\Gamma\right\}.
\]
These vector fields are the velocity fields which transport the curve, and thus, the $L^2$ metric tensor becomes
\[
\g_{\Gamma}(\varphi,\psi) = \int_{\Gamma}\varphi\cdot \psi\, d\Ha^1.
\]
 Likewise, the limiting skew-symmetric bilinear form reads
\[
\s_{\Gamma}(\varphi,\psi) =  \int _{\Gamma} \varphi\times\psi\cdot \tau_{\Gamma}\, d\Ha^1,
\]
where $\tau_{\Gamma}$ denotes the positively oriented unit  tangent on $\Gamma$. Finally, if the vorticity sharply concentrates around $\Gamma$ and vorticity is the only source for the kinetic energy, the Hamiltonian becomes the length of the curve, $H(\Gamma) = |\Gamma|$.

We now compute the Hamiltonian-Poisson system generated by these ingredients. If $\gamma: [0,L)\to \R^3$ is an arc-length parametrization of $\Gamma$, then
$
\diff H(\Gamma)(\varphi )= -\int_0^L \gamma''(s)\cdot \varphi(s)\, ds
$,
and thus $\gradient H(\Gamma) = -\gamma''$. It thus follows that 
\[
\{F,H\}(\Gamma) = - \int_0^L \left(\gradient F\times \gamma''\right)\cdot \gamma'\, ds =  \int_0^L \left(\gamma'\times \gamma''\right)\cdot \gradient F\, ds .
\]
Hence, using similar arguments as above, the Hamilton-Poisson system \eqref{100} becomes
\[
\diff F(\Gamma) \left(\partial_t \gamma - \gamma'\times \gamma''\right) = 0.
\]
Since $F$ was arbitrary, this shows that the curve evolves by binormal curvature flow.

%%%%%%%%%%%%%%%%%%%%%%%%%%%%%%%%%%%%%%%%%%%%%%%%%%%%% More Heuristics %%%%%%%%%%%%%%%%%%%%%%%%%%%%%%%%%%%%%%%%%%%%%%%%%%%%%%%%%%%
%\subsection{More Heuristics}

We summarize the above discussion in the following table, where we introduce
subscripts  ${}_E$ and ${}_B$ to distinguish between (some) objects
that relate to the Euler equation and the binormal curvature flow, respectively.

\begin{table}[htp]
\small 
%\caption{default}
\begin{center}
\begin{tabular}{@\quad l@\quad|@\quad l@\quad}
Euler & BCF\\
\hline

$\displaystyle\M_E = \nabla\times \left\{ u:\: \nabla\cdot u=0\right\} $
&$\displaystyle \M_B = \left\{ \mbox{oriented curves $\Gamma$ in $\R^3$} \right\}$\\

$\displaystyle T_\omega \M_E =  \left\{ v:\: \nabla\cdot v=0\right\}$
&$\displaystyle T_\Gamma \M_B = \left\{ \mbox{normal vector fields along $\Gamma$} \right\}$ \\

$\displaystyle g_\omega(v,w) = \int v\cdot w \ dx$ &
$\displaystyle g_\Gamma(v,w) = \int_{\Gamma} v\cdot w \, d\calH^1 $
\\

$\displaystyle H_E(\omega) = \frac12\int |(\curl)^{-1}\omega|^2\, dx$
\qquad\qquad
&
$\displaystyle H_B(\Gamma) = |\Gamma|$
\\

$\displaystyle s_\omega(v,w) = \int (v\times w)\cdot \omega \, dx$ &
$\displaystyle s_\Gamma(v,w) = \int_{\Gamma}(v\times w)\cdot \tau\, d\calH^1 $
\\
$\displaystyle \{ F,G\}(\omega) = s_\omega( \gradient F,  \gradient G )$
\qquad\qquad
&
$\displaystyle \{ F,G\}(\Gamma) = s_\Gamma( \gradient F,  \gradient G )$
\end{tabular}
\end{center}
\label{default}
\end{table}%

We wish to make the above heuristic argument a little 
more convincing by comparing 
the above Hamiltonian-Poisson structures,
including the Poisson brackets,
at some $\omega\in \M_E$ and $\Gamma\in \M_B$
such that $\omega$ is $\ep$-concentrated around $\Gamma$.
%It turns out that in this situation, the Euler Hamiltonian  blows up like $|\log \ep|$,  see Lemma  ... below, so that it is useful to  define the rescaled Hamiltonian
In this situation, it turns out to that $H_E(\omega) \ge \frac {|\log\ep|}{4\pi}H_B(\Gamma) -  \mathcal{O}(1)$;
this is a consequence of Theorem \ref{T1} above.
It is therefore natural to
rescale the Euler Hamiltonian by introducing
\[
H_E^\ep (\omega) = \frac {4\pi}{|\log\ep|} H_E(\omega).
\]
(Note that the 
rescaling is exactly the one we obtain by time rescaling.) To compare the Poisson brackets, we need to be able to
(approximately) identify functions $F,G$
on $\M_E$ with 
functions on $\M_B$.
It is not clear how to do this in general, but  for certain functions there are
natural choices. 
In particular:
\begin{itemize}
\item We would like the rescaled Euler Hamiltonian $H^\ep_E$ to correspond to the
BCF Hamiltonian $H_B$.

\item Given $\phi \in C^2_c(\R^3;\R^3)$, we can define linear maps $\ell_\phi$
on the two manifolds which are natural counterparts of each other:
\[
\ell_{E,\phi}(\omega) := \int \phi \cdot \omega \ dx \ ,
\quad
\qquad
\ell_{B,\phi}(\Gamma) := \int_\Gamma \phi \cdot \tau \ d\calH^1 \ .
\]
%\hspace{-3em} It is then straightforward to check that
\end{itemize}
We can then supplement the above table with a couple of new entries. It is straightforward to
check the following, and the proofs are essentially contained in \eqref{8} and \eqref{7} above.

\begin{table}[htp]
\small 
%\caption{default}
\begin{center}
\begin{tabular}{@\quad l@\quad|@\quad l@\quad}
Euler &BCF\\
\hline
$\displaystyle \gradient \ell_{\phi} = \nabla \times \phi \hspace{5em}\ $&$\displaystyle 
\gradient \ell_{\phi} = \tau \times (\nabla\times \phi)$ \\

$\displaystyle \{ H^\ep_E, \ell_{\phi} \}(\omega)$
&$\displaystyle \{ H_B, \ell_{\phi} \}(\Gamma)$\\
$\displaystyle\qquad
= \frac{4\pi}{|\log\ep|} \int \nabla(\nabla\times\phi) :  u\otimes u\, dx$
&$\displaystyle \qquad
=
 \int_\Gamma \nabla(\nabla\times\phi) : (I-\tau \otimes\tau) \ d\calH^1$
\end{tabular}
\end{center}
\label{default2}
\end{table}%

In this context, our second main result, Theorem \ref{T1}, 
strengthens and refines  the heuristic arguments described above by
providing a precise, quantitative version of the statement that
if $\omega \in \M_E$
is concentrated % (in a percise, and rather weak, sense, see \eqref{ec} below) 
in an $\ep$
neighborhood of a curve $\Gamma\in \M_B$
and if $H_E^\ep(\omega)$ is close to $H_B(\Gamma)$, then
$\{ H^\ep_E, \ell_{\phi} \}(\omega)$ is close to
$\{ H_B, \ell_{\phi} \}(\Gamma)$.

%%%%%%%%%%%%%%%%%%%%%%%%%%%%%%%%%%%%%%%%%%%%%%%%%%%%%%%%%%%%%%%%%%%%%%%%%%%%%%%%%%%%%%%%%%%%%%%%%%%%%%%
\section{Proofs}\label{S:Proofs}
%%%%%%%%%%%%%%%%%%%%%%%%%%%%%%%%%%%%%%%%%%%%%%%%%%%%%%%%%%%%%%%%%%%%%%%%%%%%%%%%%%%%%%%%%%%%%%%%%%%%%%%%

The remainder of the paper is devoted to the rigorous justification of Theorems \ref{T2} and \ref{T1}. 
We start with some preliminaries on geometric properties of non-self-intersecting $C^2$ curves and derive some properties of the flat norm.

%
%
%
%
%
%%%%%%%%%%%%%%%%%%%%%%%%%%%%%%%%%%%%%%%%%%%%%%%%%%%%%%%%%%%%%%%%%%%%%%%%%%%%%%%%%%%%%%%%%%%%%%%%%%%%%%%%%
%\section{Preliminaries} \label{S:Preliminaries}
%%%%%%%%%%%%%%%%%%%%%%%%%%%%%%%%%%%%%%%%%%%%%%%%%%%%%%%%%%%%%%%%%%%%%%%%%%%%%%%%%%%%%%%%%%%%%%%%%%%%%%%%%
%The goal of this section is to provide some tools...

%
%%%%%%%%%%%%%%%%%%%%%%%%%%%%%%%%%%%%%%%%%%%%%%%%%%%%%%%%%%%%%%%%%%%%%%%%%%%%%%
%\subsection{The Biot-Savart kernel}
%%%%%%%%%%%%%%%%%%%%%%%%%%%%%%%%%%%%%%%%%%%%%%%%%%%%%%%%%%%%%%%%%%%%%%%%%%%%%%
%
%
%\blue{We should look at the proofs later in the paper and figure out what we need. But I'd say....
%in many places, instead of starting with $\omega$ and recovering $u$ by the Biot-Savart law, 
%we can start with $u$ and find $\omega$ by taking the curl, which is simpler. The one place
%where we indisputably need the  Biot-Savart kernel is in constructing $\ve$, but maybe we can just 
%state whatever we need for that situation, rather than proving general results..... So I'd prefer to
%give only as much general theory as we need to make the rest of the paper intelligible.
%Whatever this is can be restored from some earlier draft of the paper.}

%%%%%%%%%%%%%%%%%%%%%%%%%%%%%%%%%%%%%%%%%%%%%%%%%%%%%%%%%%%%%%%%%%%%%%%%%%%%%
%\subsection{Geometric properties of non-self-intersecting $C^2$ curves.}
\subsection{Curves for which $\|\kappa^*\|_{L^{1,\infty}}<\infty$.}
%%%%%%%%%%%%%%%%%%%%%%%%%%%%%%%%%%%%%%%%%%%%%%%%%%%%%%%%%%%%%%%%%%%%%%%%%%%%%

Assume that $\Gamma$ is a closed Lipschitz curve  such that
\begin{equation}
|\Gamma|=1, \qquad \mbox{ and } \quad \|\kappa^*_\Gamma \|_{L^{1,\infty}}<\infty.
\label{Gamma1}\end{equation}
In particular, this implies that $r_\gamma(s)>0$ a.e.  for any arclength parametrization $\gamma$. We will use the notation
\[
\mathcal T := \{ \gamma(s) + v :  s\in \R/\Z,  v\in \R^3, v\cdot \gamma'(s)=0, |v|<\frac 14 r(s)\} .
\]
Thus $\mathcal T$ is a tube of varying thickness around $\Gamma$. 
In general, $\mathcal T$ need not be open, but it is straightforward to 
check that it is measurable.
We  record some of its properties:

\begin{lemma}
For every $x\in \mathcal T$, there exists a unique closest point $P(x)$ in $\Gamma$, characterized by
$\dist (x,\Gamma) = |x- P(x)|$, and we may  define
$\zeta:\mathcal T\to \R/\Z$ by requiring that
\[
P(x) = \gamma(\zeta(x)) \qquad\mbox{ for all }x\in \mathcal T\, ,
\]
If $\gamma$ is $C^2$ then $\zeta$ is $C^1$ in $\mathcal T$, with
\begin{equation}
\grad \zeta (x) = \frac{\gamma'(\zeta(x))}{1 - \left(x-\gamma(\zeta(x))\right)\cdot \gamma''(\zeta(x))}.
%Since $|x-\gamma(\zeta(x))| = \dist(x,\Gamma)$ and $|\gamma''(s)| \le \kappa^*(x)$ for all $s$,
%\Big| |\nabla \zeta(x)|-1 \Big|
 %\lesssim \dist(x,\Gamma)\kappa^*(\zeta(x)).
\label{nablazeta}\end{equation}
More generally, if we only assume \eqref{Gamma1}, then
$\zeta$ is locally Lipschitz in $\mathcal T$, and 
\begin{equation}
\left | \frac 1 {|\nabla \zeta(x)|} - 1 \right| 
\le \frac {\dist(x,\Gamma)}{r(\zeta(x))} \qquad \mbox{a.e. }  x\in \mathcal T.
\label{loclip}\end{equation}
\label{L9cis}\end{lemma}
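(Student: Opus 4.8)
The plan is to first establish the existence and uniqueness of the nearest-point projection $P(x)$ for $x \in \mathcal{T}$, then deduce the regularity statements for $\zeta$. For the existence, note that $\Gamma$ is compact, so for each $x$ the function $y \mapsto |x-y|$ attains its minimum over $\Gamma$; the content is \emph{uniqueness} of the minimizer when $\dist(x,\Gamma) < \frac14 r(\zeta(x))$. The key geometric input is the definition of the security radius \eqref{r.def}: if $\gamma(s_0)$ is a nearest point to $x$ and $|v| := |x - \gamma(s_0)| < \frac14 r(s_0)$, then for $|h| \le r(s_0)$ the tangent estimate $|\gamma'(s_0+h) - \gamma'(s_0)| \le |h|/r(s_0)$ forces $\gamma(s_0+h)$ to move away from $x$ roughly along the direction $-v$ is orthogonal to, giving $|x - \gamma(s_0+h)|^2 \ge |v|^2 + c|h|^2$ for a quantitative constant, while for $|h| \ge r(s_0)$ the separation estimate $|\gamma(s_0+h) - \gamma(s_0)| \ge \frac12 r(s_0)$ combined with $|v| < \frac14 r(s_0)$ keeps $\gamma(s_0+h)$ at distance bounded below. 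First I would carry out this two-scale comparison carefully to conclude that the minimizer is unique, and moreover that the map $x \mapsto \zeta(x)$ so defined is well-behaved.

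For the smooth case, I would argue via the implicit function theorem. The stationarity condition characterizing $\zeta(x)$ is $(x - \gamma(\zeta(x))) \cdot \gamma'(\zeta(x)) = 0$, i.e. $F(x,\sigma) := (x-\gamma(\sigma))\cdot\gamma'(\sigma) = 0$ at $\sigma = \zeta(x)$. Differentiating, $\partial_\sigma F = -|\gamma'(\sigma)|^2 + (x-\gamma(\sigma))\cdot\gamma''(\sigma) = -1 + (x-\gamma(\sigma))\cdot\gamma''(\sigma)$, which is nonzero because $|x - \gamma(\zeta(x))| < \frac14 r(\zeta(x)) \le \frac14/\kappa(\zeta(x))$ by \eqref{ksgpp}, so $|(x-\gamma)\cdot\gamma''| < \frac14 < 1$. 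Hence the implicit function theorem gives $\zeta \in C^1$ with $\grad\zeta(x) = -\grad_x F / \partial_\sigma F = \gamma'(\zeta(x))/(1 - (x-\gamma(\zeta(x)))\cdot\gamma''(\zeta(x)))$, which is \eqref{nablazeta}. From this formula $|\grad\zeta(x)| = 1/|1 - (x-\gamma(\zeta(x)))\cdot\gamma''(\zeta(x))|$, so $\bigl| 1/|\grad\zeta(x)| - 1\bigr| = |(x-\gamma(\zeta(x)))\cdot\gamma''(\zeta(x))| \le \dist(x,\Gamma)\,\kappa(\zeta(x)) \le \dist(x,\Gamma)/r(\zeta(x))$ using \eqref{ksgpp} again, which is \eqref{loclip} in the $C^2$ setting.

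For the general Lipschitz case under only \eqref{Gamma1}, the derivative $\gamma''$ need not exist, so the implicit function theorem is unavailable; this is the main obstacle. The plan is to approximate: mollify $\gamma$ to get $\gamma_\delta \in C^\infty$, note that the security radii converge appropriately (or at least stay bounded below locally), apply the $C^2$ result to each $\gamma_\delta$ to get uniform local Lipschitz bounds on $\zeta_\delta$ via \eqref{loclip}, and pass to the limit using Arzel\`a--Ascoli and the uniqueness of the projection established in the first step to identify the limit as $\zeta$. The estimate \eqref{loclip} is stable under this limiting procedure since it only involves $\dist(x,\Gamma)$ and $r(\zeta(x))$, which are lower semicontinuous / continuous in the relevant sense. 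Alternatively, and perhaps more cleanly, one can work directly: local Lipschitz continuity of $\zeta$ follows from the quadratic growth estimate $|x-\gamma(\sigma)|^2 \ge \dist(x,\Gamma)^2 + c|\sigma - \zeta(x)|^2$ established in step one (a standard argument showing the projection onto a set with positive reach is Lipschitz on tubular neighborhoods), and then \eqref{loclip} is obtained at points of differentiability of $\zeta$ by a difference-quotient computation comparing $\zeta(x+tw)$ with $\zeta(x)$ along directions $w$, using that $\gamma$ is differentiable a.e.\ with $|\gamma'|=1$. I expect the bookkeeping in this last step — making the a.e.\ differentiability of $\gamma$ interact correctly with the a.e.\ differentiability of $\zeta$ — to be the only genuinely delicate point.
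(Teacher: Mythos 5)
Your treatment of existence/uniqueness of $P(x)$ and the $C^2$ case via the implicit function theorem applied to $F(x,\sigma) = (x-\gamma(\sigma))\cdot\gamma'(\sigma)$ matches the paper exactly, including the observation that $|\partial_\sigma F| \ge 3/4$ in $\mathcal T$ because $|x-\gamma(\zeta(x))|\,|\gamma''(\zeta(x))| < \tfrac14$. That part is sound.

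For the general Lipschitz case, however, your proposed mollification route has a real gap: the security radius $r_\gamma$ defined in \eqref{r.def} is not lower semicontinuous under mollification, and mollifying an arc-length parametrization destroys the $|\gamma'|=1$ normalization, so the mollified $r_{\gamma_\delta}$ are not controlled from below by $r_\gamma$. Worse, a Lipschitz curve satisfying only \eqref{Gamma1} can have $r_\gamma(s) = 0$ on a set of positive measure (near corners), while a mollification is $C^\infty$ with $r_{\gamma_\delta}$ strictly positive everywhere but possibly tending to zero as $\delta \to 0$ in an unquantified way. Passing to the limit in \eqref{loclip} therefore does not work in the way you assert. The ``direct'' alternative you sketch is the correct one and is what the paper does, but the two-sided estimate \eqref{loclip} is not merely a.e.-differentiability bookkeeping. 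The lower bound $1/|\nabla\zeta(x)| \ge 1 - \dist(x,\Gamma)/r(\zeta(x))$ does come from the Lipschitz estimate as a $\liminf$ over all approach directions. But the upper bound $1/|\nabla\zeta(x)| \le 1 + \dist(x,\Gamma)/r(\zeta(x))$ requires a specific idea you do not mention: at an $x\in \mathcal T$ where $\zeta$ is differentiable \emph{and} $\mathcal T$ has Lebesgue density $1$ (both hold a.e.), one must select a sequence $y_k \to x$ in $\mathcal T$ with $(y_k-x)/|y_k-x| \to \gamma'(\zeta(x))$, i.e. approaching tangentially. Writing $h_k = \zeta(y_k)$ and decomposing $\gamma = (\gamma_\perp, \gamma_\parallel)$ in the adapted frame, the tangent Lipschitz bound $|\gamma_\perp'(h)| \le |h|/r$ and $\gamma_\parallel'(h) \ge 1 - h^2/(2r^2)$ give
\[
1 \le \lim_{k\to\infty} \frac{h_k}{|y_k - x|}\Big(1 + \frac{\dist(x,\Gamma)}{r(\zeta(x))}\Big),
\]
from which $|\nabla\zeta(x)| \ge |\nabla\zeta(x)\cdot\gamma'(\zeta(x))| \ge (1 + \dist(x,\Gamma)/r(\zeta(x)))^{-1}$. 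Without the density-$1$ argument you have no way to guarantee a tangential approach sequence inside $\mathcal T$, and without taking the tangential direction specifically you only recover the easy lower bound. This is the essential content of the proof you have skipped.
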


A similar function $\zeta$, but for smoother $\Gamma$ and defined on a tube of uniform diameter, can be found in \cite[Prop.\ 4]{JerrardSmets15}.

\begin{proof}
Fix $x\in \mathcal T$; then there exist $s\in \R/\Z$ and $v\in\R^3$
such that
\begin{equation}
x = \gamma(s) + v, \qquad v\cdot \gamma'(s) = 0, \qquad |v|< \frac 14 r(s).
\label{zeta1}\end{equation}
We may assume, by changing variables and reparametrizing $\gamma$, that $s=0$,
$\gamma(0) = 0\in \R^3$ and  $\gamma'(0) = (0,0,1)$, and we will write
\[
\gamma = (\gamma_\perp, \gamma_\parallel) \in \R^2\times \R,
\qquad  x = v= (v_\perp, 0), \qquad r_0  = r(0).
\] 
We will show that $P(x)$ is well-defined by proving that 
$\gamma(0)$ is the unique closest point in $\Gamma$ to $x$,
or in other words that 
\begin{equation}
\mbox{ if $0<|h|\le 1/2$, \ \  then $|x-\gamma(h)| > |v| = |v_\perp|$.} 
%|x - \gamma(s)| = |y| < \frac 14 r(s).
\label{closestpoint}\end{equation}
If $|h|\ge r_0$, then the definition of $r(0)$ implies that $|\gamma(h)|
= |\gamma(h) - \gamma(0)| \ge \frac 12 r_0$, so
by \eqref{zeta1},
\begin{equation}
|\gamma(h) - x| \ge |\gamma(h)| - |v| \ge \frac 14 r_0 > |v|.
\label{ghmx}\end{equation}
We thus assume $|h| <r_0$. By the definition of $r(\cdot)$,
\begin{equation}
\gamma_\parallel'(h)  = \gamma'(h)\cdot \gamma'(0) =  1- \frac 12 |\gamma'(h) - \gamma'(0)|^2
\ge 1 - \frac{h^2}{ 2 r_0^2} 
\label{gprime1}\end{equation}
for {\em a.e. }$|h|\le r_0$.  It follows that 
\begin{equation}
|\gamma_\perp' (h)|  = (1 - |\gamma_\parallel ' |^2)^{1/2}  \le  \frac{|h|}{r_0} .
\label{gprime2}\end{equation}
For $0<h<r_0$, we integrate these inequalities to obtain
\begin{equation}
h\ge \gamma_\parallel(h) \ge h(1 - \frac {h^2}{6 r_0^2}), \qquad |\gamma_\perp(h)|\le  \frac{h^2}{2r_0}.
\label{whereisgamma}\end{equation}
Thus
\begin{equation}
|\gamma(h) - x|^2 
%= (\gamma_\parallel)^2 + |v^\perp - \gamma_\perp(h)|^2
\ge  \begin{cases}
h^2(1 - \frac {h^2}{6 r_0^2})^2 + (|v_\perp| - \frac {h^2}{2r_0})^2 
\quad&\mbox{ if } \frac {h^2}{2r_0} \le |v_\perp| , 
 \\
h^2(1 - \frac {h^2}{6 r_0^2})^2  
&\mbox{ if  not} .
\end{cases}
\label{distxgamma}\end{equation}
It follows  from this by elementary calculations, using the fact that $|v_\perp|< \frac 14r_0$,  that $|\gamma(h) - x|^2 > |v_\perp|^2$. The same estimate holds for $-r_0<h<0$, by essentially the same argument,
completing the proof of \eqref{closestpoint}.

The uniqueness of the closest point implies that $P(\cdot)$ and $\zeta(\cdot)$ are continuous in $\mathcal T$.
Indeed,  if $x_k\rightarrow x$ in $\mathcal T$ and $P(x_k)\to y$, then it is clear that $y\in \Gamma$ and that
$\dist(x,y) =  \lim_k \dist(x_k, \Gamma)  = \dist(x, \Gamma)$,
and hence that $y=P(x)$.

To verify that $\zeta$ is Lipschitz, consider $y\in \mathcal T$ near $x$, and let $h := \zeta(y) $.
If $y$ is sufficiently close to $x$, then
$h$ satisfies 
\[
|h| <r_0, \qquad y = \gamma(h) + w, \qquad w\cdot \gamma'(h) = 0.
\]
We would like to estimate $|h| = |\zeta(x)-\zeta(y)|$ in terms of $|x-y|$.
To do this,
note that
$|y-x| \ge |\gamma'(h)\cdot(y-x)| = |\gamma'(h)\cdot(\gamma(h)-x)|$.
By writing
\[
\gamma'\cdot (\gamma-x )
= (\gamma_\perp', \gamma_\parallel') \cdot (\gamma_\perp-v_\perp, \gamma_\parallel)
\]
and using the estimates for various components of $\gamma$ and $\gamma'$ in 
\eqref{gprime1}, \eqref{gprime2}, \eqref{whereisgamma}, we find that
\begin{equation}
 |h|( 1 - \frac {|v_\perp|}{r_0} -  \frac 76 \frac {h^2}{r_0^2} )
 \le |x-y| % \le |h|(1+ \frac{|v_\perp|}{r_0} + \frac 12 \frac {h^2}{r_0^2})
\label{lipest}\end{equation}
as long as $|h|<r_0$.
Since $|v_\perp|< \frac 14 r_0$, it follows that 
$|x-y| \ge \frac 12 |h|$ as long as $|h| < \frac 14 r_0$.
Because $x$ was arbitrary, this shows that for $x,y\in \mathcal T$,
\[
|x-y| \ge \frac  12 |\zeta(x)-\zeta(y)| \mbox{ as long as } |\zeta(x)-\zeta(y)|\le 
\frac 14 r(\zeta(x)) .% \dist(x, \Gamma).
\]
It follows that $\zeta$ is locally Lipschitz in $\mathcal T$
and hence {\em a.e.} differentiable in $\mathcal T$. 

Next, at every point of differentiability, it follows from \eqref{lipest} that
\[
\frac 1{|\nabla\zeta(x)|}  \ = \ \liminf_{y\to x} \frac{|x-y|}{|\zeta(x) - \zeta(y)|} 
\ge  1 -  \frac{ |v_\perp|}{r_0}  = 1 - \frac{\dist(x,\Gamma)}{r(\zeta(x))}.
\]
On the other hand, if $\zeta$  is differentiable at $x$ and
$\mathcal T$ has density $1$ at $x$ in the sense that
$\lim_{r\searrow 0}\frac{|B_r(x)\cap \mathcal T|}{|B_r(x)|} =1$, then there exists a sequence $y_k\to x$ 
such that, writing $h_k := \zeta(y_k)$ and still assuming $\zeta(x)=0$,
\begin{align*}
1 = \lim_{k\to \infty}\frac{ \gamma'(0) \cdot (y_k-x)}{|y_k-x|} 
&= \lim_{k\to \infty}
\frac{ \gamma'(h_k) \cdot (y_k-x)}{|y_k-x|} \\
&= \lim_{k\to \infty}
\frac{ \gamma'(h_k) \cdot (\gamma(h_k)-x)}{|y_k-x|}.
\end{align*}
Thus, again decomposing $\gamma$ and $\gamma'$ and using 
\eqref{gprime1}, \eqref{gprime2}, \eqref{whereisgamma}, 
we check that
\[
1 
\le \lim_{k\to \infty} \frac{ \gamma_{\parallel}'(h_k) + |\gamma_\perp'(h_k)| \ (|\gamma_\perp(h_k)| + |v_\perp|)}{|y_k-x|}
= 
\lim_{k\to \infty} \frac {h_k}{|y_k-x|} \left( 1 + \frac {|v_\perp|}{r_0}\right)
\]
which implies that $|\nabla \zeta(x)|^{-1} \le 1+\frac{|v_\perp|}{r_0}= 1 + \frac{\dist(x,\Gamma)}{r(\zeta(x)}$. Thus we have proved \eqref{loclip}.

Finally, if $\gamma$ is $C^2$, then it follows from what we have said above 
(which shows that $\zeta(x) = s$ when \eqref{zeta1} holds) that
\[
\varphi(x, \zeta(x)) = 0 \qquad \mbox{ for }
\varphi(x,t) = (x-\gamma(t))\cdot \gamma'(t).
\]
Because $|\gamma''(s)|\le \frac 1 {r(s)}$, the definition of $\mathcal T$ implies that
$\partial_t \varphi(x,t) \le - 3/4$ everywhere in $\mathcal T$.
Thus the implicit function theorem implies that $\zeta$ is $C^1$ and that
\[
\gamma'_i(\zeta(x)) - \left[1 - \left(x-\gamma(\zeta(x))\right)\cdot \gamma''(\zeta(x))\right]\partial_i\zeta(x) = 0.
\]
We obtain \eqref{nablazeta} by rearranging.

\end{proof}

%

%\begin{lemma}
%If $x\in \mathcal T$, then\begin{equation}\label{14d}
%|x - \gamma(s)|\gtrsim  \left\{ \begin{array}{ll} (|x - \gamma(\zeta(x))|^2 + |\zeta(x)-s|^2)^{1/2} &\mbox{for all } |\zeta(x)-s| \le r(\zeta(x)) \vspace{0.5em}\\
%r(\zeta(x))&\mbox{for all } |\zeta(x)-s| \ge r(\zeta(x)) .\end{array}\right.
%\end{equation}
%\label{L9bis}\end{lemma}
%
%\begin{proof}
%This follows from arguments in the proof of Lemma \ref{L9cis}.
%
%As in that proof, we assume that $\zeta(x)=0$, and we write $r_0 := r(0)$.
%
%For $|s|\ge r_0$, we proved in \eqref{ghmx} that 
%$|x - \gamma(s)| \ge  \frac{r_0}4$, which yields
%the second assertion in \eqref{14d}.
%
%For $|\gamma(x) - s| \le r(\zeta(x))$, the first inequality  in \eqref{14d} can be
%deduced from \eqref{distxgamma} by elementary computations.
%\end{proof}

The following Lemma is an important reason for the relevance of the weak $L^1$ norm
of $\kappa^*$.

\begin{lemma}
There exists an absolute constant $C$ such that for every $x\in \R^3$ and every $r>0$,
\begin{equation}
\left| \{ s\in \R/\Z : |\gamma(s) - x| < r \} \right| \le C  r \| \kappa^*\|_{L^{1,\infty}} .
\label{linear.est}\end{equation}
\label{L.linear}\end{lemma}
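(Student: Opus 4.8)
The plan is to estimate the measure of the set $S_{x,r} := \{ s \in \R/\Z : |\gamma(s) - x| < r\}$ by splitting $S_{x,r}$ into "connected components" (arcs of $\R/\Z$) and controlling both the number of such components and the length of each one. The key structural fact, coming from the definition \eqref{r.def} of the security radius, is that if two points $\gamma(s_1), \gamma(s_2)$ with $|s_1 - s_2| \ge \max(r(s_1), r(s_2))$ are both within distance $r$ of $x$, then $|\gamma(s_1) - \gamma(s_2)| < 2r$, which by the first clause in \eqref{r.def} forces $|s_1 - s_2|$ to be comparable to $r(s_i)$ in a controlled way. Heuristically: on scales larger than the local security radius the curve cannot "come back" near $x$ without its parameter having moved by a comparable amount, so the number of returns is bounded; and on scales smaller than the security radius the curve is nearly a straight line (second clause in \eqref{r.def}), so a single near-straight arc spends length $\lesssim r$ inside a ball of radius $r$.

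\textbf{Main steps.} First, I would dispose of the regime $r \gtrsim 1$ trivially, since $|S_{x,r}| \le 1 \le C r \|\kappa^*\|_{L^{1,\infty}}$ once $\|\kappa^*\|_{L^{1,\infty}} \gtrsim 1$ (note $\|\kappa^*\|_{L^{1,\infty}} \ge 1$ always, because $|\{s : \kappa^*(s) \ge \sigma\}| = 1$ for small $\sigma$ and $r(s) \le |\Gamma| \le 1$ so $\kappa^*(s)\ge 1$... actually one should check the precise normalization, but a lower bound of this type holds). So assume $r$ small. Second, I would show that $S_{x,r}$ meets the sublevel set $\{s : r(s) \ge 4r\}$ in at most a bounded number $N$ of arcs: if $\gamma(s)$ is within $r$ of $x$ and $r(s) \ge 4r$, then for any $s'$ with $|s - s'| \ge r(s)$ the first clause of \eqref{r.def} gives $|\gamma(s') - \gamma(s)| \ge r(s)/2 \ge 2r$, so $\gamma(s') \notin B_r(x)$; hence each such $s$ has a neighborhood of radius $r(s) \ge 4r$ free of other points of $S_{x,r}\cap\{r \ge 4r\}$ lying "far" from it — a packing/covering argument then bounds the contribution of this part of $S_{x,r}$ by $\lesssim r$ times the number of "well-separated clusters," which in turn I bound using that within a single cluster (parameters within $2r(s)$ of each other) the curve is nearly straight and spends length $\lesssim r$ in $B_r(x)$. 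Third, the remaining part $S_{x,r} \cap \{s : r(s) < 4r\}$ is contained in $\{s : \kappa^*(s) > 1/(4r)\}$, whose measure is $\le 4r\,\|\kappa^*\|_{L^{1,\infty}}$ directly by the definition \eqref{kstarfinite} of the weak-$L^1$ norm. Adding the two contributions yields \eqref{linear.est}.

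\textbf{Main obstacle.} The delicate point is the second step: converting the "at scales above $r(s)$ the curve escapes $B_r(x)$" observation into a genuine bound on how many disjoint arcs of $S_{x,r}$ there can be, and on the total length contributed by arcs lying in $\{r(s) \ge 4r\}$. One has to be careful that $r(\cdot)$ itself varies along the curve, so the "clusters" are not of uniform size; I expect to handle this by a Vitali-type selection — pick a maximal collection of points $s_i \in S_{x,r}\cap\{r \ge 4r\}$ with $|s_i - s_j| \ge r(s_i)$ (ordered by decreasing $r(s_i)$), note the balls $\{|s - s_i| < r(s_i)/5\}$ say are disjoint while the balls $\{|s-s_i|< 2r(s_i)\}$ cover the set, and observe each center forces a distinct "passage" of $\Gamma$ through $B_r(x)$ from a direction that the straightness estimate (second clause of \eqref{r.def}, integrated as in \eqref{whereisgamma}) confines to an arc of parameter-length $\lesssim r$. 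The disjointness of the small balls, all of radius $\ge \tfrac{4r}{5}$ inside $\R/\Z$, bounds their number by $\lesssim 1/r$; but since each contributes only length $\lesssim r$ to $S_{x,r}$, and the number of genuinely distinct passages is actually $O(1)$ per unit of $\|\kappa^*\|_{L^{1,\infty}}$ (a near-straight arc can enter and leave $B_r(x)$ only once, and self-intersections — the only way to get multiple passages — are controlled by $\|\kappa^*\|_{L^{1,\infty}}$ via \eqref{kstarfinite}), the final bound is $\lesssim r\,\|\kappa^*\|_{L^{1,\infty}}$ rather than $\lesssim 1$. Making this last counting rigorous, i.e. bounding the number of distinct passages by $C\|\kappa^*\|_{L^{1,\infty}}$, is where the real work lies; a clean way is to note that two passages near the same point $x$ with both parameters in a region where $r(\cdot)\ge 4r$ correspond to a near-self-intersection of $\Gamma$, and one can cover $\Gamma$ by $O(\|\kappa^*\|_{L^{1,\infty}})$ arcs on each of which $\Gamma$ is a near-straight graph (by a Besicovitch-type decomposition of the sublevel sets of $\kappa^*$), after which each arc contributes at most one passage.
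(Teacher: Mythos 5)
Your decomposition is essentially the right one, but the ``main obstacle'' you identify --- bounding the number of distinct passages of $\Gamma$ through $B_r(x)$ by $O(\|\kappa^*\|_{L^{1,\infty}})$ via a Besicovitch or Vitali covering of sublevel sets of $\kappa^*$ --- is a phantom. Your own observation in the second step already rules it out: if some $s_0\in S_{x,r}$ satisfies $r(s_0)\ge 4r$, then for \emph{every} $s'$ with $|s'-s_0|\ge r(s_0)$ the first clause of \eqref{r.def} gives $|\gamma(s')-\gamma(s_0)|\ge r(s_0)/2\ge 2r$, so $s'\notin S_{x,r}$. Hence the \emph{entire} set $S_{x,r}$ (not merely $S_{x,r}\cap\{r(\cdot)\ge 4r\}$) is contained in the single interval $(s_0-r(s_0),s_0+r(s_0))$; there is exactly one ``passage'' and nothing to count. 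On that interval the second clause of \eqref{r.def} gives, for $|h|\le r(s_0)$,
\[
\gamma'(s_0+h)\cdot\gamma'(s_0)=1-\tfrac12|\gamma'(s_0+h)-\gamma'(s_0)|^2\ge 1-\frac{h^2}{2r(s_0)^2}\ge\frac12,
\]
so $|(\gamma(s_0+h)-\gamma(s_0))\cdot\gamma'(s_0)|\ge|h|/2$; if $s_0+h\in S_{x,r}$ then also $|\gamma(s_0+h)-\gamma(s_0)|<2r$, whence $|h|<4r$ and $|S_{x,r}|<8r\le 8r\|\kappa^*\|_{L^{1,\infty}}$. In the complementary case $S_{x,r}\cap\{r(\cdot)\ge 4r\}=\emptyset$, your third step gives $S_{x,r}\subset\{\kappa^*>1/(4r)\}$, so $|S_{x,r}|\le 4r\|\kappa^*\|_{L^{1,\infty}}$. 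That is the whole proof; the Vitali selection, the cluster counting, and the decomposition of $\Gamma$ into near-straight graphs are not needed.

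For comparison, the paper uses the dual dichotomy $|A|\le 8r$ versus $|A|>8r$ (with $A:=S_{x,r}$), and in the second case shows that \emph{every} $s\in A$ has $r(s)\le 4r$: since $|A|>8r$ one may choose $t\in A$ with $4r<|s-t|\le\tfrac12$, and then the mean value theorem applied to $h\mapsto\gamma'(s)\cdot\gamma(s+h)$ forces $\gamma'(s)\cdot\gamma'(\tau)<\tfrac12$ (hence $|\gamma'(s)-\gamma'(\tau)|>1$) for some $\tau$ between $s$ and $t$, which is incompatible with $r(s)>4r$. So the paper's ``easy case'' is $|A|\le 8r$, while yours is $A\cap\{r(\cdot)\ge 4r\}\neq\emptyset$, and the mechanism that forces $r(\cdot)\le 4r$ on $A$ is a tangent-vector MVT argument rather than a direct appeal to the first clause; but the underlying content --- that \eqref{r.def} prevents $\gamma$ from lingering in $B_r(x)$ except where $r(\cdot)\lesssim r$, which is then controlled by the weak-$L^1$ bound --- is the same in both.
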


In fact the proof will show that we may take $C=8$.

\begin{proof}
Fix $x\in \R^3$ and $r>0$, and let $A :=\{ s\in \R/\Z : |\gamma(s) - x| < r \} $.
We consider 2 cases.

{\bf Case 1}: $|A| \le 8r$.

It is clear from the definition of $r(\cdot)$ that $\frac 1{\kappa^*(s)} = r(s)\le 1$ for every $s$,
and hence that $| \{ s\in \R/\Z : \kappa^*(s) \ge 1 \}| \ge 1$, which implies that 
$\| \kappa^*\|_{L^{1,\infty}} \ge 1$. Thus \eqref{linear.est} holds.

{\bf Case 2}: $|A|>8r$.

For any $s\in A$, there must then exist $t\in A$ such that $4r < |s-t| <1/2$.
The definition of $A$ and the triangle inequality imply that 
$|\gamma(s) - \gamma(t)| < 2r$.
Then 
\[
2r > \left| \gamma'(s)\cdot (\gamma(t)-\gamma(s))\right| =   \left|\gamma'(s)\cdot \gamma'(\tau)\right| \ |t-s|
\]
for some $\tau$ between $s$ and $t$. It follows that $\gamma'(s)\cdot \gamma'(\tau) < \frac 12$, and
hence
that  $|\gamma'(s)-\gamma'(\tau)|^2 = 2 - 2\gamma'(s)\cdot \gamma'(\tau) > 1$.

These facts imply that $r(s)\le 4r$. To check this, we must show that
for every $\rho > 4r$, one of two inequalities appearing in the definition
\eqref{r.def} of $r(\cdot)$ is violated.
The first of these inequalities is
\[
\ |\gamma(s+h) -\gamma(s)|\ge  \frac \rho 2 \ \mbox{ for all }|h|\ge \rho,
\]
which is violated by $s+h = t$, if $4r <  \rho <|t-s|$.
The other inequality is 
\[
|\gamma'(s+h)-\gamma'(s) | \le \frac {|h|}\rho   \ \mbox{ for all }|h|\le \rho,
\]
and is violated by $s+h = \tau$, if $|\tau - s| <\rho$. Thus at least one
of these must fail when $\rho >4r$, so
$r(s)\le 4r$
as claimed.

Since $s$ was arbitrary, we conclude 
$A\subset \{ s : \kappa^*(s) \ge \frac 1{4r}\}$,
and hence that 
\[
|A| \le  4r \| \kappa^*\|_{L^{1,\infty}}.
\]
This proves Lemma \ref{L.linear}.
\end{proof}

%%%%%%%%%%%%%%%%%%%%%%%%%%%%%%%%%%%%%%%%%%%%%%%%%%%%%%%%%%%%%%%%%%%%%%%%%%%%%
\subsection{Some properties of the flat norm}\label{S:Flatnorm}
%%%%%%%%%%%%%%%%%%%%%%%%%%%%%%%%%%%%%%%%%%%%%%%%%%%%%%%%%%%%%%%%%%%%%%%%%%%%%

\begin{lemma}\label{L12}
Let $\omega$ be a  divergence-free vector-valued measure on $\R^3$ such that $\|\omega\|_F<\infty$. 
Then
\begin{equation}
\| \omega\|_F 
= \inf \{ |\varphi|(\R^3)  \ : \ \varphi \in \mathcal M(\R^3;\R^3), \ \nabla\times \varphi = \omega \}
\label{612}\end{equation}
and the infimum is attained.
%Then there exists a vector-valued Radon measure $\varphi$ on $\R^3$ satisfying
%\begin{equation}\label{61}
%\curl \varphi=\omega\quad\mbox{distributionally in }  \R^3
%\end{equation}
%and
%\begin{equation}\label{62}
%\|\omega\|_F= | \varphi|(\R^3)\end{equation}
\end{lemma}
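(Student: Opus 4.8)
The identity \eqref{612} is a duality statement: $\|\omega\|_F$ is defined as a supremum over test vector fields $\xi$ with $\|\curl\xi\|_{L^\infty}\le 1$, and the right-hand side is an infimum over vector measures $\varphi$ with $\curl\varphi=\omega$. The plan is to prove the two inequalities separately. The easy direction is ``$\le$'': given any $\varphi\in\mathcal M(\R^3;\R^3)$ with $\curl\varphi=\omega$ and any admissible $\xi$ in \eqref{def1}, integrate by parts to write $\int\xi\cdot d\omega=\int\xi\cdot d(\curl\varphi)=\int(\curl\xi)\cdot d\varphi$, so that $|\int\xi\cdot d\omega|\le\|\curl\xi\|_{L^\infty}|\varphi|(\R^3)\le|\varphi|(\R^3)$; taking the supremum over $\xi$ and then the infimum over $\varphi$ gives $\|\omega\|_F\le\inf\{|\varphi|(\R^3)\}$. (One should be mildly careful justifying the integration by parts when $\varphi$ is merely a measure and $\xi\in C^1_c$, but $\curl\xi\in C_c$ so this is just the distributional definition of $\curl\varphi$.)

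\textbf{The hard direction} is ``$\ge$'', i.e.\ constructing a measure $\varphi$ with $\curl\varphi=\omega$ and $|\varphi|(\R^3)\le\|\omega\|_F$, which also gives attainment. The natural approach is Hahn--Banach / duality for the minimization problem. Consider the linear map $T:\xi\mapsto\curl\xi$ from a suitable space of test fields into $C_0(\R^3;\R^3)$, and note that $\xi\mapsto\int\xi\cdot d\omega$ is, by hypothesis, a linear functional that is bounded by $\|\omega\|_F\,\|T\xi\|_{L^\infty}$ on the range of $T$. Hence it descends to a bounded linear functional of norm $\le\|\omega\|_F$ on the subspace $T(\{\xi\})\subset C_0(\R^3;\R^3)$; extend it by Hahn--Banach to all of $C_0(\R^3;\R^3)$ without increasing the norm, and represent it by the Riesz representation theorem as a vector measure $\varphi$ with $|\varphi|(\R^3)\le\|\omega\|_F$. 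By construction $\int(\curl\xi)\cdot d\varphi=\int\xi\cdot d\omega$ for all test $\xi$, which is precisely the statement $\curl\varphi=\omega$ distributionally. Then $\varphi$ realizes the infimum and, combined with the easy direction, forces equality in \eqref{612} and shows the infimum is attained.

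\textbf{Main obstacle.} The delicate point is checking that the functional is \emph{well-defined} on the range of $T$: if $\curl\xi_1=\curl\xi_2$ then we need $\int\xi_1\cdot d\omega=\int\xi_2\cdot d\omega$, i.e.\ $\int\eta\cdot d\omega=0$ whenever $\curl\eta=0$ for $\eta$ in the test class. This uses $\div\omega=0$: a curl-free test field is (locally) a gradient, and $\int\nabla f\cdot d\omega=-\int f\,d(\div\omega)=0$. One must handle the passage from ``curl-free'' to ``gradient'' on $\R^3$ (true globally since $\R^3$ is simply connected, with appropriate decay/compact-support bookkeeping), and one must be slightly attentive to the choice of function space so that the Riesz representation lands in finite total variation measures and so that $T$ has dense enough range in the relevant subspace of $C_0$. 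An alternative, perhaps cleaner, route avoiding the quotient subtlety is to set up the problem directly as convex duality: minimize $|\varphi|(\R^3)$ over the affine set $\{\varphi:\curl\varphi=\omega\}$, recognize this as the dual of the maximization defining $\|\omega\|_F$ via the Fenchel--Rockafellar theorem, and check the constraint qualification (e.g.\ existence of at least one feasible $\varphi$, which follows from $\|\omega\|_F<\infty$ together with a mollification argument producing a smooth integrable representative). Either way, the substance is the same duality pairing $\langle\curl\xi,\varphi\rangle=\langle\xi,\omega\rangle$ backed by $\div\omega=0$; I would write it via Hahn--Banach as above, as that keeps the attainment claim transparent.
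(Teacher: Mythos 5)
Your proposal takes essentially the same approach as the paper: the easy inequality by integration by parts, then the hard direction by defining the functional $\curl\xi\mapsto\int\xi\cdot d\omega$ on the range of $\curl$, checking well-definedness via $\div\omega=0$ together with the fact that a curl-free compactly supported field on the simply connected $\R^3$ is a gradient of a $C^2_c$ function, extending by Hahn--Banach with no increase in norm, and representing the extension as a vector measure by Riesz duality. The paper works with $C_c(\R^3;\R^3)$ rather than $C_0$, but this is an immaterial choice; your identification of the well-definedness step as the crux matches the paper exactly.
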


In the statement of the lemma,  $\mathcal M(\R^3 ;\R^3)$ denotes the space of vector-valued measures on
$\R^3$, and 
$|\varphi|$ denotes the total variation of the measure $\varphi$, defined by
\[
|\varphi|(\R^3) =\sup\left\{ \int f\cdot d\varphi :\: f\in C_c(\R^3;\R^3),\,  \|f\|_{\Li}\le 1\right\}.
\]
Clearly, if $\varphi$ is absolutely continuous and integrable, then $|\varphi|(\R^3) = \|\varphi\|_{L^1}$. 

The lemma is a special case of variant of \cite[4.1.12]{Federer69}, rewritten in the language of vector calculus.
We provide a proof for the reader's convenience.

\begin{proof}
We first observe that for any $\xi\in C_c^{\infty}(\R^3;\R^3)$ with $\|\curl \xi\|_{C^0}\le1$ and any vector-valued Radon measure $\varphi$ such that $\nabla\times \varphi = \omega$,
%\eqref{61}, it holds that%
\[
\int \xi\cdot \, d\omega = \int \curl\xi\cdot d\varphi\le |\varphi|(\R^3).
\]
It follows that
\[
\| \omega\|_F 
\le \inf \{ |\varphi|(\R^3)  \ : \ \varphi \in \mathcal M(\R^3;\R^3), \ \nabla\times \varphi = \omega \}.
\]
The other inequality is a nice application of the Hahn--Banach theorem. We denote by $X$ the vector space $C_c(\R^3;\R^3)$ equipped with the maximum norm, and define the linear subspace
\[
Y:=\left\{f\in X:\: \exists\xi \in C_c^{1}(\R^3;\R^3)\mbox{ with } f=\curl \xi\right\},
\]
also equipped with the maximum norm. We furthermore define implicitly
\[
L(\curl \xi):= \int \xi\cdot d\, \omega
\]
for vector fields $\nabla \times \xi\in Y$.
This linear operator is well-defined thanks to the divergence-free condition for $\omega$. Indeed, if $\xi_1$ and $\xi_2$ are such that $\curl\xi_1=\curl \xi_2$, then $\xi_1 - \xi_2 = \grad\rho$ for some $C^2_c$ function $\rho$, and thus
\[
\int \xi_1\cdot d\omega - \int \xi_2 \cdot d \omega = \int \grad\rho\cdot d \omega = 0,
\]
because $\div \omega=0$. Moreover, $L: Y\to \R$ is bounded with operator-norm
\[
\|L\|_{Y\to\R} =\sup\left\{L(f):\: f\in Y\mbox{ with }\|f\|_{\Li}\le 1\right\}= \|\omega\|_F.
\]
By the Hahn--Banach theorem (see, e.g., \cite[Theorem 3.3]{Rudin}), there exists a linear function $\bar L:X\to \R$ such that $\bar L$ agrees with $L$ on $Y$, and whose norm is not larger than that of $L$:
\[
\|\bar L\|_{X\to \R} = \|L\|_{Y\to\R} = \|\omega\|_F.
\]
By duality (see, e.g., \cite[Chapter 1.8]{EvansGariepy92}), there exists a vector-valued Radon measure $\varphi$ on $\R^3$ such that
\[
\bar L(f) = \int f\cdot d\varphi\quad\mbox{for all }f\in C_c(\R^3;\R^3).
\]
It follows that
\[
\int \xi\cdot d \omega = L(\curl \xi) = \int \curl \xi\cdot d\varphi
\]
for all $\xi \in C_c^{1}(\R^3;\R^3)$, that is, $\omega = \nabla\times \varphi$. Furthermore,
\[
\|\omega\|_F =  \|\bar L\|_{X\to \R} = \sup\left\{ \int f\cdot d\varphi:\: f\in C_c(\R^3:\R^3),\, \|f\|_{\Li}\le 1\right\} = |\varphi|(\R^3).
\]
 This completes the proof of \eqref{612}, and also shows that  the infimum is attained.
\end{proof}

\begin{lemma}\label{L12bis}
 Assume that $w\in L^2(\R^3;\R^3)$ and that $\nabla \cdot w = 0$.
If $\| \nabla\times w\|_F <\infty$, then  %$w\in L^{1,\infty}(\R^3)$, and 
\[
\| w \|_{L^{1,\infty}} \le \| \nabla\times w\|_F.
\]
\end{lemma}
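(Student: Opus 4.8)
The plan is to combine the representation of the flat norm from Lemma~\ref{L12} with the (weak-$L^1$ vs.\ weak-$L^\infty$) duality underlying the Lorentz-space structure. By Lemma~\ref{L12}, since $w$ is divergence-free and $\|\nabla\times w\|_F<\infty$, there exists a vector-valued Radon measure $\varphi$ with $\nabla\times\varphi=\nabla\times w$ and $|\varphi|(\R^3)=\|\nabla\times w\|_F$. Because $w-\varphi$ is curl-free (distributionally) and divergence-free, I expect $w=\varphi$ as distributions once one checks that a curl-free, divergence-free object that is the difference of an $L^2$ field and a finite measure must vanish; the cleanest route is to note $w-\varphi=\nabla q$ for some distribution $q$ with $\Delta q=0$, hence $q$ is harmonic, and the only harmonic $\nabla q$ which is a tempered distribution of this regularity (difference of $L^2$ and a finite measure, so in particular in $L^1+L^2$ locally and decaying in a suitable averaged sense) is $0$. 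So $w$ itself is (the density of) the finite measure $\varphi$, giving $\|w\|_{L^1}=|\varphi|(\R^3)=\|\nabla\times w\|_F$ when $\varphi$ is absolutely continuous — but in general we only get $|\varphi|(\R^3)=\|\nabla\times w\|_F$ and $w\,\calL^3 = \varphi$ as measures, which forces $\varphi$ to be absolutely continuous (since $w\in L^2\subset L^1_{\loc}$) and $\|w\|_{L^1}=\|\nabla\times w\|_F<\infty$.

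Once $w\in L^1(\R^3;\R^3)$ with $\|w\|_{L^1}=\|\nabla\times w\|_F$, the desired bound is immediate from the elementary inequality $\|f\|_{L^{1,\infty}}\le\|f\|_{L^1}$, valid for any measurable $f$: indeed for every $\sigma>0$, Chebyshev gives $\sigma\,|\{|f|\ge\sigma\}|\le\int_{\{|f|\ge\sigma\}}|f|\le\|f\|_{L^1}$, and taking the supremum over $\sigma$ yields $\|f\|_{L^{1,\infty}}\le\|f\|_{L^1}$. Applying this to $f=|w|$ (or componentwise and then combining, depending on how $\|w\|_{L^{1,\infty}}$ is normalized for vector fields) gives $\|w\|_{L^{1,\infty}}\le\|w\|_{L^1}=\|\nabla\times w\|_F$, as claimed.

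The main obstacle is the identification $w\,\calL^3=\varphi$, i.e.\ ruling out a nonzero harmonic gradient. One must be careful that $w\in L^2(\R^3)$ really does pin down the ``gauge'': a priori $w$ and $\varphi$ are only two vector potentials for the same vorticity, and potentials differ by gradients. The point is that $w$ is the \emph{Biot--Savart} potential (the unique divergence-free one in a reasonable class), while $\varphi$ need not be divergence-free — but $\nabla\times\varphi\in L^2$-ish and $\div\varphi$ is some distribution; writing $\varphi=\varphi_{\mathrm{df}}+\nabla p$ via Hodge/Helmholtz, one has $\varphi_{\mathrm{df}}$ divergence-free with the same curl as $w$, and $|\varphi_{\mathrm{df}}|\le$ (in an $L^1$ sense) a quantity controlled by $|\varphi|$; since $w$ is the unique divergence-free potential of $\nabla\times w$ in $L^2$, one concludes $w=\varphi_{\mathrm{df}}$ and then $\|w\|_{L^1}=\|\varphi_{\mathrm{df}}\|_{L^1}\le|\varphi|(\R^3)=\|\nabla\times w\|_F$. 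I would present this Helmholtz-decomposition step carefully, since it is where the divergence-free hypothesis on $w$ is genuinely used; the rest is soft.
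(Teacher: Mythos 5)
The critical flaw is the assertion that the Helmholtz/Leray projection $\varphi \mapsto \varphi_{\mathrm{df}}$ maps a finite vector measure $\varphi$ to an $L^1$ field with $\|\varphi_{\mathrm{df}}\|_{L^1}$ controlled by $|\varphi|(\R^3)$. It does not. The Leray projector is the Fourier multiplier $m(\xi) = I - \xi\otimes\xi/|\xi|^2$, a Calder\'on--Zygmund (degree-zero homogeneous) operator, and such operators are unbounded on $L^1$; the correct endpoint estimate is the weak-type $(1,1)$ bound $L^1\to L^{1,\infty}$. This is not a technicality to be smoothed over: the lemma's conclusion is the weak $L^1$ bound precisely because the strong $L^1$ bound is false. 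Indeed, the prototype field $\ve$ decays too slowly at infinity to belong to $L^1(\R^3)$, even though $\|\curl\ve - \mu_\Gamma\|_F\le\ep$ by \eqref{ve.0}, so your plan of first establishing $w\in L^1$ with $\|w\|_{L^1}\le\|\curl w\|_F$ and then applying Chebyshev cannot be made to work.

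The paper's proof is structurally close to your corrected Helmholtz route but places the weak-$(1,1)$ bound where it belongs. It mollifies, writes $\widehat{w^\eps}(\xi) = m(\xi)\widehat{\varphi^\eps}(\xi)$ --- which encodes simultaneously the shared curl and the divergence-free gauge of $w$, i.e.\ the ``uniqueness of potential'' issue you flagged, with no separate Helmholtz argument needed --- and invokes H\"ormander's multiplier theorem to obtain $\|w^\eps\|_{L^{1,\infty}} \lesssim \|\varphi^\eps\|_{L^1} \le |\varphi|(\R^3)$, then lets $\ep\to 0$. In short, the weak Lorentz norm is the \emph{output} of the singular-integral estimate, not something to be recovered from Chebyshev after $L^1$ membership; your final step $\|w\|_{L^{1,\infty}}\le\|w\|_{L^1}$ is correct but moot, since $w\notin L^1$ in general.
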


\begin{proof}
By Lemma \ref{L12}, there exists a vector-valued measure $\varphi$ on $\R^3$ such that $\curl \varphi =\curl w$ and $|\varphi|(\R^3) = \|\curl w\|_F$. The linear relation between $\varphi$ and $w$ is retained by convolution, and thus, denoting by $\varphi^{\eps}$ and $w^{\eps}$ the convolutions of $\varphi$ and $w$, respectively, with a smooth mollification kernel $\re$, on the Fourier level it holds
\[
\widehat{w^{\eps} } (\xi) = m(\xi) \widehat{\varphi^{\eps}}(\xi)\quad\mbox{where }m(\xi)= \I - \frac{\xi\otimes\xi}{|\xi|^2}, 
\]
as can be derived from the elementary formula $\curl\curl A= -\laplace A+ \grad\left(\div A\right)$. Because $m$ is homogeneous of degree zero, bounded, and smooth away from the origin, we infer from H\"ormander's multiplier theorem \cite[Theorem 0.2.6]{Sogge} that
\begin{equation}\label{62bis}
\|w^{\eps}\|_{L^{1,\infty}} \lesssim \|\varphi^{\eps}\|_{L^1}.
\end{equation}
Clearly,
\[
\|\varphi^{\eps}\|_{L^1} = \sup_{\|\zeta\|_{\Li}\le1} \int\zeta\cdot \varphi^{\eps}\, dx= \sup_{\|\zeta\|_{\Li}\le1} \int \zeta\ast\rho^{\eps}\cdot d\varphi \le |\varphi|(\R^3),
\]
since $\|\zeta\ast\rho^{\eps}\|_{\Li}\le \|\zeta\|_{\Li}$, and thus, passing to the limit in \eqref{62bis} yields
\[
\|w\|_{L^{1,\infty}} \lesssim |\varphi|(\R^3).
\]
To conclude, it remains to combine this estimate with the statement of Lemma \ref{L12}.
\end{proof}

\begin{lemma}\label{L6}
Let $\mu\in \M(\R^3;\R^3)$ be  compactly supported  and divergence-free. 
If $\re$ is a nonnegative function
such that $\mbox{supp}(\re)\subset B_\eps(0)$ and $\int \re = 1$, then
\[
\| \rho^{\eps} \ast \mu - \mu\|_F \le \ep |\mu|(\R^3).
\]
\end{lemma}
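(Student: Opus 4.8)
The plan is to reduce the statement to a translation estimate and then exploit the hypothesis $\div\mu=0$ through an elementary vector-calculus identity. Writing $\tau_z\mu$ for the translate of $\mu$ by $z\in\R^3$, characterized by $\int g\cdot d(\tau_z\mu)=\int g(x+z)\cdot d\mu(x)$, one checks the exact identity $\rho^\eps\ast\mu-\mu=\int_{\R^3}\rho^\eps(z)\,(\tau_z\mu-\mu)\,dz$ (it holds because $\int\rho^\eps=1$; no symmetry of $\rho^\eps$ is needed). Since for every $\xi\in C^1_c(\R^3;\R^3)$ with $\|\curl\xi\|_{\Li}\le1$ and every vector-valued Radon measure $\nu$ one has $\int\xi\cdot d\nu\le\|\nu\|_F$, averaging over $z$ gives
\[
\|\rho^\eps\ast\mu-\mu\|_F\le\int_{\R^3}\rho^\eps(z)\,\|\tau_z\mu-\mu\|_F\,dz,
\]
so it suffices to prove that $\|\tau_z\mu-\mu\|_F\le|z|\,|\mu|(\R^3)$ for every $z\in\R^3$, and then use $\mathrm{supp}\,\rho^\eps\subset B_\eps(0)$ together with $\int\rho^\eps=1$.

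For the translation estimate, fix $z$ and a test field $\xi$ with $\|\curl\xi\|_{\Li}\le1$. Then $\int\xi\cdot d(\tau_z\mu-\mu)=\int[\xi(x+z)-\xi(x)]\cdot d\mu(x)$, and the fundamental theorem of calculus gives $\xi(x+z)-\xi(x)=\int_0^1\big((z\cdot\grad)\xi\big)(x+tz)\,dt$. The crucial step is the identity $(z\cdot\grad)\xi=\grad(z\cdot\xi)-z\times(\curl\xi)$, valid for constant $z$. Substituting this and integrating against $d\mu$, the gradient term drops out: for each fixed $t$ the map $x\mapsto(z\cdot\xi)(x+tz)$ lies in $C^1_c(\R^3)$ and its $x$-gradient is $\big(\grad(z\cdot\xi)\big)(x+tz)$, so $\int\grad\big[(z\cdot\xi)(\cdot+tz)\big]\cdot d\mu=0$ by $\div\mu=0$ (and a Fubini argument to interchange the $t$- and $\mu$-integrations). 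What remains is
\[
\int\xi\cdot d(\tau_z\mu-\mu)=-\int_0^1\!\int\big(z\times(\curl\xi)(x+tz)\big)\cdot d\mu(x)\,dt,
\]
whose absolute value is at most $|z|\,\|\curl\xi\|_{\Li}\,|\mu|(\R^3)\le|z|\,|\mu|(\R^3)$, using $|z\times v|\le|z|\,|v|$ pointwise. Taking the supremum over admissible $\xi$ yields $\|\tau_z\mu-\mu\|_F\le|z|\,|\mu|(\R^3)$, and combining with the first paragraph finishes the proof. Compact support of $\mu$ enters only to ensure $|\mu|(\R^3)<\infty$ and that the various integrals converge; in particular Lemma \ref{L12} is not needed.

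The main obstacle is exactly the reason the obvious estimate fails: transferring the mollification onto the test field gives only $\int\xi\cdot d(\rho^\eps\ast\mu-\mu)=\int(\rho^\eps\ast\xi-\xi)\cdot d\mu\le\|\rho^\eps\ast\xi-\xi\|_{\Li}\,|\mu|(\R^3)\sim\eps\,\|\grad\xi\|_{\Li}\,|\mu|(\R^3)$, and $\|\grad\xi\|_{\Li}$ is not controlled by $\|\curl\xi\|_{\Li}$. The entire point of the argument is that the ``bad'' gradient part of $\grad\xi$ is annihilated by the cycle condition $\div\mu=0$; the identity $(z\cdot\grad)\xi=\grad(z\cdot\xi)-z\times\curl\xi$ is precisely the bookkeeping that isolates this cancellation. (In the language of currents this is the standard prism inequality for a cycle under an affine homotopy, but the computation above makes it elementary and self-contained.)
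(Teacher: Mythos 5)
Your proof is correct and it does establish the lemma, but it takes a genuinely different route from the paper's. Both proofs rest on the same underlying idea --- the affine-homotopy (prism) inequality for a divergence-free measure --- but they implement it on opposite sides of the duality. The paper constructs, for each $z$, an explicit vector-valued measure $R_z$ (supported on the prism swept out between $\mu_\Gamma$ and its translate) satisfying $\nabla\times R_z = \sigma_z\mu - \mu$, estimates $|R_z|(\R^3)\le |z|\,|\Gamma|$, and then invokes Lemma \ref{L12} (the Hahn--Banach characterization $\|\omega\|_F = \inf\{|\varphi|(\R^3): \curl\varphi=\omega\}$) to pass from the mass of $R_z$ to the flat norm of its curl. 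You instead work directly with the definition of $\|\cdot\|_F$ as a supremum over test fields $\xi$ with $\|\curl\xi\|_{\Li}\le 1$: the fundamental theorem of calculus plus the pointwise identity $(z\cdot\grad)\xi=\grad(z\cdot\xi)-z\times\curl\xi$ isolates a pure gradient, which is annihilated by $\div\mu=0$, leaving only a $\curl\xi$ contribution bounded by $|z|$. This bypasses Lemma \ref{L12} entirely, as you note, and incidentally handles a general compactly supported divergence-free $\mu$ (the paper's proof is written only for $\mu=\mu_\Gamma$, though the lemma is stated generally and the $R_z$ construction extends). The tradeoff: your argument is more elementary and self-contained at the spot, whereas the paper's $R_z$ is the natural geometric-measure-theory object and its construction is reused conceptually elsewhere; and the paper has already paid the cost of Lemma \ref{L12} (needed independently for Lemma \ref{L12bis}), so invoking it here is free. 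Both are valid; yours is a clean alternative that makes the role of $\div\mu=0$ more transparent.
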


\begin{proof}
For any $z\in \R^3$, let us write $\sigma_z \mu$ to denote the measure
defined by
\[
\int \phi\cdot d(\sigma_z \mu) :=
\int \phi(\tacka+z) \cdot d\mu 
=
\int_\Gamma \phi(x+z) \cdot \tau_\Gamma(x)\, d\calH^1(x).
\]
We also define a vector-valued measure $R_z$ by
\[
\int \phi \cdot dR_z = \int_\Gamma \int_0^1 \phi(x +sz) \cdot (z \times \tau_\Gamma(x))\, ds d\calH^1(x).
\]
It is a standard fact that
\begin{equation}
\nabla \times R_z = \sigma_z \mu - \mu.
\label{homotopy}\end{equation}
We recall the proof:  for any $\phi \in C^\infty_c(\R^3;\R^3)$,
\begin{align*}
\int \phi \cdot  d(\nabla \times R_z) 
&= 
\int \nabla \times \phi \cdot  dR_z \\
&=
\int_\Gamma \int_0^1  \left(\nabla \times\phi\right)(x +sz) \cdot \left(z \times \tau_\Gamma(x) \right)\, ds d \calH^1(x) .
\end{align*}
%And, omitting the arguments of $\phi$ and $\tau_\Gamma$,
%\begin{align*}
%\left(\nabla\times \phi\right) \cdot\left( z\times \tau_\Gamma\right)
%&=
%(\partial_2\phi_3 - \partial_3\phi_2)(z_2\tau_{\Gamma,3} -z_3\tau_{\Gamma,2})  
%+ \mbox{cyclic permutations}\\
%&= \sum_{i\ne j}\left(\partial_i\phi_j z_i\tau_{\Gamma,j}  -  \partial_i\phi_j z_j\tau_{\Gamma,i}\right) \\
%&= \sum_{i, j} \left( \partial_i\phi_j z_i\tau_{\Gamma,j}  -  \partial_i\phi_j z_j\tau_{\Gamma,i}\right) .
%\end{align*}
%Restoring the argument of  $\phi = \phi(x+sz)$, it follows that
Straightforward computations show that
\[
\left(\nabla\times \phi\right) (x+sz) \cdot \left(z\times \tau_\Gamma\right)
= 
\sum_j \frac{\partial}{\partial s}\left( \phi_j(x+sz) \right)\tau_{\Gamma,j} - \sum_j \tau_\Gamma\cdot \nabla \phi_j (x+sz)z_j.
\]
Clearly,
\[
\int_0^1 \int_\Gamma  \tau_\Gamma \cdot\nabla\phi_j(x +sz) z_j\, d\calH^1(x) ds
= 0
\]
since the integral over $\Gamma$ vanishes for every $s$. Thus we conclude from the fundamental
theorem of calculus that
\begin{align*}
\int \phi \cdot  d\left(\nabla \times R_z \right)
&
=
\sum_j\int_\Gamma \int_0^1\frac{d}{ds} \phi_j(x+sz) \tau_{\Gamma,j}(x)\, ds   d\calH^1(x) \\
&=
\int_\Gamma \left( \phi(x+z) - \phi(x) \right)\cdot \tau_{\Gamma}(x) \,  d\calH^1(x),
\end{align*}
which proves \eqref{homotopy}.
It follows that
\[
 \rho^\ep * \mu - \mu
= \int \rho^\ep(z) ( \sigma_z\mu - \mu)\, dz
%\]
%we see from \eqref{homotopy} that 
%\[
%\rho^{\eps}\ast \mu - \mu  
= \nabla \times \int \rho^\ep(z) R_z \ dz
\]
in the sense of distributions.  Hence by Lemma
\ref{L12},
\begin{align*}
\|\rho^{\eps}\ast\mu - \mu  
\|_F \ \le \ 
\left|   \int \rho_\ep(z) R_z \ dz \right|(\R^3)
&\ \le \ 
 \int \rho_\ep(z) \left| R_z \right|(\R^3) \ dz
\\
& \ \le \ \sup_{|z|\le \ep} \left| R_z \right|(\R^3).
\end{align*}
However, it is easy to check from the definition that
$ \left| R_z \right|(\R^3) \le |z| \ |\Gamma|$ for every $z$,
so the conclusion follows.
\end{proof}

\subsection{Proof of Theorem \ref{T1}}\label{Proof.Theorem2}

\begin{proof}[Proof of Theorem \ref{T1}.] 
The fact that $0\le \exc_{\eps} + \mathcal{O}\left(\|\kappa^*\|_{L^{1,\infty}}^2|\log\eps|^{-1}\right)$  follows immediately from the main estimate \eqref{main.estimate}.

We prove \eqref{main.estimate}  componentwise. For this purpose, we fix $i,j\in\{1,2,3\}$ and write
\begin{eqnarray*}
\int \phi \ue_i\ue_j\, dx &=& \int \phi \ve_i\ve_j\, dx\\
&&\mbox{} + \int \phi \left(\ue_i-\ve_i\right)\left(\ue_j-\ve_j\right)\, dx\\
&&\mbox{} + \int \phi\left(\ue_i - \ve_i\right)\ve_j\, dx + \int\phi \left(\ue_j - \ve_j\right)\ve_i\, dx.
\end{eqnarray*}
Thanks to \eqref{ve.4} in Proposition \ref{Prop.ve}
(whose proof appears in Section \ref{S:prototype} below),  
conclusion \eqref{main.estimate} follows from the estimates
\begin{eqnarray}
\int \phi \left(\ue_i-\ve_i\right)\left(\ue_j-\ve_j\right)\, dx & =& \frac{|\log\eps|}{2\pi}\|\phi\|_{L^{\infty}} \exc_{\eps} +
\  \mathcal{O}(\|\phi\|_{L^{\infty}} \| \kappa^*\|_{L^{1,\infty}}^2),\label{50}\\
\int \phi\left(\ue_i - \ve_i\right)\ve_j\, dx& \le &C|\log\eps|\|\phi\|_{L^{\infty}} \exc_{\eps} + \  
 \mathcal{O}(\|\phi\|_{L^{\infty}} \| \kappa^*\|_{L^{1,\infty}}^2)
\label{51},
\end{eqnarray}
for all $i,j\in\{1,2,3\}$ and all $\phi\in W^{1,\infty}(\R^3)$.

To prove \eqref{50} and \eqref{51}, we need some preparation. 
First, note from assumption \eqref{7b} and estimate \eqref{ve.0} in Proposition \ref{Prop.ve} 
that
\begin{equation}
\| \nabla\times \ue - \nabla\times \ve\|_F \le 2 \ep.
\label{137}\end{equation}
Next, we write
$\ve$ in the form $\ve = \nabla \times \Phi^\ep$, for $\Phi^\ep = \G \ast \re \ast \mu_\Gamma$, where $\G$ denotes the Newtonian potential $\G(z) = \frac1{4\pi |z|}$.
 We recall that $\Phi^{\eps}(x) =( \G\ast\mu_{\Gamma})(x)$ for all $x$ such that $\dist(x,\Gamma)\ge \eps$ by the mean value property of harmonic functions. Notice also that $\int d\mu_{\Gamma} = 0 $, because $\Gamma$ is a closed curve. As a consequence,
\[
|\Phi^\ep(x) | = \left| \int  (\G(x-z) - \G(x))\, d\mu_\Gamma (z)\right| \lesssim \int \frac{|z|}{|x|^2}\, d|\mu_\Gamma| \lesssim \frac{1}{|x|^2},
\]
whenever $|x|$ is sufficiently large.
Now let $\chi\in C^\infty_c(B_2(0))$ be a function such that $\chi = 1$ in $B_1(0)$,
and let $\chi_\lambda(x) = \chi(x/\lambda)$.
Then using the above decay of $\Phi^\ep$, one easily checks that
\begin{eqnarray*}
\int \ve\cdot \left(\ve-\ue\right)\, dx 
&=&
\lim_{\lambda\to \infty}\int (\chi_\lambda \Phi^\ep)\cdot \curl (\ve - \ue)\, dx \\
&\le&
\liminf_{\lambda
\to\infty}
\|\nabla\times (\chi_\lambda \Phi^\ep) \|_{L^\infty} \|\curl \ve - \curl \ue\|_F.
\end{eqnarray*}
Also,  after again using the above decay of $\Phi^\ep$, we find % from \eqref{ve.1a} that
\[
\limsup_{\lambda\to \infty}  \| \nabla \times (\chi_\lambda \Phi^\ep) \|_{L^\infty}   = \| \ve\|_{L^\infty}.
\]
Thanks to  \eqref{ve.1} (with $q=\infty$) in Proposition \ref{Prop.ve} and  \eqref{137}
we conclude that
\[
\left|\int \ve\cdot \left(\ve-\ue\right)\, dx\right| \lesssim % \|\ve\|_{\Li} \|\curl \ve- \curl \ue\|_F \lesssim
 \| \kappa^*\|_{L^{1,\infty}}.
\]
It thus follows by \eqref{ve.2} in Proposition \ref{Prop.ve} and the definition \eqref{exc.def}  of $\exc_\ep$ that 
\begin{eqnarray}
\int |\ue-\ve|^2\, dx &=& \int |\ue|^2 \, dx - \int |\ve|^2\, dx + 2\int \ve\cdot (\ve-\ue)\, dx\nonumber\\
&=& \frac{|\log\eps|}{2\pi} \exc_{\eps} +  \mathcal{O}\left( \| \kappa^*\|_{L^{1,\infty}}^2\right).\label{138}
\end{eqnarray}
(Recall that $\kappa^*\ge 1$.) From this estimate, we readily deduce \eqref{50}.

We turn to the proof of \eqref{51}. We let $1<p<2<q<\infty$ be arbitrarily fixed such that $1=\frac1p + \frac1q$. Then by H\"older's inequality
\[
\left|\int \phi\left(\ue_i - \ve_i\right)\ve_j\, dx\right|\le \|\phi\|_{L^{\infty}} \|\ue-\ve\|_{L^p} \|\ve\|_{L^q}.
\]
We invoke the interpolation inequality $\|f\|_{L^p} \lesssim \|f\|_{L^{1,\infty}}^{\frac{2-p}p} \|f\|_{L^2}^{\frac{2p-2}p}$ (the short proof of which can be found in the appendix).
We also have
\[
\| \ue-\ve\|_{L^{1,\infty}} \lesssim \|\nabla\times \ue - \nabla\times \ve\|_F \lesssim \eps
\]
by Lemma \ref{L12bis} and \eqref{137}.
Combining this with \eqref{ve.1} in Proposition \ref{Prop.ve}, we find that 
\[
\left|\int \phi\left(\ue_i - \ve_i\right)\ve_j\, dx\right|\lesssim \|\phi\|_{L^{\infty}}  \|\ue-\ve\|_{L^2}^{\frac{2p-2}p} \|\kappa^*\|_{L^{1,\infty}}.
\]
Now we choose $p=4/3$ and apply Young's inequality $ab\le  a^{2}/2+ b^{2}/2$ together with  \eqref{137} and \eqref{138} to deduce \eqref{51}.
This proves Theorem \ref{T1}.
\end{proof}

\subsection{Proof of Theorem \ref{T2}}

\begin{proof}[Proof of Theorem \ref{T2}]

Throughout this proof, implicit constants hidden in symbols such as 
$\lesssim$ or $\mathcal{O}(\cdots)$ may depend on 
quantities appearing in assumptions \eqref{Rgamma}, \eqref{initial.ue},
and \eqref{kstar.uniform}, but are independent of $\ep$ and of properties
of $\Lambda, \Gamma$ and $\ue$ not appearing in the assumptions. 

Let us define
\begin{gather*}
R_\gamma := \frac 14 \inf \left\{   r_{\gamma(t,\cdot)} (s)  \ : \ 0\le t\le T,s\in \R/\Z \right\},
\\
N(\Gamma,t) :=  \left\{ x\in \R^3 : \dist(x, \Gamma(t)) < R_\gamma\right\}.
\end{gather*}
We have assumed that $R_\gamma>0$, see \eqref{Rgamma}.
For every $t\in [0,T]$, according Lemma \ref{L9cis},
there is a (well-defined) map $\zeta_t : N(\Gamma,t)\to \R/\Z$ 
characterized by
\[
\qquad |x-\gamma(t, \zeta_t(x)) | = \dist(x, \Gamma(t)).
\]
Recall from \eqref{kappastar}, \eqref{ksgpp} that $r_{\gamma(t)}(s) \le |\partial_{ss}\gamma(t,s)|^{-1}$
for all $t,s$. Thus the definitions %of $r_{\gamma(t)}$ (see \eqref{r.def}) and $R_\gamma$ 
entail that 
\[
|x - \gamma(t,\zeta_t(x))| |\partial_{ss}\gamma(t,\zeta_t(x))|\le 1/4 \qquad\mbox{ in }N(\Gamma,t)
\]
 so that $\|\grad \zeta_t\|_{W^{2,\infty}}\lesssim1$ thanks to assumption \eqref{Rgamma} and \eqref{nablazeta} in Lemma \ref{L9cis}. %Here and in the following, we use $C_{\gamma}$ as a generic $\gamma$-dependent constant whose value may change from line to line.
Similar estimates hold for the temporal derivatives of $\zeta_t$ and $\grad\zeta_t$. Indeed, differentiating the defining condition $(x-\gamma(t,\zeta_t(x))\cdot \partial_s\gamma (t,\zeta_t(x))=0$ with respect to $t$ and recalling that $\gamma$ is a solution to the binormal curvature flow, we compute the identity
\[
\partial_t \zeta_t(x) = \frac{(x-\gamma(t,\zeta_t(x))\cdot(\partial_s\gamma(t,\zeta_t(x))\times \partial_s^3\gamma(t,\zeta_t(x)))}{1-(x-\gamma(t,\zeta_t(x))\cdot\partial_s^2\gamma(t,\zeta_t(x))}.
\]
In view of \eqref{Rgamma}, it is thus not difficult to infer $\|\partial_t \zeta_t\|_{W^{1,\infty}} \lesssim 1$.

Following \cite{JerrardSmets15}, we now define
\[
f(r^2) :=\begin{cases}
 \left(1 - (\frac {4r^2}{R_\gamma^2}) \right)^3 &\mbox{ if }r \le \frac 12 R_\gamma\\
 0&\mbox{ if not},
 \end{cases}
\] 
and for  $x\in \R^3$ and $0\le t\le T$, we define
\[
X_{\gamma(t)}(x) := 
\begin{cases}
f\left( \dist^2(x, \Gamma(t)) \right)\, \partial_s\gamma(t, \zeta_t(x)) \quad&\mbox{ if }x\in N(\Gamma,t)%\dist(x, \gamma(t)) < \frac 12 R_\gamma
\\
0&\mbox{ if not}.
\end{cases}
\]
We remark that as a result of \eqref{Rgamma}, the above bounds on $\zeta_t$, and because $\gamma$ is solution of the binormal curvature flow, we find that
\begin{equation}\label{Cgamma}
\sup_{t\in [0,T] }\| X_{\gamma(t)}(\cdot)\|_{W^{3,\infty}} \lesssim 1,
\qquad
\sup_{t\in [0,T] } \| \nabla \times \partial_t X_{\gamma(t)}\|_{L^{\infty}} \lesssim 1\, .
\end{equation}
In addition, the fact that $\gamma$ is a binormal curvature flow endows 
$X_\gamma$ with certain remarkable properties (see \eqref{rprop} above), established in
\cite{JerrardSmets15}, which will be recalled below.

We define
\[
E_\gamma(\Lambda, t) := 
1 -  \int   X_{\gamma(t)} \cdot d\mu_{\Lambda(t)}
= 
 \int_{\Lambda(t)} (1-   X_{\gamma(t)} \cdot \tau_{\Lambda(t)})\, d\calH^1
\]
and
\[
E_\gamma(\mu^\ep, t) := 
1 -  \int  X_{\gamma(t)} \cdot  d\mu^\ep(t) , \qquad \mu^\ep := \curl \ue(t,\cdot).
\]
Then for every $t$, by assumption \eqref{7b} and \eqref{Cgamma},
\begin{equation}\label{macro1}
\left|  E_\gamma(\mu^\ep,t)  - E_\gamma(\Lambda, t) \right|
= 
\left| \int X_{\gamma(t)} \cdot d(\mu^\ep(t) - \mu_{\Lambda(t)}) \right|
\lesssim \ep .
\end{equation}
Also, it follows from assumptions \eqref{7b} and \eqref{initial.ue} that $\| \mu_{\Gamma(0)} - \mu_{\Lambda(0)}\|_F \le 2\ep$, 
so %Likewise, by assumption \eqref{7d},
\begin{equation}
\label{macro3}
|E_{\gamma}(\Lambda,0)| = \left| \int X_{\gamma(0)}\cdot d(\mu_{\Gamma(0)} - \mu_{\Lambda(0)})\right|\lesssim  |\log \eps|^{-1}.
\end{equation}
Moreover, (suppressing for readability the dependence on $t$ of various quantities)
it follows from \eqref{8} that
\[
%\begin{align*}
\frac d{dt}
E_\gamma(\mu^\ep, t) % \left( L - \int \mu \cdot X_\gamma \right)
=
-\int \partial_t X_\gamma \cdot  d\mu^{\eps}\
-  \frac {4\pi}{|\log\ep|} \int \nabla (  \nabla\times X_\gamma) : \ue \otimes \ue\, dx.
\]
From hypothesis \eqref{7b} and \eqref{Cgamma},
\[
\int\partial_t X_\gamma \cdot d\mu^{\eps} = 
\int \partial_t X_\gamma \cdot d\mu_{\Lambda} 
+  \mathcal{O}(\ep),
\]
and Theorem \ref{T1} and assumption \eqref{kstar.uniform} imply that
\[
\frac{4\pi}{|\log\ep|}\int\nabla(\nabla\times X_\gamma) : \ue\otimes \ue \, dx
=
\int_{\Lambda} \nabla(\nabla\times X_\gamma) : (I - \tau_{\Lambda} \otimes \tau_{\Lambda})d\calH^1  \ + \ 
\mathcal{O}\left(|\log\ep|^{-1}\right).
\]
%Here we have used the fact that $t\mapsto |\Lambda(t)|$ is assumed to be constantly $1 = |\Gamma_0|$, and that $\ue$ is a conservative weak solution, by hypothesis.
%Thus $\left|\exc_\ep(\ue(t), \Lambda(t)) \right| =\left|\exc_{\ep}(\ue_0, \Gamma_0) \right| \lesssim \frac 1{|\log\ep|}$
%for every $t$.
Note also that for every vector field $\phi$,
\[
\nabla(\nabla\times\phi ) : I = 
\partial_i(\ep_{jkl}\partial_k \phi^l) \delta_{ij} =
\ep_{jkl}\partial_j \partial_k \phi^l = 0.
\]
Thus
\begin{equation}
\frac d{dt}
E_\gamma(\mu^\ep,t)
=
-\int_{\Lambda} \left[  \partial_t X_\gamma \cdot \tau_\Lambda
-
 \nabla(\nabla\times X_\gamma) :  \tau_\Lambda \otimes \tau_\Lambda \right] \ d\calH^1 
 + \mathcal{O}\left(|\log\ep|^{-1}\right) .
 % \\ +  O(\ep  \| \nabla\times \partial_t X_\gamma\|_{L^{\infty}} ) + O(\frac{\| X_\gamma\|_{W^{1,\infty}}}{|\log\ep|}). 
\label{macro2}\end{equation}
However, it is proved in \cite[Prop.\ 4]{JerrardSmets15} that for {\em any } unit vector
$\xi$,
\[ \left| \partial_t X_\gamma \cdot \xi
-
 \nabla(\nabla\times X_\gamma ): \xi \otimes \xi
 \right|
 \le
 K( 1 - X_\gamma \cdot \xi)
\]
where $K$ depends only on $R_\gamma$ and $\sup_{0\le t \le T}\|\partial_s^3 \gamma(t,\cdot)\|_{L^{\infty}}$.
This is the remarkable property mentioned above, reflecting the 
fact that $\gamma$ is a binormal curvature flow.
As a result,
\[ %\begin{multline}
\left|\int_{\Lambda}  \partial_t X_\gamma \cdot \tau_\Lambda
-
 \nabla(\nabla\times X_\gamma) :  \tau_\Lambda \otimes \tau_\Lambda \ d\calH^1  \right|
 \le
K E_\gamma(\Lambda, t).
\] %\end{multline}
Combining this with \eqref{macro1}, \eqref{macro3} and \eqref{macro2},  we conclude that
\begin{align*}
E_\gamma(\Lambda, t) &\le \int_0^t \frac d{d\tilde t}
E_\gamma(\mu^\ep,\tilde t) \, d\tilde t \  + E_\gamma(\mu^\ep, 0) \ + \ \mathcal{O}(\ep)
\\
&\le
K \int_0^t E_\gamma(\Lambda, \tilde t)\ d\tilde t
+ \mathcal{O} \left(|\log\ep|^{-1}\right)
\end{align*}
for $0<t\le T$. It then follows from Gr\"onwall's inequality that
\begin{equation}\label{EGamma.est}
E_\gamma(\Lambda, t) \lesssim \frac {e^{Kt}} {|\log \ep|}  = \mathcal{O}\left(|\log\ep|^{-1}\right) \qquad \mbox{ for }0\le t \le T. 
\end{equation}

Finally, we show that $E_\gamma(\Lambda, t)$ controls a certain distance between
$\Gamma(t)$ and $\Lambda(t)$. We will suppress the variable $t$,
as it is not relevant here. Let $\lambda: [0,T]\times \R/Z\to \R^3 $ be an arc-length parametrization of $\Lambda$ having the same orientation as $\gamma$.
For $s\in \R/\Z$, let $\delta_\Gamma(s) := \dist(\lambda(s), \Gamma)$. 
We will show that
for all small enough $\ep$ and for every $t\in [0,T]$,
\begin{equation}\label{W11}
\sup_s\inf_{\sigma} |\lambda(s) - \gamma(\sigma) |^2 = \sup_s \delta^2_{\Gamma}(s) \lesssim E_\gamma(\Lambda)  \lesssim |\log\ep|^{-1}
\end{equation}
for all sufficiently small $\ep$. 
The proof of \eqref{W11} is essentially contained in \cite[Lemmas 4-5]{JerrardSmets13},
but we  recall the argument for the convenience of the reader.

First, it follows from the definition of $f$ that if $x =\lambda(s)$ for  any $s\in \R/\Z$, then
\begin{equation}\label{L2b0}
1 - X_\Gamma(x)\cdot \tau_\Lambda(x) \ge 1 - |X_\gamma(x)| = 1 - f(\delta^2_{\Gamma}(s)) \gtrsim \min\left\{1,  \delta_\Gamma^2(s)\right\}.
\end{equation}
Thus thanks to \eqref{EGamma.est}
\begin{equation}
\int_{\R/\Z} \min\{1, \delta_\Gamma^2(s) \}\,  ds  \ \lesssim E_\gamma(\Lambda) \lesssim |\log\ep|^{-1}.
\label{L2bound}\end{equation}
%In particular, if $\ep$ is small enough, then there is a large set of 
We now consider  $s\in \R/\Z$ such that $\delta_\Gamma(s)< R_\gamma$, and hence
$\zeta$ is well-defined near $x = \gamma(s)$. For such $s$,
we will write $\sigma(s) = \zeta(\lambda(s))$, so that $\gamma(\sigma(s)) = P(\lambda(s))$.
Note that if  $\gamma'(\sigma(s))\cdot \lambda'(s) \le 0$, then  $1- f(\delta_\Gamma^2)\gamma'(\sigma(s))\cdot \lambda'(s) \ge 1$,
and if not, then
\[
1- f(\delta_\Gamma^2)\gamma'(\sigma(s))\cdot \lambda'(s) \ge 1- \gamma'(\sigma(s))\cdot \lambda'(s)  = \frac 12|\gamma'(\sigma(s))-\lambda'(s)|^2.
\]
Either way, it follows that 
\begin{equation}
\label{L2low}
1 - X_\gamma(x)\cdot \tau_\Lambda (x) \ge \frac 14|\gamma'(\sigma(s))-\lambda'(s)|^2.
\end{equation}
Next, recalling that $(x - \gamma(\zeta(x))\cdot \gamma'(\zeta(x)) = 0$, we have
\begin{align*}
\frac 12 \frac d{ds}\delta_\Gamma^2(s)
&=
\big(\lambda(s) - \gamma(\sigma(s))\big) \cdot \big( \lambda'(s)  - \gamma'(\sigma(s))\sigma'(s) \big)\\
&=
\big(\lambda(s) - \gamma(\sigma(s))\big) \cdot \big( \lambda'(s) - \gamma'(\sigma(s))\big).
\end{align*}
Since $|a\cdot b|\le \frac 12( |a|^2 + |b|^2)$, we can combine this with \eqref{L2b0} to obtain
\begin{equation}
 \left| \frac d{ds} \delta^2_\Gamma(s) \right|
\lesssim  1 - X_\gamma(\lambda(s)) \cdot \lambda'(s)
\label{almostW11}\end{equation}
as long as $\delta_\Gamma(s) < R_\Gamma$.  If $I\subset \R/\Z$ is any interval on which
$\delta_\Gamma(s) < R_\gamma$, we can integrate \eqref{almostW11} to 
find that
\[
(\sup_I \delta_\Gamma^2) - (\inf_I\delta_\Gamma^2) \lesssim \int_{s\in I}  [1 - X_\gamma(\lambda(s)) \cdot \lambda'(s)] ds\ 
\le
 E_\gamma(\Lambda) \lesssim |\log\ep|^{-1}.
\]
It easily follows from this and \eqref{L2bound} that 
in fact $\delta_\Gamma(s) < R_\gamma$ for all $s$, when $\ep$ is small enough, 
and hence % that 
%\begin{equation}\label{dGLinf}
%\| \delta_\Gamma^2\|_{L^\infty(\R/\Z)} \lesssim |\log\ep|^{-1}.
%\end{equation}
\eqref{W11}  holds.

Moreover, 
we integrate over \eqref{L2low}, use the definition of $E_{\gamma}(\Lambda,t)$ and the estimate \eqref{W11}, and get
\begin{equation}
\label{W12}
\int_{\R/\Z} |\gamma'(\sigma(s)) - \lambda'(s)|^2\, ds \lesssim |\log\eps|^{-1}.
\end{equation}

The two statements of the theorem, estimates \eqref{est1} and \eqref{est2}, now follow from \eqref{W11} and \eqref{W12} via
\begin{equation}
\label{est3}
\sup_{s\in \R/\Z}|\sigma(s) - (s + \bar \sigma)| \lesssim |\log\eps|^{-1}
\end{equation}
with  $\bar \sigma = \bar \sigma(t) = \sigma(0,t)$, because $\|\gamma'\|_{L^{\infty}} + \|\gamma''\|_{L^{\infty}}<\infty$. It thus remains to prove \eqref{est3}.
For {\em a.e.} $s\in \R/\Z$ we find from  \eqref{nablazeta} that
\[
\sigma'(s) =\nabla\zeta(\lambda(s))\cdot \lambda'(s) = \frac{\gamma'(\sigma(s)) \cdot \lambda'(s)}{1- (\lambda(s) - \gamma(\sigma(s))\cdot \gamma''(\sigma(s))}.
\]
Recalling that $|\gamma''(s)| \le r_\gamma(s) \le R_\gamma$, we use \eqref{W11} to deduce 
\[
| 1 - \sigma'(s)| \le   1 - \gamma'(\sigma(s))\cdot\lambda'(s) + \mathcal{O}(|\log\ep|^{-1}) .%\le  1 - X_\Gamma(x)\cdot \tau_\Lambda(x)  + \mathcal{O}(|\log\ep|^{-1}),
\]
Thus $\| 1 - \sigma'\|_{L^1}\lesssim E_\gamma(\Lambda)\lesssim |\log\ep|^{-1}$.
Using the continuous embedding of $W^{1,1}$ into $L^{\infty}$ we find \eqref{est3}. This completes the proof of Theorem \ref{T2}.
\end{proof}

%%%%%%%%%%%%%%%%%%%%%%%%%%%%%%%%%%%%%%%%%%%%%%%%%%%%%%%%%%%%%%%%%%%%%%%%%%%%%%%%%%%%%%%%%%%%%%%%%%%%%%%%
\subsection{Proof of Proposition \ref{Prop.ve}}\label{S:prototype}
%%%%%%%%%%%%%%%%%%%%%%%%%%%%%%%%%%%%%%%%%%%%%%%%%%%%%%%%%%%%%%%%%%%%%%%%%%%%%%%%%%%%%%%%%%%%%%%%%%%%%%%%

In this subsection, we provide the proof of Proposition \ref{Prop.ve}. Notice that the statement in \eqref{ve.0} was established in Lemma \ref{L6} in Section \ref{S:Flatnorm}. The remaining estimates \eqref{ve.1} will be proved in Lemma \ref{L11} and estimates \eqref{ve.2} and \eqref{ve.4} will be proved in Lemma \ref{L8}.

In our computations we will occasionally  encounter error terms of the form $\| g(\kappa^*)\|_{L^1(\R/\Z)}$,
where for example $g(t) = |\log t|^p$ for some $p\ge 1$.
These can always be absorbed into the $\|\kappa^*\|_{L^{1,\infty}}$ term, since
(recalling that $\kappa^*\ge 1$ everywhere) we have
\begin{align}
\int g(\kappa^*(s))ds 
&= \int_1^\infty g'(\alpha) | \{ s \in \R/\Z : \kappa^*(s) \ge \alpha \}|\,  d\alpha
\nonumber\\
&\le \|\kappa^*\|_{L^{1,\infty}}\int_1^{\infty} \frac{ g'(\alpha) }\alpha\,  d\alpha
\ \ \lesssim \ \ \|\kappa^*\|_{L^{1,\infty}}
\label{g.est}\end{align}
by the virtue of the coarea formula \cite[Ch.\ 3.4]{EvansGariepy92}.

We now start to establish  pointwise estimates of $\ve$.  
We begin with rather crude estimates that are valid everywhere;
these will be sufficient for $L^q$ estimates of $\ve$, for $q>2$. 
For $q=2$,
we will later prove sharper estimates in the tube $\mathcal T$.

\begin{lemma}\label{L.pw0}
For every $x\in \R^3$,
\[
|\ve(x)| \lesssim \min \left\{ \frac 1{\dist (x,\Gamma)^2}, \ \frac{ \|\kappa^*\|_{L^{1,\infty}} }{ \dist (x,\Gamma)}, \ \frac 1 \ep \|\kappa^*\|_{L^{1,\infty}} \right\} .
\]
\end{lemma}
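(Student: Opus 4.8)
The plan is to strip the mollifier off the Biot--Savart kernel, reduce to a one-dimensional integral over $\Gamma$, and then control that integral using the ``occupation measure'' bound of Lemma \ref{L.linear}.

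First I would rewrite $\ve$ in a more convenient form: writing $K(z):=z/|z|^3$ for the Biot--Savart kernel and using Fubini together with the change of variables $w=\gamma(s)-y$ and the radial symmetry of $\rho^\eps$, one obtains
\[
\ve(x)=\frac1{4\pi}\int_0^1 (\rho^\eps * K)\big(\gamma(s)-x\big)\times\gamma'(s)\,ds .
\]
The one genuinely analytic input is then the bound $|(\rho^\eps * K)(z)|\lesssim\min\{|z|^{-2},\eps^{-2}\}$ for $z\neq 0$. For $|z|>\eps$ this is in fact the identity $(\rho^\eps * K)(z)=K(z)$: since $K=-\nabla N$ with $N(z)=1/|z|$ harmonic off the origin and $\rho^\eps$ radial, the mean value property gives $\rho^\eps * N=N$ on $\{|z|>\eps\}$, hence $\rho^\eps * K=-\nabla(\rho^\eps * N)=K$ there; and a crude estimate (using $\|\rho^\eps\|_{L^\infty}\lesssim\eps^{-3}$ and $\int_{B_\eps(0)}|w|^{-2}\,dw\lesssim\eps$) gives $|(\rho^\eps * K)(z)|\lesssim\eps^{-2}$ for every $z$. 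Feeding this in yields
\[
|\ve(x)|\lesssim\int_0^1\min\{|\gamma(s)-x|^{-2},\eps^{-2}\}\,ds .
\]

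Next I would fix $x$, set $d:=\dist(x,\Gamma)$, and introduce the occupation measure $\mu(r):=\big|\{s\in\R/\Z:|\gamma(s)-x|<r\}\big|$. Since $r\mapsto\min\{r^{-2},\eps^{-2}\}$ is continuous, nonincreasing, vanishes at infinity and is constant on $(0,\eps]$, the layer-cake formula turns the right-hand side into $\int_\eps^\infty 2r^{-3}\mu(r)\,dr$, and since $\mu$ vanishes on $(0,d)$ this equals $\int_{\max(d,\eps)}^\infty 2r^{-3}\mu(r)\,dr$. The point now is that $\mu$ satisfies \emph{two} a priori bounds that must be played against each other: the trivial $\mu(r)\le|\R/\Z|=1$, and $\mu(r)\le Cr\|\kappa^*\|_{L^{1,\infty}}$ from Lemma \ref{L.linear}. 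The first gives $\int_{\max(d,\eps)}^\infty 2r^{-3}\mu(r)\,dr\le\max(d,\eps)^{-2}\le d^{-2}$; the second gives $\le 2C\|\kappa^*\|_{L^{1,\infty}}\max(d,\eps)^{-1}$, which is at most $2C\|\kappa^*\|_{L^{1,\infty}}\min\{d^{-1},\eps^{-1}\}$. Taking the minimum of the three resulting estimates for $|\ve(x)|$ is precisely the asserted bound.

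I expect the mollified-kernel estimate of the first step to be the only place requiring real care --- namely, making precise that convolution with $\rho^\eps$ leaves the $|z|^{-2}$ decay intact for $|z|\gtrsim\eps$ while capping the singularity at size $\eps^{-2}$. The remaining steps are essentially bookkeeping, the one cute point being that the two bounds on $\mu$ have to be used simultaneously in order to extract all three terms in the minimum.
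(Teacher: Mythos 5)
Your proof is correct and follows essentially the same strategy as the paper: use the mean value property to identify $\rho^\eps * K$ with $K$ away from the origin, bound the mollified kernel by $\min\{|z|^{-2},\eps^{-2}\}$, and then control the one-dimensional integral over $\Gamma$ by means of Lemma~\ref{L.linear}. The only (cosmetic) difference is in the last step: you integrate against the occupation measure $\mu(r)$ via the layer-cake identity $\int_\Gamma \min\{|\gamma(s)-x|^{-2},\eps^{-2}\}\,ds=\int_{\max(d,\eps)}^\infty 2r^{-3}\mu(r)\,dr$, which handles all three terms in the minimum in one stroke, whereas the paper sums a geometric series over dyadic annuli and treats the regimes $\dist(x,\Gamma)\ge\eps$ and $\dist(x,\Gamma)<\eps$ separately; the two are equivalent and both rest on exactly the same ingredients.
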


\begin{proof}
Notice first that the $\ve$ can be written as
\[
\ve(x) = \int_{\Gamma} \K^{\eps}(x-z)\times \tau_{\Gamma}(z)\, d\Ha^{-1},
\]
where $\K^{\eps} = \re\ast\K$, and $\K(z) =- \frac{z}{4\pi |z|^3}$ for $z\in \R^3\setminus\{0\}$ is the gradient of the Newtonian potential in $\R^3$.
The mean value property for harmonic functions implies that $\K^\ep(x) = \K(x)$ if $|x| > \ep$.
If $|x|\le \ep$, then
\[
|\left(\re\ast\K\right)(x)| \lesssim \int \frac{\re(x-y)}{|y|^2}\, dy \lesssim  \frac1{\eps^3} \int_{B_{2\eps}(0)}\frac1{|y|^2}\, dy\lesssim \frac1{\eps^2} \ .
\]
In particular,
\[
|\K^\ep(x)| \lesssim \min \left\{ |x|^{-2}, \ep^{-2}\right\}.
\]
%Thus $|(\rho^\ep \ast \K)(x- z)| \lesssim  \min \left\{ \frac 1{\dist(x,\Gamma)^2} , \frac 1 {\ep^2} \right\}$  for $z\in \mbox{supp}(\mu_\Gamma)$.
It easily follows $|\ve(x)| \lesssim \dist(x,\Gamma)^{-2}$ for every $x$.

We fix $x$ and write $\delta := \dist (x,\Gamma)$. We first assume that $\delta \ge \eps$.
Then by Lemma \ref{L.linear}, because $|\K^{\eps}(x)|\lesssim |x|^{-2}$,
\begin{align*}
|\ve(x)| &\lesssim
\sum_{j=0}^\infty \int_{ \{ s : 2^j\delta \le |\gamma(s)- x| < 2^{j+1}\delta\} } |x- \gamma(s)|^{-2} \, ds
\\
&\lesssim
\| \kappa^*\|_{L^{1,\infty}}
\sum_{j=0}^\infty  (2^j\delta )^{-1} \lesssim 
\frac 1 \delta \| \kappa^*\|_{L^{1,\infty}}.
\end{align*}
Hence $|\ve (x)|\lesssim \delta^{-1}\|\kappa^*\|_{L^{1,\infty}}$ if $\delta\ge \eps$. 
Otherwise, if $\delta <\ep$ then we 
similarly appeal to Lemma \ref{L.linear} to find that
\begin{align*}
|\ve(x)| &\lesssim
\int_{ \{ s : |\gamma(s)- x| < \ep \} } \ep^{-2} \, ds + \sum_{j=0}^\infty \int_{ \{ s : 2^j\ep  \le |\gamma(s)- x| < 2^{j+1}\ep \} }
 |x-\gamma(s)|^{-2} \, ds
\\
&\lesssim
\frac 1 \ep\| \kappa^*\|_{L^{1,\infty}}.
\end{align*}
This proves the lemma.
\end{proof}

We can now establish $L^q$ estimates of $\ve$ for $q>2$.

\begin{lemma}\label{L11}
Estimates \eqref{ve.1} hold.
\end{lemma}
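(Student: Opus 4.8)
The goal is to prove the $L^q$ bound \eqref{ve.1}, namely $\|\ve\|_{L^q} \lesssim \ep^{2/q-1}\|\kappa^*\|_{L^{1,\infty}}$ for all $q \in (2,\infty]$. The case $q = \infty$ is immediate from Lemma \ref{L.pw0}, which gives the pointwise bound $|\ve(x)| \lesssim \ep^{-1}\|\kappa^*\|_{L^{1,\infty}}$ everywhere. So the real work is for $q$ finite, and the natural tool is the ``layer-cake'' or distribution-function representation $\|\ve\|_{L^q}^q = q\int_0^\infty \lambda^{q-1} |\{x : |\ve(x)| > \lambda\}|\, d\lambda$, combined with an estimate on the measure of super-level sets of $|\ve|$.

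\textbf{Key steps.} First I would extract from Lemma \ref{L.pw0} the two relevant bounds: $|\ve(x)| \lesssim \dist(x,\Gamma)^{-2}$ always, and $|\ve(x)| \lesssim \ep^{-1}\|\kappa^*\|_{L^{1,\infty}}$ always. The first bound shows that $|\ve(x)| > \lambda$ forces $\dist(x,\Gamma) < C\lambda^{-1/2}$, so $\{|\ve| > \lambda\}$ is contained in a tubular neighborhood of $\Gamma$ of radius $\sim \lambda^{-1/2}$. The second bound shows this set is empty once $\lambda \gtrsim \ep^{-1}\|\kappa^*\|_{L^{1,\infty}}$; call this threshold $\Lambda_0$. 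The next step is to control $|\{x : \dist(x,\Gamma) < t\}|$ for a curve with $\|\kappa^*\|_{L^{1,\infty}} < \infty$: covering $\Gamma$ by $\sim t^{-1}\cdot(\text{number of pieces})$ balls of radius $t$, or more cleanly integrating the pointwise covering estimate, one gets $|\{x : \dist(x,\Gamma)<t\}| \lesssim t^2 \cdot L/t \cdot t = $ hmm — more carefully, using Lemma \ref{L.linear} and Fubini one obtains $|\{\dist(x,\Gamma)<t\}| \lesssim t^2\,\|\kappa^*\|_{L^{1,\infty}}$ for $t \le 1$ (and the set is bounded, so this persists up to an absolute constant for all $t$). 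Indeed $\int_{B_t(\Gamma)} dx = \int_{\R^3}\I_{\dist(x,\Gamma)<t}\,dx$, and $\dist(x,\Gamma)<t$ iff $|\gamma(s)-x|<t$ for some $s$, so writing $\I_{\dist(x,\Gamma)<t} \le \int_{\R/\Z}\frac{1}{|A_t(x)|}\I_{|\gamma(s)-x|<t}\,ds$ with $|A_t(x)| \gtrsim$ (lower bound) is awkward; cleaner is the direct covering: there is a set $S\subset \R/\Z$ of $\sim 1/t$ points with $\Gamma \subset \bigcup B_t(\gamma(s_i))$ plus handling of high-curvature regions, giving $|B_t(\Gamma)| \lesssim (1/t)\cdot t^3 + (\text{bad set contribution}) \lesssim t^2\|\kappa^*\|_{L^{1,\infty}}$. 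I would cite or adapt the argument behind Lemma \ref{L.linear} for this.

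\textbf{Conclusion of the computation.} Putting these together,
\[
\|\ve\|_{L^q}^q = q\int_0^{\Lambda_0}\lambda^{q-1}|\{|\ve|>\lambda\}|\,d\lambda \lesssim \int_0^{\Lambda_0}\lambda^{q-1}\cdot \lambda^{-1}\|\kappa^*\|_{L^{1,\infty}}\,d\lambda = \frac{\|\kappa^*\|_{L^{1,\infty}}}{q-1}\Lambda_0^{q-1},
\]
using $|\{|\ve|>\lambda\}| \le |\{\dist(\cdot,\Gamma) < C\lambda^{-1/2}\}| \lesssim \lambda^{-1}\|\kappa^*\|_{L^{1,\infty}}$. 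Since $\Lambda_0 \sim \ep^{-1}\|\kappa^*\|_{L^{1,\infty}}$, this gives $\|\ve\|_{L^q}^q \lesssim \ep^{-(q-1)}\|\kappa^*\|_{L^{1,\infty}}^q$, i.e. $\|\ve\|_{L^q} \lesssim \ep^{(1-q)/q}\|\kappa^*\|_{L^{1,\infty}} = \ep^{1/q - 1}\|\kappa^*\|_{L^{1,\infty}}$ — wait, that has the wrong exponent; I should instead interpolate, or note that for $q \in (2,\infty)$ one splits $\int_0^{\Lambda_0} = \int_0^{\ep^{-2}} + \int_{\ep^{-2}}^{\Lambda_0}$: on the first range use the $\lambda^{-2}$-type super-level bound coming from $\dist(x,\Gamma)<\lambda^{-1/2}$ more carefully (the set actually has measure $\lesssim \min\{1, \lambda^{-1}\|\kappa^*\|\}$ but we also know $|\ve|\lesssim \ep^{-2}$ only inside the $\ep$-tube whose volume is $\lesssim \ep^2\|\kappa^*\|$), and the dominant contribution $\int_{\ep^{-2}}^{\Lambda_0}\lambda^{q-1}\cdot \ep^2\|\kappa^*\|\,d\lambda \lesssim \ep^2\|\kappa^*\|\,\Lambda_0^q \sim \ep^{2-q}\|\kappa^*\|^{q+1}$ gives $\|\ve\|_{L^q}\lesssim \ep^{2/q-1}\|\kappa^*\|^{1+1/q} \lesssim \ep^{2/q-1}\|\kappa^*\|^2$, absorbing the extra power of $\|\kappa^*\|$ (which is $\geq 1$) at the cost of the implicit constant — but since \eqref{ve.1} as stated has only one power of $\|\kappa^*\|$, I must be more careful and use that inside the $\ep$-tube the bound is $\ep^{-1}\|\kappa^*\|$ (not $\ep^{-2}$), so $\int_{\ep^{-1}\|\kappa^*\| \text{ scale}}\lambda^{q-1}\,d\lambda \cdot |B_\ep(\Gamma)| \lesssim (\ep^{-1}\|\kappa^*\|)^q \cdot \ep^2\|\kappa^*\|$; this is $\ep^{2-q}\|\kappa^*\|^{q+1}$, still the extra power. \emph{The main obstacle} is thus bookkeeping the powers of $\|\kappa^*\|_{L^{1,\infty}}$ correctly: one must exploit the sharper region-by-region pointwise bounds of Lemma \ref{L.pw0} (the $\dist^{-2}$ bound away from $\Gamma$ versus the $\ep^{-1}\|\kappa^*\|$ bound near it) and the scaling of the tube volume so that the final estimate carries exactly the factor $\ep^{2/q-1}\|\kappa^*\|_{L^{1,\infty}}$ claimed in \eqref{ve.1}, rather than a higher power; I expect the clean way is to integrate $|\ve|^q$ directly against the layer structure $\{2^{-j}\ep \le \dist(x,\Gamma) < 2^{-j+1}\ep\}^c$-type annuli $\{2^k\ep \le \dist(x,\Gamma) < 2^{k+1}\ep\}$, on each of which $|\ve| \lesssim \min\{(2^k\ep)^{-2}, \ep^{-1}\|\kappa^*\|\}$ and the volume is $\lesssim (2^k\ep)^2\|\kappa^*\|$, then sum the geometric series in $k$.
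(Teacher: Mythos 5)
Your framework is the right one and matches the paper's: treat $q=\infty$ via the global bound from Lemma \ref{L.pw0}, and for $q<\infty$ decompose $\R^3$ into dyadic annuli $\{2^j\ep\le\dist(x,\Gamma)<2^{j+1}\ep\}$, multiply the pointwise bound on each annulus by its volume, and sum a geometric series. But there is a genuine gap, and you have correctly sensed it: the argument as you have set it up produces an extra power of $\|\kappa^*\|_{L^{1,\infty}}$, and the source is a mis-calibrated volume estimate, not a bookkeeping subtlety.

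The crucial point you are missing is that the tube volume carries \emph{no} factor of $\|\kappa^*\|_{L^{1,\infty}}$. For a closed curve $\Gamma$ of length $1$, one has $|N_r(\Gamma)|\lesssim r^2+r^3$ by the most naive covering: set $M=\lfloor 1/r\rfloor$ and cover $N_r(\Gamma)\subset\bigcup_{k=0}^{M}B_{2r}(\gamma(kr))$, giving $|N_r(\Gamma)|\lesssim (M+1)r^3\lesssim (1/r+1)r^3$. Lemma \ref{L.linear}, which you propose to invoke, measures a conceptually different quantity --- the \emph{arclength} of $\Gamma$ passing near a fixed point $x$, which can be large (of size $r\|\kappa^*\|_{L^{1,\infty}}$) if the curve loops back on itself --- but this multiplicity is irrelevant to the \emph{volume} of the tube, because overlapping pieces of $\Gamma$ share the same tube. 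Your estimate $|N_t(\Gamma)|\lesssim t^2\|\kappa^*\|_{L^{1,\infty}}$ is true (since $\|\kappa^*\|_{L^{1,\infty}}\ge 1$) but wasteful, and that waste is exactly where the extra power comes from. Your second slip compounds this: in the final annular calculation you use the uniform bound $|\ve|\lesssim\ep^{-1}\|\kappa^*\|_{L^{1,\infty}}$ on every annulus rather than the distance-sensitive bound $|\ve|\lesssim (2^j\ep)^{-1}\|\kappa^*\|_{L^{1,\infty}}$ that Lemma \ref{L.pw0} actually gives you on the $j$th annulus (for $j\ge 1$). Correcting both, on the annulus $H^j$ with $0\le j\le J$ (where $2^J\ep\sim 1$) one has $|\ve|^q\lesssim (2^j\ep)^{-q}\|\kappa^*\|_{L^{1,\infty}}^q$ and $|H^j|\lesssim (2^j\ep)^2$, so $\int_{H^j}|\ve|^q\lesssim (2^j\ep)^{2-q}\|\kappa^*\|_{L^{1,\infty}}^q$, and the sum over $j$ converges (since $q>2$) to $\lesssim \ep^{2-q}\|\kappa^*\|_{L^{1,\infty}}^q$; for $j>J$ use $|\ve|\lesssim(2^j\ep)^{-2}$ and $|H^j|\lesssim(2^j\ep)^3$ to see the far field contributes $\mathcal{O}(1)$. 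Taking $q$-th roots and absorbing the $\mathcal{O}(1)$ using $\|\kappa^*\|_{L^{1,\infty}}\ge 1$ and $\ep<1/2$ gives exactly $\|\ve\|_{L^q}\lesssim\ep^{2/q-1}\|\kappa^*\|_{L^{1,\infty}}$ with a single power, as in \eqref{ve.1}.
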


\begin{proof}
Inequality \eqref{ve.1} in the case $q=\infty$ is
already contained in the previous lemma.
For $q<\infty$, we will write $H^0(\Gamma) := \{x\in \R^3 : \dist(x,\Gamma)<  \ep\}$
and
\[
H^j(\Gamma) := \{x\in \R^3 : 2^{j-1}\ep \le \dist(x,\Gamma)<  2^j\ep\},\qquad \mbox{ for }j\ge 1.
\]
Then clearly
\[
\| \ve\|_{L^q}^q
=
\sum_{j=0}^\infty \int_{H^j(\Gamma)}|\ve|^q\,dx\ .
\]
We set  $N_r(\Gamma) :=  \{x\in \R^3 : \dist(x,\Gamma)< r \}$ and note that for every $r>0$,
\begin{equation}
| N_r(\Gamma) | \lesssim r^2 + r^3.
\label{vol.est}\end{equation}
Indeed, if we let $M = \lfloor 1/r \rfloor$, then
\[
N_r(\Gamma) \subset \bigcup_{k=0}^M B_{2r}(p_k)\qquad\mbox{where } p_k := \gamma(kr).
\]
Thus $|N_r(\Gamma)| \lesssim (M+1) r^3 \lesssim (1/r+1) r^3$, proving \eqref{vol.est}.
We now fix $J$ such that $2^J\ep \le 1 \le 2^{J+1}\ep$, and
we use Lemma \ref{L.pw0}
to estimate
\[
|\ve|\lesssim \min\left\{(2^j\eps)^{-2}, (2^j\eps)^{-1} \|\kappa^*\|_{L^{1,\infty}}\right\}\quad\mbox{in }H^j(\Gamma).
\]
Moreover, with the help of 
\eqref{vol.est} we obtain
\[
|H^j(\Gamma)|\le |N_{2^j\eps}(\Gamma)| \lesssim \begin{cases} (2^j\eps)^2 &\mbox{if }0\le j\le J, \\(2^j\eps)^3&\mbox{if }j>J.
\end{cases}
\]
Thus
\[
\int_{H^j(\Gamma)}|\ve|^q dx \ \lesssim 
\begin{cases}
\ep^{2-q} \|\kappa^*\|_{L^{1,\infty}}^q 2^{-j(q-2)} &\mbox{if }0\le j\le J,\\
\ep^{3-2q} 2^{-j(2q-3)} &\mbox{if }j > J.
\end{cases}
\]
We thus obtain \eqref{ve.1} by summing over $j$. 
\end{proof}

We require sharper estimates for the $L^2$ norms of $\ve$ and associated 
quantities, and for
these we establish a more precise description of $\ve$ in the tube $\mathcal T$.

\begin{lemma}
If $x\in \mathcal T$and dist$(x,\Gamma)\ge \ep$ then
\[
\left|\ve(x)-\frac{1}{2\pi} \frac{(\gamma(\zeta(x))-x)\times\gamma'(\zeta(x))}{\dist(x,\Gamma)^2}\right|\lesssim \frac 1{r(\zeta(x))}  |\log \dist(x,\Gamma)| + \frac {\|\kappa^*\|_{L^{1,\infty}}}{r(\zeta(x))}.
\]
\label{P.veptwise}\end{lemma}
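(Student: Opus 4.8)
The starting point is the representation used in the proof of Lemma~\ref{L.pw0}, namely $\ve(x) = \int_\Gamma \K^\eps(x-z)\times\tau_\Gamma(z)\,d\Ha^1$. Since $\dist(x,\Gamma)\ge\eps$ we have $|x-\gamma(s)|\ge\eps$ for every $s$, so by the mean value property $\K^\eps(x-\gamma(s)) = \K(x-\gamma(s)) = -\tfrac{x-\gamma(s)}{4\pi|x-\gamma(s)|^3}$, and hence
\[
\ve(x) = \frac1{4\pi}\int_{\R/\Z}\frac{(\gamma(s)-x)\times\gamma'(s)}{|\gamma(s)-x|^3}\,ds .
\]
Translating and rotating as in the proof of Lemma~\ref{L9cis}, I may assume $\zeta(x)=0$, $\gamma(0)=0$, $\gamma'(0)=(0,0,1)$, $x = v_\perp\in\R^2\times\{0\}$, and I set $\delta := |v_\perp| = \dist(x,\Gamma)$ and $r_0 := r(0) = r(\zeta(x))$; recall $\delta<\tfrac14 r_0$ because $x\in\mathcal T$, and $r_0\le1$. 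I then split the integral at $|s|=r_0$.

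\emph{Far part $|s|\ge r_0$.} Here $|\gamma(s)-x|\ge\tfrac14 r_0$ (this is exactly \eqref{ghmx} from the proof of Lemma~\ref{L9cis}), so a dyadic decomposition into the sets $\{2^{j}\tfrac{r_0}{4}\le|\gamma(s)-x|<2^{j+1}\tfrac{r_0}4\}$, $j\ge0$, combined with Lemma~\ref{L.linear}, bounds $\big|\int_{|s|\ge r_0}\cdots\,ds\big|$ by $\sum_j (2^j r_0)^{-2}\cdot 2^j r_0\,\|\kappa^*\|_{L^{1,\infty}}\lesssim\|\kappa^*\|_{L^{1,\infty}}/r_0$, which is the second term in the claim.

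\emph{Near part $|s|<r_0$.} I compare with the line $\gamma_0(s):=(0,0,s)$ tangent to $\Gamma$ at $\gamma(0)$. A direct computation gives $(\gamma_0(s)-x)\times\gamma_0'(s) = (\gamma(0)-x)\times\gamma'(0)$, a fixed vector of length $\delta$, and $|\gamma_0(s)-x|^2 = \delta^2+s^2$, so that $\int_\R \frac{(\gamma_0(s)-x)\times\gamma_0'(s)}{|\gamma_0(s)-x|^3}\,ds = \frac{2}{\delta^2}(\gamma(0)-x)\times\gamma'(0)$, precisely $4\pi$ times the asserted leading term, while the model tail $\int_{|s|\ge r_0}$ of this integral is $\lesssim\delta/r_0^2\lesssim 1/r_0$. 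It remains to estimate $\int_{|s|<r_0}$ of the difference of the two integrands. Using the quadratic control near the closest point from the proof of Lemma~\ref{L9cis}, namely $|\gamma(s)-\gamma_0(s)|\lesssim s^2/r_0$ and $|\gamma'(s)-\gamma_0'(s)|\le|s|/r_0$, together with the comparability $|\gamma(s)-x|\sim\sqrt{\delta^2+s^2}$ on $|s|<r_0$ (which needs a short case split on whether $\delta\gtrsim s^2/r_0$), and writing the difference of integrands as $\frac{(\text{numerator diff})}{|\gamma(s)-x|^3} + (\text{model numerator})\big(\tfrac1{|\gamma(s)-x|^3}-\tfrac1{|\gamma_0(s)-x|^3}\big)$, one obtains the pointwise bound $\tfrac1{r_0}\big(\tfrac{s^2}{(\delta^2+s^2)^{3/2}}+\tfrac{|s|}{\delta^2+s^2}+\tfrac{\delta s^2}{(\delta^2+s^2)^2}\big)$; integrating over $|s|<r_0$ (the first two terms give $\lesssim1+|\log\delta|$, the third $\lesssim1$, using $r_0\le1$ and $\delta<\tfrac14$) yields $\lesssim\tfrac1{r_0}|\log\delta|$.

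Adding the three contributions and using $\|\kappa^*\|_{L^{1,\infty}}\ge1$ gives the stated estimate. I expect the main obstacle to be the comparability $|\gamma(s)-x|\sim\sqrt{\delta^2+s^2}$ on the near region, which is what legitimizes the comparison with the tangent line and which the proof of Lemma~\ref{L9cis} only half-provides (it shows $|\gamma(s)-x|>\delta$); the remainder is careful bookkeeping of which error term lands in $|\log\delta|/r_0$ and which in $\|\kappa^*\|_{L^{1,\infty}}/r_0$.
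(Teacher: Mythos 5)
Your proof is correct and follows essentially the same route as the paper: the paper also compares $\ve$ on $|s|<r_0$ with the field of the tangent line $\gamma_0(s)=\gamma(0)+s\gamma'(0)$ and handles $|s|\ge r_0$ by the dyadic argument from Lemma~\ref{L.pw0}; the only cosmetic difference is that the paper estimates the near-field discrepancy $F(s)$ via the mean value theorem while you split the numerator and denominator, and the paper integrates the model field over $[-r_0,r_0]$ directly rather than over $\R$ minus a tail. The comparability $|\gamma(s)-x|\gtrsim\sqrt{\delta^2+s^2}$ that you flag as the likely obstacle is in fact immediate from \eqref{whereisgamma}: $|\gamma(s)-\gamma_0(s)|\le s^2/(2r_0)\le\tfrac12\sqrt{\delta^2+s^2}$ for $|s|\le r_0$, so $|\gamma(s)-x|\ge|\gamma_0(s)-x|-|\gamma(s)-\gamma_0(s)|\ge\tfrac12\sqrt{\delta^2+s^2}$; this is exactly the second inequality in \eqref{need.later}.
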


%%%%%%%%%%%%%%%%%%%%%%%%
%% expansion of velocity filed %%%%%%%%%
%%%%%%%%%%%%%%%%%%%%%%%%

\begin{proof}
Fix $x\in \mathcal T$ with $\dist (x,\Gamma) > \ep$.
We use the same notation as in the proof of Lemma
\ref{L.pw0}, and recall that $\K^\ep(x) = \K(x)$ for $|x|\ge \ep$. In particular,
\[
\ve(x) =%  (\K\star \mu_{\Gamma})(x) =
 \frac1{4\pi}\int_{-1/2}^{1/2} \frac{\gamma(s)-x}{|\gamma(s)-x|^3}\times\gamma'(s)\, ds.
\]
For notational convenience, we assume in the following  discussion that $\zeta(x)=0$ and we set $\delta: =\dist(x,\Gamma)$
and $r_0 := r(0)=r(\zeta(x))$. 
Then defining $\gamma_0(s) = \gamma(0)+ s\gamma'(0)$, we see
% from  Lemma \ref{L9bis}
that 
\begin{eqnarray}
4\pi \ve(x) 
&= &
\int_{-r_0}^{r_0}  \frac{\gamma_0(s)-x}{|\gamma_0(s)-x|^3}\times\gamma_0'(s)\, ds +
\int_{-r_0}^{r_0} F(s) ds
\nonumber\\
&&\mbox{} + 
 \int_{  r_0<|s|<1/2 }  \frac{\gamma(s)-x}{|\gamma(s)-x|^3}\times\gamma'(s)\, ds,
\label{ve.split}
\end{eqnarray}
where
\[
%F(s) := \frac{\gamma(s)-x}{|\gamma(s)-x|^3}\times\gamma'(s)\ -  \frac{\gamma_0(s)-x}{|\gamma_0(s)-x|^3}\times\gamma_0'(s)
F(s) :=f(\gamma(s),\gamma'(s)) - f(\gamma_0(s), \gamma_0'(s)),\qquad
\mbox{ for }f(p,\tau) := 
 \frac{p-x}{|p-x|^3}\times \tau.
% \frac{\gamma(s)-x}{|\gamma(s)-x|^3}\times\gamma'(s)\ -  \frac{\gamma_0(s)-x}{|\gamma_0(s)-x|^3}\times\gamma_0'(s)
\]
The last integral in \eqref{ve.split} can be estimated by exactly the arguments in the proof
of Lemma \ref{L.pw0}, leading to
\[
\left|  \int_{  r_0<|s|<1/2 }  \frac{\gamma(s)-x}{|\gamma(s)-x|^3}\times\gamma'(s)\, ds\right|
\lesssim
\frac 1 {r_0} \|\kappa^*\|_{L^{1,\infty}}.
\]
To evaluate the first integral in \eqref{ve.split},
we observe that since $\left(\gamma(0)-x\right)\cdot\gamma'(0)=0$, 
\[
\int_{-r_0}^{r_0}  \frac{\gamma_0(s)-x}{|\gamma_0(s)-x|^3}\times\gamma_0'(s)\, ds 
=  (\gamma(0)-x)\times \gamma'(0) 
\int_{-r_0}^{r_0} \frac 1{(\delta^2 + s^2)^{3/2}  } ds
\]
with
\[
\int_{-r_0}^{r_0} \frac 1{(\delta^2 + s^2)^{3/2}  } ds
= \frac 2{\delta^2} \left( 1+ \left(\frac \delta {r_0}\right)^2\right)^{-1/2}
= \frac 2{\delta^2} + \mathcal{O}\left(r_0^{-2}\right)
\]
for $\delta \le r_0$. So it only remains to estimate the second integral in \eqref{ve.split}.
%Let us write
%\[
%f(p, x,\tau) := \frac{p-x}{|p-x|^3}\times \tau.
%\]
By the mean value theorem (of calculus), we may write the integrand as
\[
F(s) =
\nabla_{p,\tau} f( \gamma_{\lambda}(s), \gamma_{\lambda}'(s)) \cdot(\gamma(s)-\gamma_0(s), \gamma'(s) - \gamma_0'(s))
\]
where $\gamma_{\lambda}(s) = \lambda\gamma(s) + (1-\lambda)\gamma_0(s)$ for some $\lambda$ between $0$ and $1$. 
Straightforward calculations then imply that
\begin{align*}
|F(s)|
&\lesssim \frac {|\gamma(s) - \gamma_0(s)|}{| \gamma_{\lambda}(s) - x|^3} +
\frac{|\gamma'(s) -\gamma_0'(s)|}{|\gamma_{\lambda}(s) - x|^2}.
 \end{align*}
Since $\gamma_0'(s) = \gamma'(0)$ for all $s$, it follows from the definition of $r_0$ that  
\[
|\gamma'(s) - \gamma_0'(s)|   \le \frac {|s|}{r_0}
\]
for all $|s|\le r_0 $, and hence that
\begin{equation}
|\gamma(s) - \gamma_0(s)| \le \frac {s^2}{2r_0}, \qquad
| \gamma_{\lambda}(s) - x| \ge \frac 12 (\delta^2+ s^2)^{1/2}. 
\label{need.later}
\end{equation}
We may therefore complete the proof by estimating the integral as
\[
\left|\int_{-r_0}^{r_0}F(s)\,ds \right| \lesssim \frac 1 {r_0}\left(\log \left(\frac {r_0}{\delta}\right)+1\right),
\]
because then
\[
\left|\ve(x) - \frac1{2\pi} \frac{(\gamma(\zeta(x))-x)\times\gamma'(\zeta(x))}{\dist(x,\Gamma)^2}\right|\lesssim \frac1{r_0}\left(\log\left(\frac{r_0}{\delta} \right) +1\right) + \frac{\delta}{r_0^2} + \frac{\|\kappa^*\|_{L^{1,\infty}}}{r_0}.
\]
Since $\delta \le r_0$ and $\|\kappa^*\|_{L^{1,\infty}}\ge 1$, the statement follows.
\end{proof}

\begin{lemma}\label{L3}
For every $\ep \in (0,\frac 12)$
\[
\|\ve\|_{L^2}^2  \le  \frac{1}{2\pi} |\log \eps| + \mathcal{O}(\| \kappa^*\|_{L^{1,\infty}}^2) \, .
\]
\end{lemma}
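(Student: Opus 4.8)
The plan is to bound $\int_{\R^3}|\ve|^2\,dx$ by splitting $\R^3$ into three pieces: the ``core'' $U_0:=\{x:\dist(x,\Gamma)<\eps\}$, the outer part of the variable-radius tube $U_1:=\mathcal T\cap\{\dist(x,\Gamma)\ge\eps\}$, and the exterior $U_2:=\{x:\dist(x,\Gamma)\ge\eps\}\setminus\mathcal T$. On $U_0$ I would simply use the crude bound $|\ve|\lesssim\eps^{-1}\|\kappa^*\|_{L^{1,\infty}}$ from Lemma \ref{L.pw0} together with the volume estimate $|U_0|\le|N_\eps(\Gamma)|\lesssim\eps^2$ from \eqref{vol.est}, which gives a contribution of size $\mathcal{O}(\|\kappa^*\|_{L^{1,\infty}}^2)$. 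The entire $|\log\eps|$ then comes from $U_1$, and $U_2$ must be shown to contribute only $\mathcal{O}(\|\kappa^*\|_{L^{1,\infty}}^2)$.

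For the main region $U_1$ I would parametrize the tube by $(s,v)$ with $s\in\R/\Z$, $v\perp\gamma'(s)$, $\eps\le|v|<\tfrac14 r(s)$; by Lemma \ref{L9cis} this is a bijection onto $U_1$ with $\dist(\gamma(s)+v,\Gamma)=|v|$ and $\zeta(\gamma(s)+v)=s$, and the Jacobian of $x=\gamma(s)+v$ equals $1+\mathcal{O}(|v|\kappa^*(s))$. I then write $\ve=L+E$ using Lemma \ref{P.veptwise}, where $L(x):=\tfrac1{2\pi}(\gamma(\zeta(x))-x)\times\gamma'(\zeta(x))/\dist(x,\Gamma)^2$, so that $|L(x)|=\tfrac1{2\pi}|v|^{-1}$ and $|E(x)|\lesssim r(\zeta(x))^{-1}\bigl(|\log|v||+\|\kappa^*\|_{L^{1,\infty}}\bigr)$. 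Expanding $|\ve|^2=|L|^2+2L\cdot E+|E|^2$ and integrating in $(s,v)$: the term $\int_{U_1}|L|^2$ produces $\tfrac1{2\pi}\int_{\R/\Z}\log\!\bigl(\tfrac{r(s)/4}{\eps}\bigr)\,ds=\tfrac1{2\pi}|\log\eps|+\tfrac1{2\pi}\int_{\R/\Z}\log\tfrac{r(s)}4\,ds+\mathcal{O}(1)$, where the middle term is $\le 0$ since $r\le1$ and the $\mathcal{O}(1)$ is the Jacobian correction $\int_{\R/\Z}\kappa^*(s)\tfrac14 r(s)\,ds=\mathcal{O}(1)$; the remaining two terms are estimated \emph{pointwise} (crucially not by Cauchy--Schwarz, which would only give $|\log\eps|^{1/2}\|\kappa^*\|_{L^{1,\infty}}$), using $\int_{|v|<r(s)/4}|E|^2\lesssim 1+|\log\kappa^*(s)|^2+\|\kappa^*\|_{L^{1,\infty}}^2$ and $\int_{|v|<r(s)/4}|L|\,|E|\lesssim 1+|\log\kappa^*(s)|+\|\kappa^*\|_{L^{1,\infty}}$, and then absorbing the $\int_{\R/\Z}|\log\kappa^*|^p\,ds$ terms into $\|\kappa^*\|_{L^{1,\infty}}$ via \eqref{g.est}. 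Altogether $\int_{U_1}|\ve|^2\le\tfrac1{2\pi}|\log\eps|+\mathcal{O}(\|\kappa^*\|_{L^{1,\infty}}^2)$.

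For $U_2$ the key observation is geometric: if $x\notin\mathcal T$ and $\gamma(s_0)$ is a nearest point of $\Gamma$ to $x$ (which exists by compactness and satisfies $(x-\gamma(s_0))\cdot\gamma'(s_0)=0$), then the definition of $\mathcal T$ forces $\dist(x,\Gamma)\ge\tfrac14 r(s_0)$, i.e.\ $\kappa^*(s_0)\ge(4\dist(x,\Gamma))^{-1}$. Decomposing $U_2$ into dyadic shells $\{2^j\eps\le\dist(x,\Gamma)<2^{j+1}\eps\}$, each shell with $2^j\eps\lesssim1$ is then contained in a $(2^{j+1}\eps)$-neighborhood of $\gamma(B_j)$, where $B_j:=\{s:\kappa^*(s)>(2^{j+3}\eps)^{-1}\}$ has $|B_j|\lesssim 2^j\eps\,\|\kappa^*\|_{L^{1,\infty}}$ by \eqref{kstarfinite}, so its volume is $\lesssim(2^j\eps)^3\|\kappa^*\|_{L^{1,\infty}}$; shells with $2^j\eps\gtrsim1$ are handled by the plain volume bound \eqref{vol.est}. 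Combining this with $|\ve(x)|\lesssim\min\{\dist(x,\Gamma)^{-2},\|\kappa^*\|_{L^{1,\infty}}\dist(x,\Gamma)^{-1}\}$ from Lemma \ref{L.pw0} and summing the resulting geometric series (split at $\dist\sim\|\kappa^*\|_{L^{1,\infty}}^{-1}$) gives $\int_{U_2}|\ve|^2=\mathcal{O}(\|\kappa^*\|_{L^{1,\infty}}^2)$. Adding the three contributions yields the claimed bound.

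I expect the main obstacle to be the analysis on $U_1$: one must combine the sharp description of $\ve$ from Lemma \ref{P.veptwise}, the precise form of the tube Jacobian $1+\mathcal{O}(|v|\kappa^*)$, and a pointwise treatment of the cross term $L\cdot E$ in order to recover the constant $\tfrac1{2\pi}$ with only an additive $\mathcal{O}(\|\kappa^*\|_{L^{1,\infty}}^2)$ error rather than something like $|\log\eps|^{1/2}\|\kappa^*\|_{L^{1,\infty}}$. A secondary difficulty is the bookkeeping on $U_2$, where the point is that the portion of space lying near $\Gamma$ but outside the tube of radius $r_\gamma/4$ clusters around the high-curvature part of $\Gamma$, whose arclength is controlled precisely by $\|\kappa^*\|_{L^{1,\infty}}$ through \eqref{kstarfinite}.
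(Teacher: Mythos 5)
Your approach is genuinely different from the paper's. The paper's proof of Lemma~\ref{L3} does not work in physical space at all: it writes $\ve=\nabla\times(\re*\Phi)$ with $\Phi=\G*\mu_\Gamma$, integrates by parts to get $\int|\ve|^2\,dx=\int(\re*\re*\Phi)\cdot d\mu_\Gamma$, and then estimates the resulting \emph{line integral} $\int_{\R/\Z}|(\eta^\ep*\G*\mu_\Gamma)(\gamma(s))|\,ds$ by splitting, for each $s$, into a straight-line model near $\gamma(s)$, a correction, and a far field controlled by Lemma~\ref{L.linear}. This completely avoids having to estimate the volume of any subset of $\R^3$ and never needs Lemma~\ref{P.veptwise}.

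Your decomposition into $U_0$, $U_1$, $U_2$ and the computations on $U_0$ and $U_1$ are fine (the use of Lemma~\ref{P.veptwise}, the $1+\mathcal O(|v|\kappa^*)$ Jacobian, the pointwise rather than Cauchy--Schwarz treatment of the cross term, and \eqref{g.est} to absorb $\log\kappa^*$ terms all check out). The gap is in the $U_2$ estimate. You claim that the $(2^{j+1}\eps)$-neighborhood of $\gamma(B_j)$ has volume $\lesssim(2^j\eps)^2|B_j|\lesssim(2^j\eps)^3\|\kappa^*\|_{L^{1,\infty}}$, i.e.\ that $|N_\rho(\gamma(B))|\lesssim\rho^2|B|$. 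This is \emph{false} for a general measurable $B\subset\R/\Z$: if $B$ consists of $M$ points (or tiny intervals) spaced more than $2\rho$ apart, then $|N_\rho(\gamma(B))|\gtrsim M\rho^3$ while $|B|$ can be arbitrarily small, so the ratio $|N_\rho(\gamma(B))|/|B|$ is unbounded. A covering or Vitali argument only bounds the number $M$ of needed $\rho$-balls by $\min\{1/\rho,\, |B|/\rho + (\text{number of components of $B$ on scale $\rho$})\}$, and nothing in \eqref{kstarfinite} controls that component count. (Indeed $\kappa^*$ need not have bounded variation, and $r=1/\kappa^*$ need not be Lipschitz -- near a corner of angle $\Delta$ one has $r(s)\sim 2|s-s^*|/\Delta$, with derivative $2/\Delta$ arbitrarily large.) If you instead fall back on the unconditional bound $|N_\rho(\Gamma)|\lesssim\rho^2$ of \eqref{vol.est}, the shell contribution becomes $\min\{(2^j\eps)^{-2},\|\kappa^*\|_{L^{1,\infty}}^2\}$ per $j$, whose sum over $j$ with $2^j\eps\le\|\kappa^*\|_{L^{1,\infty}}^{-1}$ is of order $|\log\eps|\,\|\kappa^*\|_{L^{1,\infty}}^2$, not $\|\kappa^*\|_{L^{1,\infty}}^2$. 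So the $U_2$ estimate as written does not close. To repair this you would need a genuine structural statement about $B_j$ (e.g.\ a bound on its $\rho$-covering number), which the hypotheses do not obviously provide; the paper's integration-by-parts step is precisely the device that makes this difficulty disappear.
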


\begin{proof}
%We start the proof by showing that
%\begin{equation}\label{23bis}
%\int |v^{\eps}|^2\, dx = % \int \rho^{\eps}\ast\rho^{\eps}\ast\G\ast\mu_{\Gamma}\cdot d\mu_{\Gamma}
% \int \rho^{\eps}\ast\rho^{\eps}\ast\G\ast\mu_{\Gamma}\cdot d\mu_{\Gamma}
%\end{equation}
Let $\G$ denote  the Newtonian potential $\G(z) = \frac1{4\pi |z|}$,
and define $\Phi := \G\ast \mu_\Gamma$, so that $-\laplace \Phi = \mu_{\Gamma}$. Then we can write $\ve = \re\ast \nabla\times \Phi = \nabla\times \re \ast \Phi$.
It is easy to see, by arguing as in the proof of Lemma \ref{L.pw0}, that $|\Phi(x)| \lesssim |x|^{-1}$ for $|x|$ large, and together with 
the conclusions of Lemma \ref{L.pw0}, this gives sufficient decay to justify integrating by parts as follows:
\begin{multline*}
\int|\ve|^2 dx= \int  \curl( \re \ast\Phi)\cdot \ve\, dx  = \int  \re \ast \Phi \cdot \curl \ve \, dx \\ = \int \re \ast \Phi \cdot \re \ast \mu_\Gamma
\, dx
=
\int \re\ast\re\ast \Phi \cdot d\mu_\Gamma .
\end{multline*}
In the last identity, we have used the radial symmetry of $\re$.
Setting $\eta^\ep := \re \ast\re$, it follows that 
\begin{equation}\label{veL2.1}
\int |\ve|^2\, dx  \le 
\int_{\R/\Z} \left|\eta^\ep \ast \Phi (\gamma(s)) \right | \, ds
=\int_{\R/\Z} \left|(\eta^\ep \ast  \G \ast \mu_\Gamma) (\gamma(s)) \right|  \, ds.
\end{equation}
Below we will repeatedly use the facts that 
\begin{equation}
\eta^{\eps}\ast\G(z) = \G(z) \quad \mbox{ for every $|x|> 2\eps$},\qquad
\qquad
\eta^\eps\ast\G \lesssim \frac  1 \ep \ \  \mbox{  everywhere.}
\label{etaastG}\end{equation}
The first of these follows from the mean value property for harmonic functions, and the second
is easy to verify.

Now consider an arbitrary point in $\R/\Z$, which we take for convenience to be $s=0$,
and let $x:= \gamma(0)$.
Then
\[
\left|(\eta^{\eps}\ast\G\ast\mu_{\Gamma})(x)\right| \le \int_{-1/2}^{1/2} (\eta^{\eps}\ast\G)(x-\gamma(s))\, ds.
\]
%As in the proof of Lemma \ref{P.veptwise}, we write
Let $r_0 := r(0)$. If $r_0<4\ep$, then we use \eqref{etaastG} and  Lemma \ref{L.linear} to compute
\begin{align}
\left|(\eta^{\eps}\ast\G\ast\mu_{\Gamma})(x)\right| 
&\lesssim
\int_{ \{ s : |\gamma(s) - x|\le 2\ep \} } \frac 1 \ep \, ds \label{r0small}
+ \sum_{j= 1}^J
\int_{ \{ s :  2^j\ep \le |\gamma(s) - x| \le 2^{j+1}\ep  \} } \frac 1{|\gamma(s) - x|}\,ds
\nonumber  \\ &
\lesssim  \| \kappa^*\|_{L^{1,\infty}} |\log\ep|,
\end{align}
where $J\lesssim |\log\eps|$ because $|\gamma(s) - x|\le1$ for all $s$. 
For $r_0\ge 4\ep$ we proceed very much as in the proof of Lemma \ref{P.veptwise}, writing
\begin{multline*}
 \int_{-1/2}^{1/2} (\eta^{\eps}\ast\G)(x-\gamma(s))\, ds
=
 \int_{-r_0}^{r_0} (\eta^{\eps}\ast\G)(s \gamma'(0))\, ds \ +\  \int_{-r_0}^{r_0} F(s)\, ds
 \\
 +
 \int_{r_0 <|s|\le \frac 12}  (\eta^{\eps}\ast\G)(x-\gamma(s))  \, ds
\end{multline*}
where
\[
F(s) =  (\eta^{\eps}\ast\G)(x-\gamma(s)) -  (\eta^{\eps}\ast\G)(x - \gamma_0(s)),\qquad \gamma_0(s) = x+s\gamma'(0).
\]
Arguing as in the proof of \eqref{r0small} above, it follows from  \eqref{etaastG} and Lemma  \ref{L.linear}
that 
\[ 
 \int_{r_0 <|s|\le \frac 12}  (\eta^{\eps}\ast\G)(x- \gamma(s))  \, ds 
 % \sum_{j\ge 0}\int_{ \{ s : 2^j r_0 \le |s| \le 2^{j+1}r_0\} } (\eta^{\eps}\ast\G)(\gamma(s)- \gamma(0)) \, ds \\
\ \lesssim  \  |\log r_0| \|\kappa^*\|_{L^{1,\infty}}.
\]
Next, again appealing to \eqref{etaastG}, it is straightforward to  check that
\[
\int_{-r_0}^{r_0} (\eta^{\eps}\ast\G)(s \gamma'(0))\, ds \  = 
 \  \frac {|\log (r_0/\ep)|}{2\pi} +\mathcal{O}(1)
 =  
\frac {|\log \ep|}{2\pi} + \mathcal{O}( |\log r_0|)
\]
where we have used the fact that $r(s)\le \frac 12$ for all $s$ to simplify the error terms.

Notice that in view of the second estimate in \eqref{need.later} (applied both with $\lambda=0$ and $\lambda=1$), we have $F(s) = \G(x-\gamma(s)) - \G(x-\gamma_0(s))$  for $4\ep \le |s| \le r_0$. Hence, there
exists some $ \gamma_{\lambda}(s)$, a
convex combination of $\gamma_0(s)$ and $\gamma(s)$, such that
\[
F(s) \ = \  \nabla \G(x-\gamma_{\lambda}(s))\cdot (\gamma(s) -\gamma_0(s))\ \lesssim \ 
\frac{|\gamma(s)-\gamma_0(s)|}{|x- \gamma_{\lambda}(s)|^2}.
\]
%
%Finally, recall from \eqref{need.later} that for $|s|<r_0$, we have $|\gamma(s) - \gamma_0(s)|\le \frac s2$.
%It follows from this and \eqref{etaastG} that $F(s) = \G(x-\gamma(s)) - \G(x-\gamma_0(s))$  for $4\ep \le |s| \le r_0$.
%For such $s$,   there exists some $\tilde \gamma(s)$, a
%convex combination of $\gamma_0(s)$ and $\gamma(s)$, such that
%\[
%F(s) \ = \  \nabla \G(x-\tilde\gamma(s))\cdot (\gamma(s) -\gamma_0(s))\ \le \ 
%\frac{|\gamma(s)-\gamma_0(s)|}{|x-\tilde \gamma(s)|^2}.
%\]
Again using \eqref{need.later}, we find that $F(s) \lesssim   \frac 1{r_0} $ if $4\ep \le |s| \le r_0$.
Since $F(s) \lesssim \frac 1 \ep$ trivially by \eqref{etaastG} for all $s$, we thus obtain
\[
\int_{-r_0}^{r_0} F(s)\, ds  \lesssim 1.
\]
Combining these, we find that if $r_0\ge 4\ep$, then
\[
|(\eta^\ep \ast \G \ast \mu_\Gamma)(\gamma(s))| \le \frac{|\log\ep|}{2\pi} + \mathcal{O}\left(|\log r(s)|\|\kappa^*\|_{L^{1,\infty}}\right).
\]
Recalling \eqref{veL2.1} and \eqref{r0small}, we can now integrate and recall \eqref{g.est}  to find that
\begin{eqnarray*}
\lefteqn{\int|\ve|^2 \, dx}\\
 &\le& \frac{|\log \eps|}{2\pi} + C\int |\log \kappa^*(s)|\, ds \|\kappa^*\|_{L^{1,\infty}} \\
 & & \hspace{5em} + C
 \|\kappa^*\|_{L^{1,\infty}}|\log \eps| \ \left|\left\{ s\in \R/\Z: \kappa^*(s)>\frac1{4\eps}\right\}\right|\\
 &\le& \frac{|\log\eps|}{2\pi} +C \|\kappa^*\|_{L^{1,\infty}}^2 + C\eps |\log\eps|\|\kappa^*\|^2_{L^{1,\infty}}\ .
\end{eqnarray*}
The statement follows because $\eps|\log\eps| \le 1$.
\end{proof}

The following Lemma completes the proof of Proposition \ref{Prop.ve}.

\begin{lemma}\label{L8}
Estimates \eqref{ve.2} and \eqref{ve.4} hold.
\end{lemma}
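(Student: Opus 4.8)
Below is how I would prove Lemma~\ref{L8}, i.e. estimates \eqref{ve.2} and \eqref{ve.4}.

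\medskip

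\textbf{Strategy.} The plan is to split $\R^3$ into the inner tube $\mathcal{U}:=\mathcal{T}\cap\{\dist(\cdot,\Gamma)\ge\eps\}$, where the pointwise description of $\ve$ from Lemma~\ref{P.veptwise} is available, and its complement. On $\mathcal{U}$ I would use Lemma~\ref{L9cis}: the coarea formula applied to $\zeta:\mathcal{T}\to\R/\Z$ gives $\int_{\mathcal{U}}g\,dx=\int_{\R/\Z}\int_{D_s}g(\gamma(s)+v)\,|\nabla\zeta|^{-1}\,d\calH^2(v)\,ds$, where $D_s=\{v\perp\gamma'(s):\eps\le|v|<\tfrac14 r(s)\}$ is a flat annulus in the normal plane at $\gamma(s)$ (empty unless $r(s)>4\eps$) and, by \eqref{nablazeta} (or \eqref{loclip}), $|\nabla\zeta|^{-1}=1+\mathcal{O}(|v|/r(s))$ there. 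Writing $\delta=\dist(x,\Gamma)$, $\sigma=\zeta(x)$, $\tau=\gamma'(\sigma)$ and $w=(x-\gamma(\sigma))/\delta\in\tau^\perp$ (a unit vector), Lemma~\ref{P.veptwise} says $\bigl|\,|\ve(x)|-\tfrac{1}{2\pi\delta}\bigr|\le\tfrac{C}{r(\sigma)}\bigl(|\log\delta|+\|\kappa^*\|_{L^{1,\infty}}\bigr)$ and that the vector main term is $\mp\tfrac{1}{2\pi\delta}(w\times\tau)$ on $\mathcal{U}$.

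\medskip

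\textbf{Proof of \eqref{ve.2}.} The upper bound $\|\ve\|_{L^2}^2\le\tfrac1{2\pi}|\log\eps|+\mathcal{O}(\|\kappa^*\|_{L^{1,\infty}}^2)$ is Lemma~\ref{L3}; I only need the matching lower bound. From $\bigl|\,|\ve|-\tfrac{1}{2\pi\delta}\bigr|\le b$ with $b=\tfrac{C}{r(\sigma)}(|\log\delta|+\|\kappa^*\|_{L^{1,\infty}})$ one gets $|\ve|^2\ge\tfrac{1}{4\pi^2\delta^2}-\tfrac{C'}{\delta\,r(\sigma)}(|\log\delta|+\|\kappa^*\|_{L^{1,\infty}})$ on $\mathcal{U}$. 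Passing to polar coordinates $d\calH^2(v)=\rho\,d\rho\,d\theta$ on $D_s$, the leading term integrates to $\tfrac1{2\pi}\int_{\R/\Z}\log\tfrac{r(s)}{4\eps}\,\mathbf{1}_{\{r(s)>4\eps\}}\,ds$, and the defect between this and $\tfrac1{2\pi}|\log\eps|$ is $\mathcal{O}(\|\kappa^*\|_{L^{1,\infty}})$ because $\log r(s)=-\log\kappa^*(s)$, $\int_{\R/\Z}|\log\kappa^*|\,ds\lesssim\|\kappa^*\|_{L^{1,\infty}}$ by \eqref{g.est}, $|\{\kappa^*\ge\tfrac1{4\eps}\}|\le 4\eps\|\kappa^*\|_{L^{1,\infty}}$ by \eqref{kstarfinite}, and $\eps|\log\eps|\le1$. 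The $\rho$-integral of the error term $\tfrac{1}{\rho r(s)}(|\log\rho|+\|\kappa^*\|_{L^{1,\infty}})$ and that of the Jacobian correction each give $\mathcal{O}(1+|\log r(s)|+\|\kappa^*\|_{L^{1,\infty}})$ per $s$, hence $\mathcal{O}(\|\kappa^*\|_{L^{1,\infty}})$ after integrating in $s$ and using \eqref{g.est} again. Thus $\|\ve\|_{L^2}^2\ge\int_{\mathcal{U}}|\ve|^2\ge\tfrac1{2\pi}|\log\eps|-\mathcal{O}(\|\kappa^*\|_{L^{1,\infty}}^2)$, and \eqref{ve.2} follows.

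\medskip

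\textbf{Proof of \eqref{ve.4}.} On $\mathcal{U}$ I would expand $\ve\otimes\ve$ about its main part. Replacing $\phi(\gamma(s)+v)$ by $\phi(\gamma(s))$ (at cost $\|\nabla\phi\|_{L^\infty}|v|$), using the angular average $\tfrac1{2\pi}\int_0^{2\pi}(w\times\tau)\otimes(w\times\tau)\,d\theta=\tfrac12(I-\tau\otimes\tau)$ — this tensor is rotationally symmetric in $\tau^\perp$ and has trace $1$ — and $\int_\eps^{r(s)/4}\rho^{-1}\,d\rho=\log\tfrac{r(s)}{4\eps}$, the leading contribution becomes $\tfrac1{4\pi}\int_{\R/\Z}\phi(\gamma(s)):(I-\tau\otimes\tau)\,\log\tfrac{r(s)}{4\eps}\,\mathbf{1}_{\{r(s)>4\eps\}}\,ds$, which by the bookkeeping of the previous paragraph equals $\tfrac{|\log\eps|}{4\pi}\int_\Gamma\phi:(I-\tau\otimes\tau)\,d\calH^1+\mathcal{O}(\|\phi\|_{L^\infty}\|\kappa^*\|_{L^{1,\infty}}^2)$. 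The $\phi$-replacement error, the Jacobian correction, and the cross and quadratic terms of the expansion of $\ve\otimes\ve$ each integrate to $\mathcal{O}(\|\phi\|_{W^{1,\infty}}\|\kappa^*\|_{L^{1,\infty}}^2)$, using $r(s)\le\tfrac12$ and \eqref{g.est} (with $g=(\log)^2$ for the quadratic term). Finally, on the complement $\R^3\setminus\mathcal{U}$ I would \emph{not} estimate $\int\phi:\ve\otimes\ve$ directly; instead $\int_{\mathcal{U}}|\ve|^2\le\|\ve\|_{L^2}^2$ together with the lower bound just proved gives $\int_{\R^3\setminus\mathcal{U}}|\ve|^2=\mathcal{O}(\|\kappa^*\|_{L^{1,\infty}}^2)$, whence that region contributes at most $\|\phi\|_{L^\infty}\,\mathcal{O}(\|\kappa^*\|_{L^{1,\infty}}^2)$. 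Multiplying by $\tfrac{4\pi}{|\log\eps|}$ yields \eqref{ve.4}.

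\medskip

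\textbf{Main obstacle.} The delicate region is the part of a fixed-size neighborhood of $\Gamma$ that lies \emph{outside} the security tube $\mathcal{T}$ — points whose nearest arclength parameter $\sigma$ has $\kappa^*(\sigma)\gtrsim1/\dist(\cdot,\Gamma)$. There the crude bound $|\ve|\lesssim\|\kappa^*\|_{L^{1,\infty}}/\dist(\cdot,\Gamma)$ from Lemma~\ref{L.pw0} combined with a dyadic volume estimate of order $\|\kappa^*\|_{L^{1,\infty}}\,\dist(\cdot,\Gamma)^3$ coming from Lemma~\ref{L.linear} only produces an error $\mathcal{O}(\|\kappa^*\|_{L^{1,\infty}}^3)$, one power too many; the device of deducing the smallness of $\int|\ve|^2$ off $\mathcal{U}$ from the already-established $L^2$ identity, rather than estimating it directly, is exactly what brings this down to $\mathcal{O}(\|\kappa^*\|_{L^{1,\infty}}^2)$. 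A secondary technical point is justifying the tubular Jacobian bounds ($1+\mathcal{O}(|v|/r(s))$, with a uniform two-sided bound) under only the weak regularity \eqref{kstarfinite}, which is supplied by Lemma~\ref{L9cis}.
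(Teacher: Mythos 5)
Your proof is correct and follows essentially the same approach as the paper: both rest on the coarea-formula computation over the tube slices $\zeta^{-1}(s)$ using Lemmas~\ref{L9cis} and~\ref{P.veptwise}, the angular average $\tfrac1{2\pi}\int_0^{2\pi}(w\times\tau)\otimes(w\times\tau)\,d\theta=\tfrac12(I-\tau\otimes\tau)$, the bookkeeping via \eqref{g.est}, and — crucially — the indirect control of $\int_{\R^3\setminus\mathcal{T}}|\ve|^2$ by comparing the tube integral against the global upper bound of Lemma~\ref{L3}. The only difference is organizational: you establish the scalar lower bound $\int_{\mathcal U}|\ve|^2\ge\tfrac1{2\pi}|\log\eps|-\mathcal O(\|\kappa^*\|_{L^{1,\infty}}^2)$ directly first and then treat the tensor version, whereas the paper proves the tensor-valued tube estimate \eqref{Tube.est} once and deduces both \eqref{ve.2} and \eqref{ve.4} from it by specializing $\phi=I$; the two orderings are logically interchangeable.
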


\begin{proof}
We first claim that it suffices to show that
\begin{equation}
\frac{4\pi}{|\log \eps|}\int_{\mathcal T} \phi: v^{\eps}\otimes v^{\eps}\, dx  =  \int_{\Gamma} \phi:(I-\tau\otimes\tau)\, d\Ha^1 
+ \mathcal{O}\left(\frac{\|\kappa^*\|_{L^{1,\infty}}^2 \|\phi\|_{W^{1,\infty}}}{|\log\eps|}\right).
\label{Tube.est}\end{equation}
Indeed, if this holds, then we may take $\phi  = I$ in \eqref{Tube.est} to find that
\[
\int_{\mathcal T} |\ve |^2 \, dx = \frac{ |\log\ep|}{2\pi}
+ \mathcal{O}\left(\|\kappa^*\|_{L^{1,\infty}}^2 \right).
\]
This, together with Lemma \ref{L3}, implies that
\[
\int_{\R^3\setminus \mathcal T} |\ve|^2\, dx  =  \mathcal{O}\left(\|\kappa^*\|_{L^{1,\infty}}^2 \right),
\]
and from this we see that \eqref{Tube.est} implies \eqref{ve.4}.
Similarly, combining the previous two estimates, we directly obtain \eqref{ve.2}.

To prove \eqref{Tube.est}, we first use the coarea formula to rewrite the integral on the left-hand
side as
\begin{equation}\label{coarea.tube}
\int_{\mathcal T} \phi: v^{\eps}\otimes v^{\eps}\, dx  =
\int_{\R/\Z} \left(\int_{\zeta^{-1}(s) } \phi: v^{\eps}\otimes v^{\eps}\ |\nabla \zeta|^{-1} d\calH^2 \right) ds.
\end{equation}
We now consider some $s\in \R/\Z$.
It is convenient to choose coordinates so that $\gamma(s) = 0$ and
$\gamma'(s) =(0,0,1)$.  We will also write $r = (x_1^2 + x_2^2 )^{1/2}$,
and we remark that $r = \dist(x,\Gamma)$ in $\zeta^{-1}(s)$.
In these coordinates,
\[
\zeta^{-1}(s)
=
 \left\{  x : x_3 = 0,  r < \frac14r(s) \right\},  
 \]
and for $x$ in this set, according to Lemma \ref{L.pw0}  %and  \ref{P.veptwise}, % and \ref{L9cis},
\[ 
%\mbox{ for all $0\le r \le r(s)$,\  } \ \  &
|\ve(x)|\lesssim  \| \kappa^*\|_{L^{1,\infty}} \min\left\{ \frac 1\ep, \frac 1 r\right\}.\\
%\mbox{ for  $\ep<r <r(s)$, \   }\ \ &|\ve(x)  - v_*(x)| \lesssim \frac{|\log r|}{r(s)} + \frac r{r(s)},\ \  \ 
%\mbox{ where }v_*: =\frac 1{2\pi} \frac{(-x_2,x_1, 0)}{r^2} \, .
\]
Moreover, it follows from  Lemma \ref{L9cis} and Fubini's Theorem
that for {\em a.e.} $s$, 
\[
\big| \,|\nabla \zeta(x)|^{-1}-1 \big| \le   \frac r { r(s)}  = r \kappa^*(s) \quad\qquad\mbox{for  {\em a.e. } }x\in \zeta^{-1}(s) \ .
\] 
We henceforth restrict our attention to $s$ for which this holds.
We therefore have $|\ve|\lesssim  \eps^{-1} \|\kappa^*\|_{L^{1,\infty}}$ if $r\le \ep$, and otherwise
\[
| \phi(x): v^{\eps}\otimes v^{\eps}\ |\nabla \zeta|^{-1} - \phi(0):v^{\eps}\otimes v^{\eps}| \ \lesssim  \frac 1 r
\|\kappa^*\|^2_{L^{1,\infty}} \left(\frac {\|\phi\|_{L^{\infty}} }{r(s)} + \|\nabla \phi\|_{L^{\infty}}\right).
\]
It follows that 
\begin{multline} \label{537}
\int_{\zeta^{-1}(s) } \phi: v^{\eps}\otimes v^{\eps}\ |\nabla \zeta|^{-1} d\calH^2
=
\int_{ \zeta^{-1}(s)\setminus B_\ep }%\{ x\in \zeta^{-1}(s) : r>\ep \}}  
\phi(0): \ve \otimes \ve \ d\calH^2  \\+ 
\mathcal{O}\left(\|\kappa^*\|^2_{L^{1,\infty}} (\|\phi\|_{L^{\infty}} + r(s) \|\nabla \phi\|_{L^{\infty}}) \right).
\end{multline}
Next, the estimates in Lemma \ref{P.veptwise} imply that for $v_*(x) := \frac 1 {2\pi}\frac{(-x_2,x_1, 0)}{r^2} $, 
we have
\[
\left|
\ve\otimes \ve - v_*\otimes v_*
\right|
\lesssim \left(\frac{ |\log r|}{r(s)} + \frac{\|\kappa^*\|_{L^{1,\infty}}}{r(s)}\right)\frac{\|\kappa^*\|_{L^{1,\infty}}}{r}
\]
for $\ep < r < r(s)$. So  we integrate to find that
\begin{eqnarray}
\lefteqn{\int_{ \zeta^{-1}(s)\setminus B_\ep }
 \ve \otimes \ve \ d\calH^2 }\nonumber \\
 &=&\int_{ \zeta^{-1}(s)\setminus B_\ep }
v_*\otimes v_* \ d\calH^2 
+ \mathcal{O}\left(\|\kappa^*\|_{L^{1,\infty}}|\log\kappa^*(s)|+ \|\kappa^*\|_{L^{1,\infty}}^2\right).\label{vep.vstar}
\end{eqnarray}
For example, one of the two error terms is estimated by
\begin{eqnarray*}
\frac{\|\kappa^*\|_{L^{1,\infty}}}{r(s)}\int_{ \zeta^{-1}(s)\setminus B_\ep } \frac {|\log r|}r d\calH^2 
& \sim & \frac{\|\kappa^*\|_{L^{1,\infty}}}{r(s)} \int_\ep^{r(s)} |\log r|\, dr\\
& \lesssim & \|\kappa^*\|_{L^{1,\infty}}|\log(r(s))|  = \|\kappa^*\|_{L^{1,\infty}}|\log \kappa^*(s)|. %\kappa^*(s) = \frac 1 {r(s)} . 
\end{eqnarray*}
The other terms is similar. Moving on, it is easy to check that
\begin{equation}\label{vstar.est}
\int_{ \zeta^{-1}(s)\setminus B_\ep }
v_*\otimes v_* \ d\calH^2 
%=  \frac 1{4\pi} \log( \frac {r(s)}{\ep}) \  (I - \tau \otimes \tau)
=
 \frac 1{4\pi} \log\left( \frac{r(s)}{\ep}\right) \left(
\begin{array}{ccc}1&0&0\\0&1&0\\0&0&0
\end{array}
\right).
\end{equation}
Indeed, it is clear that any term involving the $3$rd component of $v_*$ must vanish.
Among the remaining terms,  symmetry considerations imply that the off-diagonal
terms 
vanish and that the diagonal terms are equal. Since their sum is
\[
\int_{\zeta^{-1}(s)\setminus B_\ep} |v_*|^2d\calH^2 = 
\frac  1{2\pi} \int_\ep^{r(s)} \frac 1r  \ dr =  \frac1 {2\pi} \log\left(\frac {r(s)}{\ep}\right),
\]
the claim \eqref{vstar.est} follows.

Now by combining \eqref{537}, \eqref{vep.vstar} and \eqref{vstar.est}, 
and recalling that $r(s) = \frac 1{\kappa^*(s)} \le \frac 12$ for all $s$,
we find that
\begin{multline*}
\left| \int_{\zeta^{-1}(s) } \phi: v^{\eps}\otimes v^{\eps}\ |\nabla \zeta|^{-1} d\calH^2
 \ - \ \frac {|\log \ep|} {4\pi}  \phi(\gamma(s)):(I - \gamma'(s)\otimes \gamma'(s))\right|
\\
\lesssim
%|\log \kappa^*(s)|^2 \|\phi\|_{L^{\infty}}
%+
\left(|\log \kappa^*(s)|\|\kappa^*\|_{L^{1,\infty}}+ \|\kappa^*\|^2_{L^{1,\infty}} \right) \| \phi\|_{W^{1,\infty}}. 
\end{multline*}
We deduce \eqref{Tube.est}, and hence complete the proof of the lemma, by substituting this
into \eqref{coarea.tube}, integrating and using \eqref{g.est} to simplify some of the error terms.
\end{proof}

\section*{Appendix}

In this appendix, we prove the proof of the interpolation inequality
\[
\|f\|_{L^p} \lesssim \|f\|_{L^{1,\infty}}^{\frac{2-p}p} \|f\|_{L^2}^{\frac{2p-2}p}.
\]
Recall that $\|f\|_{L^p}^p  = p\int_0^{\infty} \alpha^{p-1} d_f(\alpha)\, d\alpha$, where $d_f(\alpha) = \left|\left\{x\in\R^N:\: |f(x)|>\alpha\right\}\right|$. Then, letting $\ell>0 $ be arbitrary, we write
\[
\int |f|^p\, dx = p \int_0^{\ell} \alpha^{p-1} d_f(\alpha)\, d\alpha + p \int_{\ell}^\infty \alpha^{p-1}d_f(\alpha)\, d\alpha.
\]
Clearly
\[
\int_0^{\ell} \alpha^{p-1} d_f(\alpha)\,d\alpha \le \ell^{p-1} \|f\|_{L^{1,\infty}},\quad\mbox{and}\quad \int_{\ell}^{\infty} \alpha^{p-1} d_f(\alpha)\, d\alpha \le \frac{\ell^{p-2}}2 \|f\|_{L^2}^2,
\]
where we have used that $1<p<2$. Hence,
\[
\|f\|_{L^p}^p \lesssim \ell^{p-1} \|f\|_{L^{1,\infty}} + \ell^{p-2} \|f\|_{L^2}^2.
\]
Optimizing in $\ell$ yields the desired result.

\section*{Acknowledgment}
The work of both authors  was partially supported by the National Science and
Engineering Research Council of Canada under operating grant 261955.

\bibliography{euler}
\bibliographystyle{acm}

\end{document}